\newcommand{\bbC}{\mathbb C}
\newcommand{\bbH}{\mathbb H}
\newcommand{\bbR}{\mathbb R}
\newcommand\floor[1]{\lfloor#1\rfloor}
\newcommand{\fg}{\mathfrak g}
\newcommand{\fk}{\mathfrak k}
\newcommand{\fh}{\mathfrak h}
\newtheorem{Thm}{Theorem}[section]
\newtheorem{Prop}[Thm]{Proposition}
\newtheorem{Lem}[Thm]{Lemma}
\theoremstyle{definition}
\newtheorem{Rem}{Remark}
\theoremstyle{remark}
\theoremstyle{definition}
\numberwithin{equation}{section}
\title{Logarithm laws for unipotent flows on hyperbolic manifolds}
\author{Shucheng Yu}
\address{Department of Mathematics, Boston College, Chestnut Hill MA 02467-3806, USA}
\email{shucheng.yu@bc.edu}
\begin{document}
\begin{abstract}
We prove logarithm laws for unipotent flows on non-compact finite-volume hyperbolic manifolds. Our method depends on the estimate of norms of certain incomplete Eisenstein series.
\end{abstract}

\maketitle
\section{Introduction}
Let $G$ denote a connected real semisimple Lie group with no compact factors and $\Gamma\subset G$ be a non-uniform irreducible lattice, that is, $\Gamma$ is discrete, the homogeneous space $\Gamma\backslash G$ is non-compact and has finite co-volume with respect to the Haar measure of $G$. Let $\sigma$ denote the right $G$-invariant probability measure on $\Gamma\backslash G$. Any unbounded one-parameter subgroup $\{g_t\}_{t\in \bbR}\subset G$ acts on $\Gamma\backslash G$ by right multiplication. By Moore's Ergodicity Theorem this action is ergodic with respect to $\sigma$, hence for $\sigma$-a.e. $x\in\Gamma\backslash G$ the orbit $\{xg_t\}$ is dense. In particular, these orbits will make excursions into the cusp(s) of $\Gamma\backslash G$. A natural question to ask is at what rate these cusp excursions occur.

We first fix some notations throughout this paper. We write $A\asymp B$ if there is some constant $c>1$ such that $c^{-1}A\leq B\leq cA.$ And we write $A\lesssim B$ or $A= O\left(B\right)$ to indicate that $A\leq cB$ for some positive constant $c$. We will use subscripts to indicate the dependence of the constant on some parameters.

The above question can be restated as a shrinking target problem. Let $K$ be a maximal compact subgroup of $G$, $\Gamma\backslash G$ has a naturally defined distance function, $\textrm{dist}$, induced from a left $G$-invariant and bi-$K$-invariant Riemannian metric on $G$. For a fixed $o\in \Gamma\backslash G$ we define the cusp neighborhoods by
$$B_r:=\left\{x\in \Gamma\backslash G\ |\ \textrm{dist}\left(o,x\right)> r\right\}$$
for any $r>0$. By \cite{KM} there exists a constant $\varkappa>0$ such that $\sigma\left(B_r\right)\asymp e^{-\varkappa r}$. For $\left\{r_{\ell}\right\}$ a sequence of positive real numbers with $r_{\ell}\to\infty$, consider the family of shrinking cusp neighborhoods $\left\{B_{r_{\ell}}\right\}$, we define a corresponding sequence of random variables on $\Gamma\backslash G$ by
\[X_{\ell}\left(x\right):= \left\{
  \begin{array}{lr}
    1 & \textrm{if}  \ \ xg_{\ell}\in B_{r_{\ell}}\\
    0 &  \textrm{otherwise.}
  \end{array}
\right.
\]
Note that $X_{\ell}\left(x\right)=1$ if and only if the $\ell$-th orbit of $x$ makes excursion into the $\ell$-th cusp neighborhood $B_{r_{\ell}}$. In this setting, one can vary the sequence $\{r_{\ell}\}$ to enlarge or shrink the family of cusp neighborhoods $\{B_{r_{\ell}}\}$, and then ask whether the events $X_{\ell}\left(x\right)=1$ happen finitely or infinitely many times for a generic point $x$. We note that the first half of Borel-Cantelli lemma implies that if $\sum_{\ell=1}^{\infty}\sigma\left(B_{r_{\ell}}\right)<\infty$, then for $\sigma$-a.e. $x\in \Gamma\backslash G$ the events $X_{\ell}\left(x\right)=1$ happen for finitely many $\ell$. Thus $\limsup_{\ell\to\infty}\frac{\textrm{dist}\left(o,xg_{\ell}\right)}{r_{\ell}}\leq 1$ for $\sigma$-a.e. $x\in \Gamma\backslash G$. In particular, for any small positive number $\epsilon$, choosing $r_{\ell}=\frac{\left(1+\epsilon\right)\log\left(\ell\right)}{\varkappa}$, a standard continuity argument implies that $\limsup_{t\to\infty}\frac{\textrm{dist}\left(o,xg_t\right)}{\log t}\leq \frac{1+\epsilon}{\varkappa}$ for $\sigma$-a.e. $x\in \Gamma\backslash G$. Letting $\epsilon\to0$ we get $\limsup_{t\to\infty}\frac{\textrm{dist}\left(o, xg_t\right)}{\log \left(t\right)}\leq \frac{1}{\varkappa}$ for $\sigma$-a.e. $x\in\Gamma\backslash G$. If the bound is sharp, $$\limsup_{t\to\infty}\frac{\textrm{dist}\left(o, xg_t\right)}{\log \left(t\right)}= \frac{1}{\varkappa},$$
for $\sigma$-a.e $x\in\Gamma\backslash G$, we say that the flow $\{g_t\}_{t\in\bbR}$ satisfies the \textit{logarithm law}. Following \cite{JA}, we say a sequence of cusp neighborhoods $\{B_{r_{\ell}}\}$ is \textit{Borel-Cantelli} for $\{g_t\}$ if $\sum_{\ell=1}^{\infty}\sigma\left(B_{r_{\ell}}\right)=\infty$ and for $\sigma$-a.e. $x\in \Gamma\backslash G$ $X_{\ell}(x)=1$ for infinitely many $\ell$. Note that $\left\{g_t\right\}$ satisfying logarithm law is equivalent to the statement that for any $\epsilon>0$, any sequence of cusp neighborhoods $\{B_{r_{\ell}}\}$ with $\sigma\left(B_{r_{\ell}}\right)\asymp \frac{1}{\ell^{1-\epsilon}}$ is Borel-Cantelli for $\{g_t\}$.

The problem of logarithm laws in the context of homogeneous space was first studied by Sullivian \cite{Su} where he proved logarithm laws for geodesic flows on non-compact finite-volume hyperbolic manifolds. The general case of one-parameter diagonalizable flows on non-compact finite-volume homogeneous spaces was proved by Kleinbock and Margulis \cite{KM}. The main ingredient of their proof is the exponential decay of matrix coefficients of diagonalizable flows, from which they deduced the quasi-independence of the above events $X_{\ell}$. Then the logarithm law follows from a quantitative Borel-Cantelli lemma.

The problem of logarithm laws for unipotent flows is more subtle since the matrix coefficients of unipotent flows only decay polynomially. Nevertheless, using a random analogy of Minkowski's theorem Athreya and Margulis \cite{AM} proved logarithm laws for one-parameter unipotent subgroups on the space of lattices $\mathcal{X}_d:=SL_d\left(\mathbb{Z}\right)\backslash SL_d\left(\bbR\right)$. Later Kelmer and Mohanmmadi \cite{DA} proved the case when $G$ is a product of copies of $SL_2\left(\bbR\right)$ and $SL_2\left(\bbC\right)$ and $\Gamma$ is any irreducible non-uniform lattice. We note that in the above two cases, their methods are closely related and both rely on the estimate of $L^2$-norms of certain transform functions.

In \cite{JA1}, Athreya studied the cusp excursion of the full horospherical group with respect to some one-parameter diagonalizable subgroup on $\mathcal{X}_d$. Surprisingly, he was able to relate the cusp excursion rates for diagonalizable and horospherical actions and certain Diophantine properties for every $x\in \mathcal{X}_d$. In particular, his result implies logarithm laws for unipotent flows on $\mathcal{X}_2$. The most general result known for unipotent flows was obtained by Athreya and Margulis \cite{AM2}. More precisely, for $G$ a semisimple Lie group without compact factors, $\Gamma\subset G$ an irreducible non-uniform lattice in $G$ and $\{g_t\}_{t\in\bbR}$ a one-parameter unipotent subgroup in $G$, they proved that for any $o\in \Gamma\backslash G$ and $\sigma\textrm{-a.e.}$ $x\in \Gamma\backslash G$, there exists $0<\beta\leq 1$ such that $\limsup_{t\to\infty}\frac{\textrm{dist}\left(o,xg_t\right)}{\log\left(t\right)}=\frac{\beta}{\varkappa}$. Moreover, they asked whether such $\beta$ can always attain $1$, which is the upper bound coming from the first half of Borel-Cantelli lemma.

In this paper, we generalize the approach in \cite{AM} and \cite{DA} to give a positive answer to this question when $\Gamma\backslash G$ is the frame bundle of hyperbolic manifolds. Before stating our main result, we first fix some notations. Let $\bbH^{n+1}$ be the $\left(n+1\right)$-dimensional real hyperbolic space with $n\geq 2$ and $\textrm{Iso}^{+}\left(\bbH^{n+1}\right)$ denote the orientation preserving isometry group of $\bbH^{n+1}$. Fix a maximal compact subgroup $K$ and identify $G/K$ with $\mathbb{H}^{n+1}$.

\begin{Thm}
\label{main thm}
Let $G=\text{Iso}^{+}\left(\bbH^{n+1}\right)$ with $n\geq 2$, $\Gamma\subset G$ a non-uniform lattice and $\{g_t\}_{t\in\bbR}$ a one-parameter unipotent subgroup of $G$. Let $\textrm{dist}\left(\cdot,\cdot\right)$  denote the distance function obtained from hyperbolic metric on the hyperbolic manifold $\Gamma\backslash \mathbb{H}^{n+1}$. Then for any fixed $o\in \Gamma\backslash G$,
\begin{equation}
\label{l. laws} \limsup\limits_{t\to \infty}\frac{\textrm{dist}\left(o,xg_t\right)}{\log t}= \frac{1}{n},
\end{equation}
for $\sigma$-a.e. $x\in\Gamma\backslash G$.\footnotemark
\end{Thm}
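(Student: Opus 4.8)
The key dichotomy is that $\limsup_{t\to\infty}\frac{\textrm{dist}(o,xg_t)}{\log t}\leq \frac{1}{n}$ already follows from the first Borel--Cantelli argument in the introduction (here $\varkappa=n$ for real hyperbolic $(n+1)$-space, since the cusps are rank-one and $\sigma(B_r)\asymp e^{-nr}$), so the entire content of the theorem is the matching lower bound. By the equivalent formulation recalled above, it suffices to show that for every $\epsilon>0$ and every sequence $\{r_\ell\}$ with $\sigma(B_{r_\ell})\asymp \ell^{-(1-\epsilon)}$, the family $\{B_{r_\ell}\}$ is Borel--Cantelli for $\{g_t\}$; equivalently, with $r_\ell = \frac{(1-\epsilon)\log\ell}{n}$ (up to $O(1)$), one has $xg_\ell\in B_{r_\ell}$ for infinitely many $\ell$ for $\sigma$-a.e.\ $x$. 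I would deduce this from a quantitative (second) Borel--Cantelli / Paley--Zygmund type lemma, for which the crucial inputs are (i) $\sum_\ell \sigma(B_{r_\ell})=\infty$, which is automatic, and (ii) a quasi-independence estimate of the form $\sum_{k,\ell\leq N}\sigma(\{x:xg_k\in B_{r_k},\ xg_\ell\in B_{r_\ell}\}) \ll \left(\sum_{\ell\leq N}\sigma(B_{r_\ell})\right)^2 + (\text{lower order})$.

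To get the correlation bound (ii), the plan is to replace the sharp indicator $\mathbbm{1}_{B_r}$ by a smooth approximation built from an \emph{incomplete Eisenstein series}: for a cusp of $\Gamma\backslash G$ with associated unipotent horospherical parameter, and $\psi$ a smooth bump on $\bbR_{>0}$, one forms $\Theta_\psi(x)=\sum_{\gamma}\psi(\text{(height function)}(\gamma x))$, which on the relevant cusp region agrees up to controlled error with a multiple of $\mathbbm{1}_{B_r}$ when $\psi$ is localized near $e^{\varkappa r}$. The matrix coefficient / equidistribution input I would use is a bound on $\left\langle g_t\cdot\Theta_{\psi_1},\Theta_{\psi_2}\right\rangle_{L^2(\Gamma\backslash G)}$: writing $\Theta_{\psi_i} = c_i + \Theta_{\psi_i}^0$ with $\Theta_{\psi_i}^0$ in the orthogonal complement of the constants, the main term $c_1c_2$ reproduces $\left(\sigma(B_{r_k})\sigma(B_{r_\ell})\right)$-type contributions (this is the ``independent'' part), and the error term $\left\langle g_t\cdot\Theta_{\psi_1}^0,\Theta_{\psi_2}^0\right\rangle$ must be shown to be small enough after summing over $k,\ell\leq N$. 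This is exactly the point where the paper's promised \emph{estimate of norms of certain incomplete Eisenstein series} enters: one needs an $L^2$ (or $L^p$) bound on $\Theta_\psi^0$ in terms of the $L^1$-mass $\sigma(B_r)$ — something like $\|\Theta_\psi^0\|_2 \ll_\epsilon \sigma(B_r)^{1/2-\epsilon}$ — so that even the trivial bound $|\langle g_t\Theta_{\psi_1}^0,\Theta_{\psi_2}^0\rangle|\leq \|\Theta_{\psi_1}^0\|_2\|\Theta_{\psi_2}^0\|_2$, combined with the off-diagonal decay of matrix coefficients for unipotent flows (polynomial in $|t_k-t_\ell|$, available since the tempered/near-tempered spectrum of $\textrm{Iso}^+(\bbH^{n+1})$ is understood and $n\geq2$ keeps the complementary series under control), yields a convergent-enough double sum.

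Concretely, the steps in order: (1) reduce to the lower bound and to the Borel--Cantelli statement for $r_\ell=\tfrac{(1-\epsilon)\log\ell}{n}$; (2) fix a cusp, set up the Iwasawa-type coordinates and the height function, and record $\sigma(B_r)\asymp e^{-nr}$ with $\varkappa=n$; (3) construct $\Theta_{\psi_\ell}$ approximating $\mathbbm{1}_{B_{r_\ell}}$ from above and below, with $\sigma(\Theta_{\psi_\ell})\asymp \sigma(B_{r_\ell})$ and the norm bound $\|\Theta_{\psi_\ell}^0\|_2\ll_\epsilon\sigma(B_{r_\ell})^{1/2-\epsilon}$ (this is the technical heart, using Rankin--Selberg unfolding / Maass--Selberg to bound the Eisenstein contribution, and is where I expect the real work); (4) combine with decay of matrix coefficients of $\{g_t\}$ acting on $L^2_0(\Gamma\backslash G)$ to prove the quasi-independence estimate (ii), being careful that the gaps $\ell-k$ are large for most pairs so the polynomial decay beats the number of pairs; (5) invoke the quantitative Borel--Cantelli lemma (as in \cite{AM}, \cite{DA}) to conclude $\sum_\ell X_\ell(x)=\infty$ a.e., hence $\limsup \geq \tfrac{1-\epsilon}{n}$, and let $\epsilon\to0$. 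The main obstacle is step (3)–(4): getting the norm of the incomplete Eisenstein series small enough (beating $\sigma(B_r)^{1/2}$ by an $\epsilon$-power, or at least by enough of a power of $\log$ after a dyadic decomposition) and then managing the near-diagonal terms $|k-\ell|$ small, where the matrix-coefficient decay is weak and one must instead use a direct geometric/measure estimate on $\sigma(B_{r_k}\cap B_{r_\ell}g_{\ell-k}^{-1})$.
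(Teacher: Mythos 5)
Your upper bound is exactly the paper's, but your lower-bound strategy (quantitative second Borel--Cantelli via quasi-independence, i.e.\ step (4)) is the Kleinbock--Margulis route, and it is precisely the step that fails for unipotent flows; the paper is structured to avoid it entirely. The difficulty you flag at the end is not a technicality: with $r_\ell=\frac{(1-\epsilon)}{n}\log\ell$ the main term $\bigl(\sum_{\ell\leq N}\sigma(B_{r_\ell})\bigr)^2\asymp N^{2\epsilon}$ grows extremely slowly, while the matrix coefficients of a one-parameter unipotent subgroup decay only polynomially, say like $|t|^{-\delta}$ with some fixed $\delta$ controlled by the spectral gap, and moreover such bounds require Sobolev norms of the test functions, which blow up for sharply localized approximations to $\mathbbm{1}_{B_r}$. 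Even granting an $L^2$-to-$L^2$ bound, the off-diagonal sum $\sum_{k<\ell\leq N}(\ell-k)^{-\delta}\,\sigma(B_{r_k})^{1/2}\sigma(B_{r_\ell})^{1/2}$ is of order $N^{1-\delta+\epsilon}$ (up to logarithms), which overwhelms $N^{2\epsilon}$ unless $\delta\geq 1$; no such decay rate is available here, and the near-diagonal pairs are worse still. Your proposed norm bound $\|\Theta_\psi^0\|_2\ll\sigma(B_r)^{1/2-\epsilon}$ also cannot hold in general: the correct bound (Theorem \ref{thm 2}) is $\|\Theta_f\|_2^2\lesssim \|f\|_1^2+\|f\|_2^2$, giving only $\|\Theta_f\|_2\lesssim\sigma(B_r)^{1/2}$, and even that is proved only for a restricted class $\mathscr{A}_\lambda$ because of possible exceptional poles of the Eisenstein series.

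The idea you are missing is how the paper substitutes ergodicity for quasi-independence. The set $\mathcal{A}_{\epsilon}$ of points with $\limsup\geq\frac{1-\epsilon}{n}$ is invariant under $\{g_t\}$, so by Moore ergodicity it suffices to show it has \emph{positive} measure, not full measure; hence one never needs pair correlations between different times. Concretely, the paper builds sets $\mathfrak{D}_m\subset Q\backslash G$ as unions over $m\leq\ell\leq 2m$ of the flow-translates $Q\backslash QA(r_\ell)B^-g_{-\ell}$ of cusp neighborhoods, so that every point of $Y_{\mathfrak{D}_m}$ makes a deep cusp excursion at some single integer time $\ell\geq m$ (Lemma \ref{lem 1}). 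A uniform lower bound $\sigma(Y_{\mathfrak{D}_m})\geq\kappa_\Gamma$ then follows from one application of Cauchy--Schwarz, $\bigl(\int\Theta_{\mathbbm{1}_{\mathfrak{D}_m'}}\bigr)^2\leq\sigma(Y_{\mathfrak{D}_m'})\,\|\Theta_{\mathbbm{1}_{\mathfrak{D}_m'}}\|_2^2$, combined with the $L^2$ bound of Theorem \ref{thm 2} applied to a smooth majorant in $\mathscr{A}_\lambda$; the unipotent dynamics enters only through the geometry of the sets $\mathfrak{D}_m$ (via the Bruhat-type coordinates $Qa_tu_{\mathbf{x}}^-$), not through any mixing estimate. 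The limsup set $\mathcal{B}_\epsilon=\cap_l\cup_{m\geq l}Y_{\mathfrak{D}_m}$ then has positive measure and lies in $\mathcal{A}_\epsilon$, and ergodicity finishes the proof. Without replacing your step (4) by an argument of this kind, the proposal does not close.
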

\footnotetext{Here by abuse of notation, for $o,xg_t\in \Gamma\backslash G$, we write $\textrm{dist}\left(o,xg_t\right)$ for the distance between their projections to $\Gamma\backslash \mathbb{H}^{n+1}$.}

We give a brief outline of our proof here. We first note that if $\left(\ref{l. laws}\right)$ holds for $\Gamma$, then it also holds for any $\Gamma'$ conjugate to $\Gamma$ (see section \ref{loglaw}). Hence after suitable conjugation we can assume $\Gamma$ has a cusp at $\infty$ (see section \ref{grt} for the definition of cusps).

The upper bound, as mentioned above, follows from the first half of the Borel-Cantelli lemma. For the lower bound, we first note that it suffices to show that the set
$$\mathcal{A}_{\epsilon}:=\left\{x\in \Gamma\backslash G\ |\ \limsup\limits_{t\to \infty}\frac{\textrm{dist}\left(o, xg_t\right)}{\log t}\geq \frac{1-\epsilon}{n}\right\}$$
has positive measure for any $\epsilon>0$. This is because $\mathcal{A}_{\epsilon}$ is invariant under the action of $\{g_t\}_{t\in\bbR}$, hence by ergodicity, if $\mathcal{A}_{\epsilon}$ is of positive measure it must have full measure. Then the theorem follows by letting $\epsilon$ approach zero.

Next, in order to show that $\mathcal{A}_{\epsilon}$ has positive measure, we construct a subset $\mathcal{B}_{\epsilon}\subset \mathcal{A}_{\epsilon}$ which we show has positive measure. To describe our construction, we need some additional notations. Fix an Iwasawa decomposition $G=NAK$ with the maximal unipotent subgroup $N$ fixing $\infty$. Let $M\subset K$ be the centralizer of $A$ in $K$, $P=NAM$ the stabilizer of $\infty$ in $G$ and $\Gamma_{\infty}=\Gamma\cap P$ the stabilizer of $\infty$ in $\Gamma$. Let $Q=NM$ be the maximal subgroup of $P$ containing $\Gamma_{\infty}$ such that $\Gamma_{\infty}\backslash Q$ is relatively compact. See section \ref{v.group} for explicit descriptions of these groups. For any $\mathfrak{D}\subset Q\backslash G$ we let
$$Y_{\mathfrak{D}}=\left\{ \Gamma g\in \Gamma\backslash G\ \big|\ Q\gamma g\in \mathfrak{D}\ \text{for some}\ \gamma\in \Gamma\right\}.$$
In section \ref{tl}, for any $\epsilon>0$ we construct a sequence of sets $\mathfrak{D}_{m}\subset Q\backslash G$ explicitly by taking unions of certain translations of cusp neighborhoods and we show that $\left\{\sigma\left(Y_{\mathfrak{D}_m}\right)\right\}_{m\in\mathbb{N}}$ is uniformly bounded from below and each $Y_{\mathfrak{D}_m}$ satisfies
\begin{equation}
\label{intro}\forall x\in Y_{\mathfrak{D}_m}\  \exists\ \ell\geq m\ \textrm{such that}\ \frac{\textrm{dist}\left(o,xg_{\ell}\right)}{\log \ell}\geq \frac{1-\epsilon}{n}.
\end{equation}

By $\left(\ref{intro}\right)$ it is clear that the limit superior set $\mathcal{B}_{\epsilon}:=\cap_{l=1}^{\infty}\cup_{m=l}^{\infty}Y_
{\mathfrak{D}_m}$ is contained in $\mathcal{A}_{\epsilon}$. Moreover, since $\{\sigma\left(Y_{\mathfrak{D}_m}\right)\}_{m\in\mathbb{N}}$ is uniformly bounded from below, $\mathcal{B}_{\epsilon}$ has positive measure. Hence $\mathcal{A}_{\epsilon}$ is of positive measure.

To show that $\left\{\sigma\left(Y_{\mathfrak{D}_m}\right)\right\}$ has a uniform lower bound, we find nice subsets $\mathfrak{D}_m'\subset \mathfrak{D}_m$ with $|\mathfrak{D}_m'|\asymp |\mathfrak{D}_m|$ (here $|\cdot|$ denotes a right $G$-invariant measure on $Q\backslash G$) and we show $\left\{\sigma\left(Y_{\mathfrak{D}'_m}\right)\right\}$ is uniformly bounded from below. One standard way to handle $\sigma\left(Y_{\mathfrak{D}_m'}\right)$ is to use the incomplete Eisenstein series to relate $\sigma\left(Y_{\mathfrak{D}_m'}\right)$ to $|\mathfrak{D}_m'|$. More precisely, for any compactly supported function $f$ on $Q\backslash G$ the corresponding incomplete Eisenstein series $\Theta_f\in L^2\left(\Gamma\backslash G\right)$ attached to $f$ is defined by
$$\Theta_f\left(g\right)= \sum_{\gamma\in \Gamma_{\infty}\backslash \Gamma}f\left(\gamma g\right).$$
Note that if $f$ is supported on $\mathfrak{D}$, then $\Theta_f$ is supported on $Y_{\mathfrak{D}}$. To show that $\left\{\sigma\left(Y_{\mathfrak{D}_m'}\right)\right\}$ is bounded from below, it is enough to show that the $L^2$-norm (with respect to the measure $\sigma$) of the incomplete Eisenstein series $\Theta_{\mathbbm{1}_{\mathfrak{D}_m'}}$ is not too large compared to the measure of $\mathfrak{D}_m'$, where $\mathbbm{1}_{\mathfrak{D}_m'}$ is the characteristic function of $\mathfrak{D}_m'$. To show this, we bound $||\Theta_{\mathbbm{1}_{\mathfrak{D}_m'}}||_2^2$ in terms of $|\mathfrak{D}_m'|$. In fact, for any parameter $\lambda>0$ we define a family of functions $\mathscr{A}_{\lambda}\subset L^2\left(Q\backslash G\right)$ (see description of $\mathscr{A}_{\lambda}$ in section \ref{proof1.2}) and we prove the following bound for functions in $\mathscr{A}_{\lambda}$.

\begin{Thm}
\label{thm 2} Let $G=\text{Iso}^{+}\left(\bbH^{n+1}\right)$ and $\Gamma\subset G$ a non-uniform lattice with a cusp at $\infty$. For any parameter $\lambda>0$ there exists some constant $C$ (depending on $\Gamma$ and $\lambda$) such that
\begin{equation}
\label{kb}||\Theta_f||_2^2\leq C\left(||f||_1^2+ ||f||_2^2\right)
\end{equation}
for any $f\in \mathscr{A}_{\lambda}$, where the norms on the right are with respect to the right $G$-invariant measure on $Q\backslash G$.
\end{Thm}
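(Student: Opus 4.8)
The plan is to establish \eqref{kb} by unfolding $\langle\Theta_f,\Theta_f\rangle$ and organizing the off-diagonal contribution along the Bruhat double cosets $\Gamma_\infty\backslash\Gamma/\Gamma_\infty$; this is the analogue for $G$ of the Rogers-type second moment identity used in \cite{AM} and of the corresponding computation in \cite{DA}, and spectrally the two summands on the right of \eqref{kb} bound the residual (constant) and continuous parts of $\Theta_f$ respectively. Since $\Theta_f$ is left $\Gamma$-invariant and $f$, being defined on $Q\backslash G$ with $\Gamma_\infty\subset Q$, is left $\Gamma_\infty$-invariant, unfolding one copy of $\Theta_f$ and singling out the trivial coset yields
\begin{equation*}
\|\Theta_f\|_2^2=\int_{\Gamma_\infty\backslash G}f(g)\,\overline{\Theta_f(g)}\,dg=\int_{\Gamma_\infty\backslash G}|f(g)|^2\,dg+\int_{\Gamma_\infty\backslash G}f(g)\sum_{\gamma\in(\Gamma_\infty\backslash\Gamma)\smallsetminus\{e\}}\overline{f(\gamma g)}\,dg ,
\end{equation*}
with the measure upstairs normalized to induce $\sigma$. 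By $\int_{\Gamma_\infty\backslash G}=\int_{Q\backslash G}\int_{\Gamma_\infty\backslash Q}$ and the relative compactness of $\Gamma_\infty\backslash Q$, the first integral equals $c_0(\Gamma)\|f\|_2^2$, which accounts for the $\|f\|_2^2$ term; write $R$ for the second integral, absolutely convergent since $\Theta_f\in L^2$. Splitting $(\Gamma_\infty\backslash\Gamma)\smallsetminus\{e\}$ into nontrivial double cosets and unfolding the inner $\Gamma_\infty$-sum on each (the invariant measure on $Q\backslash G$ exists since $Q$ is unimodular), one gets
\begin{equation*}
R=\sum_{[\gamma]\in(\Gamma_\infty\backslash\Gamma/\Gamma_\infty)\smallsetminus\{e\}}\frac{1}{|\Lambda_\gamma|}\int_{G}f(g)\,\overline{f(\gamma g)}\,dg ,\qquad \Lambda_\gamma:=\Gamma_\infty\cap\gamma^{-1}\Gamma_\infty\gamma ,
\end{equation*}
where each $\Lambda_\gamma$ is finite, since an element of $\Gamma$ fixing the two distinct cusps $\infty,\gamma^{-1}\infty$ fixes the geodesic joining them and is therefore elliptic. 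From section \ref{v.group} I would use that the nontrivial double cosets are indexed by a ``denominator'' $c(\gamma)>0$ governing the size of the Ford horoball at $\gamma^{-1}\infty$, that $\inf_\gamma c(\gamma)>0$, and that for each $C$ only finitely many double cosets have $c(\gamma)\le C$.

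The mechanism reducing $R$ to finitely many double cosets is the transformation law of the height. Identifying $Q\backslash G\cong\bbR_{>0}\times(M\backslash K)$ with coordinates $(y,\xi)$, $y$ the $A$-height, the hyperbolic inversion formula gives $y(Q\gamma g)\le c(\gamma)^{-2}\,y(Qg)^{-1}$. The defining property of $\mathscr{A}_\lambda$ (see section \ref{proof1.2}) is arranged so that, for $f\in\mathscr{A}_\lambda$, the integrand $f(g)\overline{f(\gamma g)}$ in the $[\gamma]$-term vanishes unless $y(Qg)$ and $y(Q\gamma g)$ both lie in a range bounded in terms of $\lambda$; together with the inversion formula this forces $c(\gamma)\le C_\lambda$, so only $O_{\Gamma,\lambda}(1)$ double cosets survive, and on the relevant region $y(Qg)$ is squeezed between two constants depending on $\lambda$.

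For each surviving double coset I would write $\int_G f(g)\overline{f(\gamma g)}\,dg=\int_{Q\backslash G}f(g)\,I_\gamma(g)\,dg$ with $I_\gamma(g)=\int_Q\overline{f(\gamma q g)}\,dq$, and evaluate $I_\gamma(g)$ in Iwasawa coordinates: changing variables in the $N$-integral from the translation parameter $n$ to the new height $y'=y(Q\gamma n g)$ via the inversion formula expresses $I_\gamma(g)$ as an integral of $\overline{f}$ over $y'$ (and the $S^{n-1}$-direction) against a density which is an explicit power of $y'$ times a Jacobian factor vanishing like $(\text{distance of }y'\text{ to its maximum }c(\gamma)^{-2}y(Qg)^{-1})^{(n-2)/2}$, the compact $M$-integral contributing only a bounded factor and a rotation of $\xi$. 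Decomposing $I_\gamma=I_\gamma^{\mathrm{far}}+I_\gamma^{\mathrm{near}}$ according as $y'$ is away from or near that maximum, one gets $|I_\gamma^{\mathrm{far}}(g)|\lesssim_{\Gamma,\lambda}c(\gamma)^{-n}\|f\|_1$ (using that $y(Qg)$ and $y'$ are $\lambda$-bounded on the support), while the degenerate part is bounded by Cauchy--Schwarz against the Jacobian — square-integrable because $n\ge2$ — giving $|I_\gamma^{\mathrm{near}}(g)|\lesssim_{\Gamma,\lambda}\|f\|_2$. Hence $\int_{Q\backslash G}|f|\,|I_\gamma^{\mathrm{far}}|\lesssim_{\Gamma,\lambda}c(\gamma)^{-n}\|f\|_1^2$ and $\int_{Q\backslash G}|f|\,|I_\gamma^{\mathrm{near}}|\lesssim_{\Gamma,\lambda}\|f\|_1\|f\|_2\le\|f\|_1^2+\|f\|_2^2$; since each surviving $c(\gamma)\ge\inf_\gamma c(\gamma)>0$, summing the $O_{\Gamma,\lambda}(1)$ double cosets bounds $R$ by $C_{\Gamma,\lambda}(\|f\|_1^2+\|f\|_2^2)$, which together with the diagonal term gives \eqref{kb}.

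The step I expect to be the main obstacle is the per-double-coset estimate, which has no counterpart in \cite{AM,DA} (there arithmetic rigidity collapses the double-coset sum): one must evaluate $I_\gamma(g)$ honestly through the Ford-horoball geometry — carrying out the change of variables in the $Q\backslash G$-coordinates, keeping track of the frame coordinate $\xi\in M\backslash K$, and isolating and bounding the degenerate contribution $I_\gamma^{\mathrm{near}}$ near the tops of the horoballs, which is precisely what forces the $\|f\|_2^2$ term into \eqref{kb}. A secondary issue is assembling the structural input of section \ref{v.group} — the Bruhat double-coset parametrization by denominators bounded away from $0$, the exact inversion formula in $Q\backslash G$-coordinates, the finiteness of the set of double cosets of bounded denominator — and keeping track of the normalization constants in the unfolding.
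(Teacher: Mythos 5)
Your overall route (Rogers-type second moment: unfold both factors, organize the off-diagonal term over Bruhat double cosets $\Gamma_\infty\backslash\Gamma/\Gamma_\infty$, and control it via the height inversion formula) is genuinely different from the paper's, which is spectral: unfold once, express $\int_{\mathcal{F}_{\mathcal{O}_\Gamma}}\Theta_f(u_{\mathbf{x}}g)\,d\mathbf{x}$ through the constant term of non-spherical Eisenstein series (Proposition \ref{const term}), shift the contour to $\mathrm{Re}(s)=\tfrac n2$, and identify the three contributions in Proposition \ref{p.bound}: the critical line gives $\|f\|_2^2$, the pole at $s=n$ gives $\|f\|_1^2$, and the exceptional poles give $\sum_j c_jM_f(s_j)$, which is then bounded by $\|f\|_1^2+\|f\|_2^2$ using \eqref{ff} together with Sogge's bound $\|\phi_m\|_2^2\lesssim (m+1)^{n-1}\|\phi\|_1^2$. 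However, your argument has a genuine gap at its central reduction. The claim that for $f\in\mathscr{A}_\lambda$ the integrand $f(g)\overline{f(\gamma g)}$ vanishes unless the heights lie in a $\lambda$-bounded range, hence that only $O_{\Gamma,\lambda}(1)$ double cosets survive, is false: the definition of $\mathscr{A}_\lambda$ imposes only that $v$ be compactly supported (on an \emph{arbitrary} compact set) together with the ratio condition \eqref{ff}, and that condition is essentially scale-invariant. For instance, smoothings of $v=\mathbbm{1}_{[-R,0]}$ satisfy \eqref{ff} with $\lambda$ depending only on $n$, uniformly in $R$; and in the actual application (Lemma \ref{lem 4}) the functions $v_m$ have support $[\tau_m,T]$ with $\tau_m\to-\infty$. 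For such $f$ every double coset with $c(\gamma)\lesssim e^{-\tau_m}$ contributes, so the number of surviving cosets is unbounded and your truncation step collapses.

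Once the double-coset sum cannot be truncated, carrying it out in full is essentially the classical computation of the scattering term $\mathcal{C}(s)$ of the Eisenstein series as a Dirichlet-type series in the denominators $c(\gamma)$, and summing it uniformly over $f\in\mathscr{A}_\lambda$ is equivalent to controlling the analytic continuation of $\mathcal{C}(s)$ into $(\tfrac n2,n)$ — i.e., precisely the exceptional-pole contribution $M_f(s_j)$ that is the real content of the theorem and the sole reason the hypothesis \eqref{ff} appears. Your sketch never engages with this: the per-coset bounds $|I_\gamma^{\mathrm{far}}|\lesssim c(\gamma)^{-n}\|f\|_1$ summed over all $c(\gamma)\lesssim e^{-\tau_m}$ diverge with $m$ (the number of cosets with $c(\gamma)\le C$ grows like $C^{2n}$, reflecting the convergence of $E(s,g)$ only for $\mathrm{Re}(s)>n$), so no bound of the form $C_{\Gamma,\lambda}(\|f\|_1^2+\|f\|_2^2)$ uniform over $\mathscr{A}_\lambda$ follows. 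A secondary point: your first paragraph equates $\int_{\Gamma_\infty\backslash G}|f|^2$ with $c_0(\Gamma)\|f\|_2^2$, but $f$ is only $Q$-left-invariant, not $\Gamma_\infty$-left-invariant by a compact quotient in the direction transverse to $Q$; the correct factor comes from $\int_{\Gamma_\infty\backslash G}=\int_{Q\backslash G}\int_{\Gamma_\infty\backslash Q}$ as in \eqref{L2}, which is fine, but it does not rescue the off-diagonal term.
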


Our construction of $\mathfrak{D}_m'$ yields that we can take functions in $\mathscr{A}_{\lambda}$ (for some $\lambda$) to approximate $\mathbbm{1}_{\mathfrak{D}_m'}$, then we can use Theorem \ref{thm 2} to bound $||\Theta_{\mathbbm{1}_{\mathfrak{D}_m'}}||_2^2$ in terms of $|\mathfrak{D}_m'|$.

We note that our strategy of proving $\left(\ref{kb}\right)$ is similar to the one used in \cite{KM}. To prove Theorem \ref{thm 2}, we work out an explicit constant term formula for certain non-spherical Eisenstein series (for arbitrary $n\geq 2$). With this constant term formula, a formal computation ensures that we can bound the $L^2$-norm of any incomplete Eisenstein series by the right-hand side of $\left(\ref{kb}\right)$, together with a third term expressed in terms of the exceptional poles of Eisenstein series. Thus $\left(\ref{kb}\right)$ follows if we can bound this third term by the right-hand side of $\left(\ref{kb}\right)$. However, to prove this bound, we need to assume the functions are from $\mathscr{A}_{\lambda}$.

\begin{Rem}
An interesting question is whether $\left(\ref{kb}\right)$ holds uniformly for any $f\in C_c^{\infty}\left(Q\backslash G\right)$. In particular, for our purpose if one can prove $\left(\ref{kb}\right)$ uniformly for linear combinations of nonnegative functions in $\mathscr{A}_{\lambda}$, then the same method implies a stronger Borel-Cantelli law: every sequence of nested cusp neighborhoods $\{B_{r_{\ell}}\}$ with $\sum_{\ell=1}^{\infty}\sigma\left(B_{r_{\ell}}\right)=\infty$ is Borel-Cantelli for unipotent flows. Such a result was obtained in \cite[Remark 8]{DA} by proving $\left(\ref{kb}\right)$ for all nonnegative functions in $C_c^{\infty}\left(Q\backslash G\right)$ when $G$ is a product of copies of $SL_2\left(\bbR\right)\left(\cong\textrm{Iso}^+\left(\bbH^2\right)\right)$ and $SL_2\left(\bbC\right)\left(\cong \textrm{Iso}^+\left(\bbH^3\right)\right)$ and $\Gamma$ is any arithmetic irreducible lattice. Their proof of $\left(\ref{kb}\right)$ is indirect and depends on the existence of a family of lattices for which the Eisenstein series have no exceptional poles. We note that in \cite{VG} Gritsenko gave an example of such a lattice in $\textrm{Iso}^+\left(\bbH^{4}\right)$. Hence using the general constant term formula we get, one can prove the above Borel-Cantelli law (for unipotent flows) for this specific lattice (and its commensurable lattices) in $\textrm{Iso}^+\left(\bbH^4\right)$.
\end{Rem}

\begin{Rem}
We end the introduction by remarking that the sets $\mathfrak{D}_m$ are constructed by taking unions of translations of the neighborhoods at the cusp $\infty$ (along the unipotent flow), hence our method implies a slightly stronger result: logarithm laws for excursions of unipotent flows into any individual cusp (for other cusps, the result can be obtained by conjugating this cusp to $\infty$).
\end{Rem}

\subsection*{Acknowledgements}
I am very grateful to my advisor Dubi Kelmer for his guidance and all the discussions and useful comments. I would like to thank the anonymous referee for many helpful comments that made this paper more readable. This work is partially supported by NSF grant DMS-1401747.

\section{Preliminaries and Notations}
\subsection{Vahlen group}\label{v.group}
Let $\bbH^{n+1}$ denote the $\left(n+1\right)$-dimensional real hyperbolic space and $G= \text{Iso}^{+}\left(\bbH^{n+1}\right)$ be its orientation preserving isometry group. There are various hyperbolic models of $\bbH^{n+1}$ and each model gives an explicit description of $G$. In this paper, we choose the upper half space model and realize $G$ via the Vahlen group (see\cite{La 1} \cite{EGM 87} and \cite{EGM 90} for more details about Vahlen group).

We first briefly recall some facts about Clifford algebra. The Clifford algebra $C_n$ is an associative algebra over $\bbR$ with $n$ generators $e_1, \cdots, e_n$ satisfying relations $e_i^2= -1, e_ie_j= -e_je_i, i\neq j$. Let $\mathcal{P}_n$ be the set of subsets of $\{1,\cdots,n\}$. For $I\in\mathcal{P}_n$, $I=\{i_1,\cdots,i_r\}$ with $i_1< \cdots<i_r$ we define $e_I:=e_{i_1}\cdots e_{i_r}$ and $e_{\emptyset}=1$. These $2^n$ elements $e_I\left(I\in\mathcal{P}_n\right)$ form a basis of $C_n$. The Clifford algebra $C_n$ has a main anti-involution $^{\ast}$ and a main involution $'$. Explicitly, their actions on the basis elements are given by $\left(e_{i_1}\cdots e_{i_r}\right)^{\ast}= e_{i_r}\cdots e_{i_1}$ and $\left(e_{i_1}\cdots e_{i_r}\right)'= \left(-1\right)^r e_{i_1}\cdots e_{i_r}.$ Their composition $\overline{e_{I}}:= \left(e_{I}'\right)^{\ast}$ gives the conjugation map on $C_n$.

For any $1\leq i\leq n$, let $\mathbb{V}^{i}$ denote the real vector space spanned by $1, e_1, \cdots, e_i$. Note that $\textrm{dim}\mathbb{V}^i= i+1$. The Clifford group $T_{i}$ is defined to be the collection of all finite products of non-zero elements from $\mathbb{V}^{i}$ with group operation given by multiplication. There is a well-defined norm on $\mathbb{V}^{n}$ given by $|v|= \sqrt{v\bar{v}}$ and it extends multiplicatively to a norm on $T_{n}$.

In this setting, the $\left(n+1\right)$-dimensional hyperbolic space model is the upper half space
\begin{equation}
\label{Hn}\mathbb{H}^{n+1}:= \left\{x_0+ x_1e_1+ \cdots+ x_ne_n\in \mathbb{V}^n\ \big|\ x_i\in \mathbb{R}, x_n> 0\right\}
\end{equation}
endowed with the Riemannian metric
\begin{equation}
\label{hm}ds^2=\frac{dx_0^2+ \cdots + dx_n^2}{x_n^2}.
\end{equation}
Let $M_2\left(C_n\right)$ be the set of $2 \times 2$ matrices over $C_n$. The Vahlen group $SL\left(2, T_{n-1}\right)$ is defined by
\begin{displaymath}
 SL\left(2, T_{n-1}\right)=\left\{ \begin{pmatrix}
a& b\\
c& d
\end{pmatrix}\in M_2\left(C_n\right)\ \bigg|\ \begin{array}{ll}
a, b, c, d\in T_{n-1}\cup \{0\},\\
ab^{\ast}, cd^{\ast}\in \mathbb{V}^{n-1},\\
ad^{\ast}-bc^{\ast}=1
\end{array} \right\}.
\end{displaymath}

An element $g= \begin{pmatrix}
a& b\\
c& d
\end{pmatrix}$ in $SL\left(2, T_{n-1}\right)$ acts on $\bbH^{n+1}$ as an isometry via the M$\ddot{\textrm{o}}$bius transformation
\begin{equation}
\label{action} g\cdot v= \left(av+ b\right)\left(cv+ d\right)^{-1}.
\end{equation}
This gives a surjective homomorphism from $SL\left(2, T_{n-1}\right)$ to $\textrm{Iso}^{+}\left(\mathbb{H}^{n+1}\right)$ with kernel $\pm I_2$. Hence $G$ is realized as $PSL\left(2, T_{n-1}\right):=SL\left(2, T_{n-1}\right)/\{\pm I_2\}$. Here $I_2$ is the $2\times 2$ identity matrix

We fix an Iwasawa decomposition
$$PSL\left(2, T_{n-1}\right)= NAK,$$ with
$$N=\left\{u_{\textbf{x}}=\begin{pmatrix}
1& \textbf{x}\\
0& 1
\end{pmatrix}\ \big|\ \textbf{x}= x_0+x_1e_1+ \cdots+ x_{n-1}e_{n-1}\in \mathbb{V}^{n-1}\right\},$$
$$A= \left\{a_t=\begin{pmatrix}
e^{\frac{t}{2}}& 0\\
0& e^{-\frac{t}{2}}
\end{pmatrix}\ \big|\ t\in \bbR\right\},$$ and
$$K=\left\{g\in SL\left(2, T_{n-1}\right)\ \big|\ g\cdot e_n= e_n\right\}/\{\pm I_2\}$$
is the stabilizer of $e_n$. From this we can identify $G/K$ with $\mathbb{H}^{n+1}$ by sending $gK$ to $g\cdot e_n$.

An element in $K$ is of the form
$\begin{pmatrix}
q_2'&-q_1'\\
q_1&q_2
\end{pmatrix}$ with $|q_1|^2+|q_2|^2= 1$ and $q_1{q_2}^{\ast}\in \mathbb{V}^{n-1}$. Let $M$ be the centralizer of $A$ in $K$, that is, $M$ is the subgroup of $K$ consisting of diagonal matrices. For later use we note that $K$ is isomorphic to $SO\left(n+1\right)$, $M$ is isomorphic to $SO\left(n\right)$ and $M\backslash K$ can be identified with the $n$-sphere $S^n$ via the map
\begin{align*}
M\backslash K &\longrightarrow S^n:=\left\{x_0+x_1e_1+\cdots x_ne_n\ |\ x_i\in\bbR, x_0^2+x_1^2+\cdots x_n^2=1\right\}\\
\begin{pmatrix}
q_2' & -q_1'\\
q_1 & q_2\end{pmatrix}&\longmapsto \qquad\qquad\qquad 2\overline{q_1}q_2+ \left(|q_2|^2-|q_1|^2\right)e_n.
\end{align*}
Here this map is well-defined since $q_1q_2^{\ast}\in\mathbb{V}^{n-1}$ if and only if $\overline{q_1}q_2\in \mathbb{V}^{n-1}$ (\cite[Corollary 7.15]{JP}).

\subsection{Coordinates and normalization.}
Let $G=NAK$ be the fixed Iwasawa decomposition as above and let $Q=NM$. Under the coordinates $g= u_{\textbf{x}}a_tk$, the Haar measure of $G$ is given by
$$dg= e^{-nt}d\textbf{x}dtdk,$$
where $d\textbf{x}$ is the usual Lebesgue measure on $N$ (identified with $\bbR^{n}$), $dk$ is the probability Haar measure on $K$. Hence the probability Haar measure $\sigma=\sigma_{\Gamma}$ on $\Gamma\backslash G$ is given by
\begin{equation}
\label{haar}d\sigma\left(g\right)=\frac{1}{\nu_{\Gamma}}e^{-nt}d\textbf{x}dtdk
\end{equation}
with $\nu_{\Gamma}=\int_{\mathcal{F}_{\Gamma}}dg$, where $\mathcal{F}_{\Gamma}$ is a fundamental domain for $\Gamma\backslash G$.

We normalize the measures on various spaces as following. First for $\phi\in L^2\left(M\backslash K\right)$, we view $\phi$ as a left $M$-invariant function on $K$ and normalize the Haar measure on $M\backslash K$ (also denoted by $dk$) such that
$$\int_{M\backslash K}\phi\left(k\right)dk= \int_{K}\phi\left(k\right)dk.$$
Next we identify $Q\backslash G= A\times M\backslash K$ and we normalize the Haar measure on $Q\backslash G$ so that for any $f\in C^{\infty}_c\left(Q\backslash G\right)$ we have
\begin{equation}
\label{Q}\int_{Q\backslash G}f\left(g\right)dg= \int_{\bbR}\int_{M\backslash K}f\left(a_tk\right)e^{-nt}dtdk.
\end{equation}
We then normalize the Haar measure on $Q$ so that for any $f\in C^{\infty}_c(G)$ we have
\begin{equation}
\int_Gf\left(g\right)dg= \int_{Q\backslash G}\int_{Q}f\left(qg\right)dqdg.
\end{equation}

\subsection{Cusps and reduction theory}\label{grt}
Fix the notations as above. Let
$$\partial \mathbb{H}^{n+1}:=\left\{x_0+ x_1e_1+\cdots+ x_{n-1}e_{n-1}\in\mathbb{V}^{n-1}\ |\ x_i\in\bbR\right\}\cup\left\{\infty\right\}$$
denote the boundary of $\mathbb{H}^{n+1}$. The action (\ref{action}) extends naturally to $\partial\mathbb{H}^{n+1}$ by the same formula. Let $P=NAM$ be the subgroup of upper triangular matrices in $G$. We note that $P$ is the stabilizer of $\infty$. Let $\Gamma \subseteq G$ be a non-uniform lattice in $G$. Define
 $$\Gamma_{\infty}= \Gamma\cap P$$ and
 $$\Gamma_{\infty}'= \Gamma\cap N.$$
Note that $\Gamma_{\infty}$ is the stabilizer of $\infty$ in $\Gamma$, and $\Gamma_{\infty}'$ consists the identity and unipotent elements in $\Gamma_{\infty}$. We say that $\Gamma$ has a cusp at $\infty$ if $\Gamma_{\infty}'$ is nontrivial. Note that if $\Gamma$ has a cusp at $\infty$, then $\Gamma\backslash G$ having finite co-volume implies that $\Gamma_{\infty}'$ is a lattice (free abelian and of full rank) in $N$ (see \cite[Definition $0.5$ and Theorem $0.7$]{Ga}). Moreover, we note that by discreteness, $\Gamma_{\infty}\cap A=\{I_2\}$. Thus the conjugation action of $\Gamma_{\infty}$ on $N\left(=\mathbb{V}^{n-1}\right)$ and $\Gamma_{\infty}'$ induces an injection
$$\Gamma_{\infty}/\Gamma_{\infty}'\hookrightarrow SO\left(\mathbb{V}^{n-1}\right)\cap GL\left(\Gamma_{\infty}'\right).$$
Hence $\Gamma_{\infty}'$ is a finite index subgroup of $\Gamma_{\infty}$. Denote by $[\Gamma_{\infty}: \Gamma_{\infty}']$ this index.

For any $\xi\in \partial \mathbb{H}^{n+1}$, there exists some $g\in G$ such that $g\cdot \xi=\infty$. We say $\Gamma$ has a cusp at $\xi$ if $g\Gamma g^{-1}$ has a cusp at $\infty$. And we say two cusps $\xi, \xi'$ are $\Gamma$-equivalent if there exists some $\gamma\in \Gamma$ such that $\gamma\cdot \xi=\xi'$. Assume that $\Gamma$ has a cusp at $\infty$, define the lattice
$$\mathcal{O}_{\Gamma}:=\{\textbf{x}\in \mathbb{V}^{n-1}\ |\ u_{\textbf{x}}\in \Gamma_{\infty}'\}$$
in $\mathbb{V}^{n-1}$. Let $\mathcal{F}_{\mathcal{O}_{\Gamma}}\subset \mathbb{V}^{n-1}$ be a fundamental domain for $\mathcal{O}_{\Gamma}$. One easily sees that the set
\begin{equation}
\label{fu} \mathcal{F}_{\infty}'= \{u_{\textbf{x}}a_tk\ |\ \textbf{x}\in \mathcal{F}_{\mathcal{O}_{\Gamma}}, t\in \mathbb{R}, k\in K\}
\end{equation}
is a fundamental domain for $\Gamma_{\infty}'\backslash G$. It contains $[\Gamma_{\infty}: \Gamma_{\infty}']$ copies of $\Gamma_{\infty}\backslash G$. For later use, we note that since $\Gamma_{\infty}'$ is a lattice in $N$, $\Gamma_{\infty}\backslash Q$ is relatively compact, hence $$\omega_{\Gamma}:=\int_{\Gamma_{\infty}\backslash Q}dq$$
is finite.

For any $\tau\in \bbR$, let us denote $A(\tau)= \{a_t\in A\ |\ t\geq \tau\}$. Recall that a Siegel set is a subset of $G$ of the form $\Omega_{\tau,U}=U A(\tau)K$ where $U$ is an open, relatively compact subset of $N$. Since $G$ is of real rank one, we can apply the reduction theory of Garland and Raghunathan (\cite{Ga} Theorem 0.6). That is, there exists $\tau_0\in \bbR$, an open, relatively compact subset $U_0\subset N$, a finite set $\Xi=\{\xi_1,\cdots,\xi_h\}\subset G$ (corresponding to a complete set of $\Gamma$-inequivalent cusps) and an open, relatively compact subset $\mathfrak{C}$ of $G$ such that the Siegel fundamental domain
\begin{equation}
\label{sf} \mathcal{F}_{\Gamma, \tau_0,U_0}= \mathfrak{C}\bigcup\left(\bigcup_{\xi_j\in \Xi}\xi_j\Omega_{\tau_0,U_0}\right),
\end{equation} satisfies the following properties:
\begin{enumerate}
\item[(1)] $\Gamma\mathcal{F}_{\Gamma, \tau_0, U_0}= G$;
\item[(2)] the set $\{\gamma\in \Gamma\ |\ \gamma\mathcal{F}_{\Gamma, \tau_0, U_0}\cap \mathcal{F}_{\Gamma, \tau_0, U_0}\neq \emptyset\}$ is finite;
\item[(3)] $\gamma \xi_i\Omega_{\tau_0,U_0}\cap \xi_j\Omega_{\tau_0,U_0}= \emptyset$ for all $\gamma\in \Gamma$ whenever $\xi_i\neq \xi_j\in\Xi$.
\end{enumerate}

In other words, the restriction to $\mathcal{F}_{\Gamma, \tau_0,U_0}$ of the natural projection of $G$ onto $\Gamma\backslash G$ is surjective, at most finite-to-one and the cusp neighborhood of each cusp of $\Gamma\backslash G$ can be taken to be disjoint. We will fix this Siegel fundamental domain $\mathcal{F}_{\Gamma, \tau_0,U_0}$ throughout the paper. For further use, we note that $U_0$ contains a fundamental domain of $\Gamma_{\infty}'\backslash N$.

\subsection{The distance function.}
Fix a non-uniform lattice $\Gamma$ in $G$. Let $\textrm{dist}_G$ and $\textrm{dist}=\textrm{dist}_{\Gamma}$ denote the hyperbolic distance functions on $G/K=\mathbb{H}^{n+1}$ and $\Gamma\backslash G/K= \Gamma\backslash \mathbb{H}^{n+1}$ respectively. By slightly abuse of notation,  we also denote $\textrm{dist}_{G}$ and $\textrm{dist}$ to be their lifts to $G$ and $\Gamma\backslash G$ respectively. In particular, $\textrm{dist}_G$ is left $G$-invariant and satisfies $\textrm{dist}_G\left(I_2, a_tk\right)= t$ for any $t\geq 0$ and $k\in K$, where $I_2$ is the identity matrix in $G$. Moreover, for any $g,h\in G$, $\textrm{dist}_G$ and $\textrm{dist}$ satisfy the relation
$$\textrm{dist}\left(\Gamma g, \Gamma h\right)=\inf_{\gamma\in \Gamma}\textrm{dist}_G\left(g,\gamma h\right).$$

Clearly, $\textrm{dist}\left(\Gamma g,\Gamma h\right)\leq \textrm{dist}_{G}\left(g, h\right)$. Conversely, if $g, h$ are from the Siegel set $\Omega_{\tau_0, U_0}$, then there exists a constant $D$ such that $\textrm{dist}_{G}\left(\xi_ig, \gamma\xi_j h\right)\geq \textrm{dist}_{G}\left(g, h\right)- D$ for any $\xi_i,\xi_j\in\Xi$ and any $\gamma\in \Gamma$ (see \cite[Theorem C]{Bo}). In particular, this implies
$$\textrm{dist}\left(\Gamma\xi_jg, \Gamma\xi_j h\right)\geq \textrm{dist}_{G}\left(g, h\right)- D$$
for any $\xi_j\in \Xi$ and any $g, h\in\Omega_{\tau_0, U_0}$. We then have
\begin{Lem}
\label{Lem.dist} For $o\in \mathcal{F}_{\Gamma, \tau_0, U_0}$ fixed, there exists a constant $D'$ such that
\begin{equation}
 \label{dist}\textrm{dist}_{G}\left(o, \xi_j g\right)- D'\leq \textrm{dist}\left(o, \xi_j g\right)\leq \textrm{dist}_{G}\left(o, \xi_j g\right)
\end{equation}
for any $\xi_j\in \Xi$ and any $g\in\Omega_{\tau_0, U_0}$.
\end{Lem}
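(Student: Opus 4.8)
The plan is to separate the two inequalities, the right-hand one being trivial and the left-hand one carrying the content. For $\textrm{dist}(o,\xi_j g)\le\textrm{dist}_G(o,\xi_j g)$ one just takes $\gamma=I_2$ in the formula $\textrm{dist}(\Gamma o,\Gamma\xi_j g)=\inf_{\gamma\in\Gamma}\textrm{dist}_G(o,\gamma\xi_j g)$. So the task is to produce a $D'$ with $\textrm{dist}(o,\xi_j g)\ge\textrm{dist}_G(o,\xi_j g)-D'$ for all $\xi_j\in\Xi$ and all $g\in\Omega_{\tau_0,U_0}$.

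First I would reduce to one convenient base point: if the bound holds for some fixed $o'\in\mathcal{F}_{\Gamma,\tau_0,U_0}$ with constant $D''$, then it holds for an arbitrary fixed $o$ with constant $D''+2\,\textrm{dist}_G(o,o')$, since $|\textrm{dist}(\Gamma o,\Gamma\xi_j g)-\textrm{dist}(\Gamma o',\Gamma\xi_j g)|$ and $|\textrm{dist}_G(o,\xi_j g)-\textrm{dist}_G(o',\xi_j g)|$ are each at most $\textrm{dist}_G(o,o')$, a finite constant. Thus I may take $o=\xi_1 g_0$ with $g_0\in\Omega_{\tau_0,U_0}$ a fixed element (for instance $g_0=u a_{\tau_0}$ for any fixed $u\in U_0$), so that $o$ lies in the cusp part $\xi_1\Omega_{\tau_0,U_0}$ of the Siegel fundamental domain.

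Then I would apply the estimate of Borel recalled just before the lemma: for every $\gamma\in\Gamma$ and every $\xi_j\in\Xi$ one has $\textrm{dist}_G(o,\gamma\xi_j g)=\textrm{dist}_G(\xi_1 g_0,\gamma\xi_j g)\ge\textrm{dist}_G(g_0,g)-D$, and taking the infimum over $\gamma$ gives $\textrm{dist}(\Gamma o,\Gamma\xi_j g)\ge\textrm{dist}_G(g_0,g)-D$. Finally, a routine computation in the upper half-space model finishes things: writing $g=u_{\textbf{x}}a_s k\in\Omega_{\tau_0,U_0}$ with $\textbf{x}\in U_0$ bounded and $s\ge\tau_0$, so that $g\cdot e_n=\textbf{x}+e^s e_n$, the $1$-Lipschitz property of $p\mapsto\log(p_n)$ for the hyperbolic metric gives $\textrm{dist}_G(g_0,g)\ge s-C_0$, while the triangle inequality together with left $G$-invariance of $\textrm{dist}_G$ gives $\textrm{dist}_G(o,\xi_j g)\le\textrm{dist}_G(o,\xi_j a_{\tau_0})+\textrm{dist}_G(a_{\tau_0},g)\le s+C_0$, for a single constant $C_0$ valid for all $g\in\Omega_{\tau_0,U_0}$ and all the finitely many $\xi_j$. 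Combining, $\textrm{dist}(\Gamma o,\Gamma\xi_j g)\ge s-C_0-D\ge\textrm{dist}_G(o,\xi_j g)-(2C_0+D)$, which is the claim with $D'=2C_0+D$.

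The essential input here is the Borel estimate quoted above the lemma; granting it, the argument is soft. The one thing that needs care is the uniformity in the last step: the same $C_0$ must work for every $g$ in the non-compact Siegel set $\Omega_{\tau_0,U_0}$ and for each of the finitely many $\xi_j\in\Xi$, which is exactly where relative compactness of $U_0$ and finiteness of $\Xi$ enter. I expect this bookkeeping, rather than any conceptual difficulty, to be the main obstacle.
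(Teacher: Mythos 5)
Your proof is correct and rests on the same key ingredient as the paper (the Borel distance estimate together with triangle inequalities), but the execution is more roundabout and more coordinate-dependent than the paper's. The paper keeps $o\in\mathcal{F}_{\Gamma,\tau_0,U_0}$ arbitrary, fixes an auxiliary $h\in\Omega_{\tau_0,U_0}$, and runs a short chain: $\textrm{dist}(o,\xi_jg)\geq\textrm{dist}(\xi_jh,\xi_jg)-\textrm{dist}(o,\xi_jh)$ by the triangle inequality on $\Gamma\backslash G$; then Borel gives $\textrm{dist}(\xi_jh,\xi_jg)\geq\textrm{dist}_G(h,g)-D$; then $\textrm{dist}_G(h,g)=\textrm{dist}_G(\xi_jh,\xi_jg)\geq\textrm{dist}_G(o,\xi_jg)-\textrm{dist}_G(o,\xi_jh)$ by $G$-invariance and the triangle inequality on $G$; and $\textrm{dist}(o,\xi_jh)\leq\textrm{dist}_G(o,\xi_jh)$, giving $D'=2\sup_j\textrm{dist}_G(o,\xi_jh)+D$. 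You instead first reduce $o$ to a cusp point $\xi_1g_0$, apply Borel directly inside the infimum to get $\textrm{dist}(\Gamma o,\Gamma\xi_jg)\geq\textrm{dist}_G(g_0,g)-D$, and then compare $\textrm{dist}_G(g_0,g)$ with $\textrm{dist}_G(o,\xi_jg)$ via an explicit upper-half-space calculation (the $1$-Lipschitz property of $\log(p_n)$). Both the reduction to a cusp base point and the coordinate computation are avoidable: the argument never needs $o$ to be in a cusp, and the final comparison already follows softly from $\textrm{dist}_G(g_0,g)=\textrm{dist}_G(\xi_jg_0,\xi_jg)\geq\textrm{dist}_G(o,\xi_jg)-\textrm{dist}_G(o,\xi_jg_0)$, with $\textrm{dist}_G(o,\xi_jg_0)$ uniformly bounded over the finitely many $\xi_j\in\Xi$. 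The net gain of the paper's route is that it makes no reference to the explicit geometry of $\mathbb{H}^{n+1}$; your route works but uses model-specific facts for a step that only needs metric generalities.
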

\begin{Rem}
We view $o, \xi_jg$ as elements in $\Gamma\backslash G$ when we write $\textrm{dist}\left(o,\xi_j g\right)$, and as elements in $G$ when we write $\textrm{dist}_{G}\left(o, \xi_j g\right)$.
\end{Rem}
\begin{proof}
Only the first inequality needs a proof. Fix an arbitrary $h\in\Omega_{\tau_0, U_0}$, we have
\begin{align*}
\textrm{dist}\left(o, \xi_jg\right)&\geq \textrm{dist}\left(\xi_jh, \xi_jg\right)-\textrm{dist}\left(o, \xi_jh\right)\\
                   &\geq \textrm{dist}_{G}\left(h, g\right)-D-\textrm{dist}_{G}\left(o, \xi_jh\right)\\
                   &= \textrm{dist}_{G}\left(\xi_jh, \xi_jg\right)-D-\textrm{dist}_{G}\left(o, \xi_jh\right)\\
                   &\geq \textrm{dist}_{G}\left(o, \xi_jg\right)-2\textrm{dist}_{G}\left(o, \xi_jh\right)-D.
\end{align*}
Then $D'= 2\sup_{\xi_j\in \Xi}\textrm{dist}_{G}\left(o, \xi_jh\right)+ D$ satisfies $\left(\ref{dist}\right)$.
\end{proof}
Note that any $g\in \Omega_{\tau_0,U_0}$ can be written as $g=ua_tk$ with $u\in U_0, t\geq \tau_0, k\in K$. Since $U_0$ is relatively compact, $\Xi$ is finite and $\textrm{dist}$ is right $K$-invariant, in view of Lemma \ref{Lem.dist} we have
\begin{equation}
\label{final} \textrm{dist}\left(o, \xi_jg\right)= \textrm{dist}_{G}\left(o, a_t\right)+ O\left(1\right)= t+ O\left(1\right).
\end{equation}
In particular, if $\Gamma$ has a cusp at $\infty$, $\Xi$ can be taken such that it contains the identity element. Hence in this case,
\begin{equation}
\label{final 2}\textrm{dist}\left(o, g\right)= t+ O(1)
\end{equation}
for any $g=ua_tk\in \Omega_{\tau_0, U_0}$.
Finally, we note that when $r$ is sufficiently large, $B_r$ is a collection of neighborhoods at all cusps. In view of the above reduction theory and the Haar measure $\left(\ref{haar}\right)$ we have
\begin{equation}
\label{norme}\sigma\left(B_r\right)\asymp e^{-nr}
\end{equation}
for any $r>0$.

\section{Incomplete Eisenstein series}
Let $G$ be as before, $\Gamma\subset G$ be a non-uniform lattice in $G$ with a cusp at $\infty$. Given a compactly supported function $f\in L^2(Q\backslash G)$, we define the incomplete Eisenstein series attached to $f$ by
$$\Theta_f(g)= \sum_{\gamma\in \Gamma_{\infty}\backslash \Gamma}f\left(\gamma g\right).$$
Note that $\Theta_f$ is left $\Gamma$-invariant since $f$ is left $\Gamma_{\infty}$-invariant ($\Gamma_{\infty}\subset Q$). Moreover, since $f$ is compactly supported, the summation is a finite sum. Hence it is a well-defined function on $\Gamma\backslash G$. We first give a simple but useful identity related to $\Theta_f$ given by the standard unfolding trick.
\begin{Lem}
\label{unfold} For $\Theta_f$ as above and any $F\in L^2\left(\Gamma\backslash G\right)$
$$\int_{\Gamma\backslash G}\Theta_f(g)F\left(g\right)d\sigma\left(g\right)= \int_{\Gamma_{\infty}\backslash G}f\left(g\right)F\left(g\right)d\sigma\left(g\right).$$
\end{Lem}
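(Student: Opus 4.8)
The plan is to prove the identity by the standard unfolding of the sum defining $\Theta_f$ against a Borel fundamental domain for $\Gamma\backslash G$, using that $G$ is unimodular, so that Haar measure descends compatibly to $\Gamma\backslash G$ and to $\Gamma_{\infty}\backslash G$ (in both cases I keep writing $\sigma$ for the corresponding descent). First I would fix a Borel fundamental domain $\mathcal{F}$ for $\Gamma\backslash G$ and a system of representatives for $\Gamma_{\infty}\backslash\Gamma$. Since $G=\bigsqcup_{\delta\in\Gamma}\delta\mathcal{F}$ up to a null set and $\Gamma=\bigsqcup_{\gamma}\Gamma_{\infty}\gamma$, the set $\widetilde{\mathcal{F}}:=\bigsqcup_{\gamma\in\Gamma_{\infty}\backslash\Gamma}\gamma\mathcal{F}$ is, up to a null set, a fundamental domain for $\Gamma_{\infty}\backslash G$. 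Note also that $g\mapsto f(\gamma g)$ depends only on the coset $\Gamma_{\infty}\gamma$, since $f$ is left $\Gamma_{\infty}$-invariant ($\Gamma_{\infty}\subset Q$), so every expression below is well defined, and $\Theta_f$ descends to $\Gamma\backslash G$ because right multiplication by $\delta\in\Gamma$ permutes the cosets $\Gamma_{\infty}\gamma$.

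With these preliminaries in place I would compute
\[
\int_{\Gamma_{\infty}\backslash G}f(g)F(g)\,d\sigma(g)=\sum_{\gamma\in\Gamma_{\infty}\backslash\Gamma}\int_{\gamma\mathcal{F}}f(g)F(g)\,d\sigma(g)=\sum_{\gamma\in\Gamma_{\infty}\backslash\Gamma}\int_{\mathcal{F}}f(\gamma g)F(\gamma g)\,d\sigma(g),
\]
where the second equality is the substitution $g\mapsto\gamma g$ together with left invariance of $\sigma$ under $\Gamma$. Since $F$ is left $\Gamma$-invariant we have $F(\gamma g)=F(g)$, and interchanging the (for each fixed $g$ finite) sum over $\Gamma_{\infty}\backslash\Gamma$ with the integral over $\mathcal{F}$ yields
\[
\int_{\Gamma_{\infty}\backslash G}f(g)F(g)\,d\sigma(g)=\int_{\mathcal{F}}\Big(\sum_{\gamma\in\Gamma_{\infty}\backslash\Gamma}f(\gamma g)\Big)F(g)\,d\sigma(g)=\int_{\Gamma\backslash G}\Theta_f(g)F(g)\,d\sigma(g),
\]
which is the asserted identity.

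The only genuine point to verify, and the step I expect to require the most care (modest as the lemma is), is the legitimacy of interchanging summation and integration. I would handle this via Fubini--Tonelli: first apply Tonelli to $|f|$ and $|F|$ to get $\int_{\Gamma_{\infty}\backslash G}|f||F|\,d\sigma=\int_{\Gamma\backslash G}\Theta_{|f|}|F|\,d\sigma$ as an identity in $[0,\infty]$, and then observe that the right-hand side is finite: $\Theta_{|f|}$ is a bounded function on $\Gamma\backslash G$ because $f$ has compact support in $Q\backslash G$ and, by the reduction theory of Section~\ref{grt} together with discreteness of $\Gamma$, for each $g$ only a uniformly bounded number of cosets $\gamma\in\Gamma_{\infty}\backslash\Gamma$ satisfy $\gamma g\in\mathrm{supp}(f)$; moreover $||F||_{L^1(\Gamma\backslash G)}\le||F||_{2}<\infty$ since $\sigma$ is a probability measure on $\Gamma\backslash G$. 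Hence $fF$ is $\sigma$-integrable on $\Gamma_{\infty}\backslash G$, Fubini applies to the signed (or complex) integrand, and the chain of equalities above is fully justified.
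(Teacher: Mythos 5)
Your proof is correct and uses the same unfolding argument as the paper (the paper writes the identical chain of equalities in the opposite order, starting from the left-hand side and unfolding $\mathcal{F}_{\Gamma}$ into $\bigcup_{\gamma\in\Gamma_{\infty}\backslash\Gamma}\gamma\mathcal{F}_{\Gamma}$). The only difference is that you spell out the Fubini--Tonelli justification for interchanging sum and integral, which the paper leaves implicit; that extra care is harmless and sound.
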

\begin{proof}
Let $\mathcal{F}_{\Gamma}$ be a fundamental domain for $\Gamma\backslash G$. Note that $\mathcal{F}_{\infty}= \cup_{\gamma\in \Gamma_{\infty}\backslash \Gamma}\gamma \mathcal{F}_{\Gamma}$ form a fundamental domain for $\Gamma_{\infty}\backslash G$, hence
\begin{align*}
\int_{\mathcal{F}_{\Gamma}}\Theta_f(g)F\left(g\right)d\sigma\left(g\right)&= \sum_{\gamma\in \Gamma_{\infty}\backslash \Gamma}\int_{\mathcal{F}_{\Gamma}}f\left(\gamma g\right)F\left(g\right)d\sigma\left(g\right)\\
                                                    &= \sum_{\gamma\in \Gamma_{\infty}\backslash \Gamma}\int_{\gamma\mathcal{F}_{\Gamma}}f\left(g\right)F\left(g\right)d\sigma\left(g\right)\\
                                                    &= \int_{\bigcup_{\gamma\in \Gamma_{\infty}\backslash \Gamma}\gamma \mathcal{F}_{\Gamma}}f\left(g\right)F\left(g\right)d\sigma\left(g\right)\\
                                                    &= \int_{\Gamma_{\infty}\backslash G}f\left(g\right)F\left(g\right)d\sigma\left(g\right).\qedhere
\end{align*}
\end{proof}
In particular, taking $F= \overline{\Theta_f}= \Theta_{\overline{f}}$ we get
\begin{equation}
 ||\Theta_f||_2^2= \int_{\Gamma_{\infty}\backslash G}\overline{f\left(g\right)}\Theta_f\left(g\right)d\sigma\left(g\right).
\end{equation}
Moreover, by $\left(\ref{haar}\right)$ and $\left(\ref{fu}\right)$ we have
\begin{equation}
\label{L2} ||\Theta_f||_2^2= \frac{1}{[\Gamma_{\infty}: \Gamma_{\infty}']\nu_{\Gamma}}\int_{K}\int_{\bbR}\overline{f\left(a_tk\right)}e^{-nt}\int_{\mathcal{F}_{\mathcal{O}_{\Gamma}}}\Theta_f\left(u_{\textbf{x}}a_tk\right)d\textbf{x}dtdk.
\end{equation}
Next we will compute $||\Theta_f||_2^2$ by expressing the term $\int_{\mathcal{F}_{\mathcal{O}_{\Gamma}}}\Theta_f\left(u_{\textbf{x}}a_tk\right)d\textbf{x}$ as an integral of certain non-spherical Eisenstein series (see Lemma $\ref{l.bound}$). Before we can do that, we need to recall some facts of spherical Eisenstein series of real rank one groups (see \cite{GW} for the statements and \cite{RL} for the general theory).

\subsection{Spherical Eisenstein series.}
Denote by $C^{\infty}\left(Q\backslash G/K\right)$ the space of smooth left $Q$-invariant and right $K$-invariant functions on $G$. For any $s\in \bbC$, define the function $\varphi_s\in C^{\infty}\left(Q\backslash G/K\right)$ by
\begin{equation}
\label{eigen} \varphi_s\left(ua_tk\right)= e^{st}.
\end{equation}

Given a lattice $\Gamma\subset G$ with a cusp at $\infty$, the spherical Eisenstein series (corresponding to the cusp at $\infty$) is defined by $$E\left(s, g\right)= \sum_{\gamma\in \Gamma_{\infty}\backslash \Gamma}\varphi_s\left(\gamma g\right).$$ This series converges for $\textrm{Re}\left(s\right)> n$, it is right $K$-invariant and satisfies the following differential equation
$$\left(\Delta+s\left(n-s\right)\right)E\left(s,g\right)=0,$$
where $\Delta=e^{2t}\left(\frac{\partial^2}{\partial x_0^2}+\cdots+\frac{\partial^2}{\partial x_{n-1}^2}\right)+\left(\frac{\partial^2}{\partial t^2}-n\frac{\partial}{\partial t}\right)$ is the Laplace-Beltrami operator on the upper half space
$$\mathbb{H}^{n+1}= \left\{x_0+ x_1e_1+ \cdots+ x_{n-1}e_{n-1}+ e^{t}e_n\ \big|\ x_i, t\in \mathbb{R}\right\}.$$
The constant term of the Eisenstein series (corresponding to the cusp at $\infty$) is defined as
$$E^0\left(s, g\right)= \frac{1}{|\mathcal{F}_{\mathcal{O}_{\Gamma}}|}\int_{\textbf{x}\in \mathcal{F}_{\mathcal{O}_{\Gamma}}}E\left(s,u_{\textbf{x}}g\right)d\textbf{x}.$$ It has the form
$$E^0(s, g)= \varphi_s(g)+ \mathcal{C}_{\Gamma}(s)\varphi_{n-s}(g),$$ where the function $\mathcal{C}(s)=\mathcal{C}_{\Gamma}(s)$ can be extended to a meromorphic function on the half plane $\textrm{Re}(s)\geq \frac{n}{2}$ with a simple pole at $s=n$ and only possibly finitely many simple poles (called exceptional poles) on the interval $(\frac{n}{2}, n)$. Finally we note that
using the functional equation satisfied by the Eisenstein series and the same argument in \cite[p. 10]{KM}, $|\mathcal{C}(s)|\leq 1$ for $\textrm{Re}(s)= \frac{n}{2}$.

\subsection{The raising operator}
Let $\mathfrak{g}$ and $\mathfrak{k}$ be the Lie algebra of $G$ and $K$ respectively. Let $\fg_{\bbC}=\fg\otimes_{\bbR}\bbC$ and $\fk_{\bbC}=\fk\otimes_{\bbR}\bbC$ be their complexifications. As a real vector space, $\mathfrak{k}$ is spanned by the matrices
$$ -\frac12\begin{pmatrix}
e_ie_j & 0\\
0 & e_ie_j\end{pmatrix} \left(1\leq i<j\leq n-1\right),\quad -\frac12\begin{pmatrix}e_l &0\\
0& -e_l\end{pmatrix} \left(1\leq l\leq n-1\right),$$
$$\frac12\begin{pmatrix} 0& e_m\\
e_m &0\end{pmatrix} \left(1\leq m\leq n-1\right),\quad \frac12\begin{pmatrix}
0 & 1\\
-1 & 0\end{pmatrix},$$
where $e_1,\cdots, e_{n-1}$ are elements in the Clifford algebra $C_n$ as before.
The Lie algebra $\mathfrak{g}$ is spanned by the matrices as above and
$$\begin{pmatrix} 0 & e_i\\
0 & 0\end{pmatrix} \left(1\leq i\leq n-1\right),\quad \begin{pmatrix} 0 & 1\\
0 & 0\end{pmatrix},\quad \begin{pmatrix}
1& 0\\
0& -1\end{pmatrix}.$$

\subsubsection{Root-space decomposition of $\fk_{\bbC}$}
Let $\mathfrak{h}$ be a Cartan subalgebra of $\mathfrak{k}_{\bbC}$. Since $\mathfrak{k}_{\bbC}$ is a complex semisimple Lie algebra, it has a root-space decomposition with respect to $\mathfrak{h}$:
$$\mathfrak{k}_{\bbC}=\mathfrak{h}\oplus\bigoplus_{\alpha\in \Phi\left(\mathfrak{k}_{\bbC},\mathfrak{h}\right)}\mathfrak{k}_{\alpha},$$
where $\Phi=\Phi\left(\mathfrak{k}_{\bbC},\mathfrak{h}\right)$ is the corresponding set of roots, and for each $\alpha\in \Phi$ the root-space $\mathfrak{k}_{\alpha}$ is given by
$$\mathfrak{k}_{\alpha}:=\left\{X\in \mathfrak{k}_{\bbC}\ |\ [H, X]=\alpha\left(H\right)X\ \textrm{for any $H\in\fh$}\right\}.$$
Each root-space is one-dimensional and satisfies $[\mathfrak{k}_{\alpha},\mathfrak{k}_{\beta}]\subset \mathfrak{k}_{\alpha+\beta}$ for any $\alpha,\beta\in \Phi$. Fix a set of simple roots $\Delta$ and let $\Phi^{+}$  denote the corresponding set of positive roots. Then $\Phi=\Phi^{+}\cup \left(-\Phi^{+}\right)$. For backgrounds on complex semisimple Lie algebra, see \cite[Chaper II]{Ant}. In this subsection, we first give an explicit isomorphism between $K$ and $SO\left(n+1\right)$, then use this isomorphism and the classical root-space decomposition of $\mathfrak{so}\left(n+1,\bbC\right)$ to get an explicit root-space decomposition of $\fk_{\bbC}$.

Recall the identification
\begin{align*}
M\backslash K &\longrightarrow S^n:=\left\{x_0+x_1e_1+\cdots +x_ne_n\ |\ x_i\in\bbR, x_0^2+x_1^2+\cdots +x_n^2=1\right\}\\
\begin{pmatrix}
q_2' & -q_1'\\
q_1 & q_2\end{pmatrix}&\longmapsto \qquad\qquad\qquad 2\overline{q_1}q_2+ \left(|q_2|^2-|q_1|^2\right)e_n.
\end{align*}
Embed $S^n$ in $\mathbb{V}^n$ and fix an inner product on $\mathbb{V}^{n}$ such that $\{1,e_1,\cdots,e_n\}$ form a orthonormal basis of $\mathbb{V}^{n}$. Then the right regular action of $K$ on $M\backslash K=S^n$ induces an isomorphism from $K$ to $SO\left(n+1\right)$. In particular, it induces an isomorphism between $\fk_{\bbC}$ and $\mathfrak{so}\left(n+1,\bbC\right)$. Explicitly, for any $0\leq i<j\leq n$ define $L_{i,j}\in \mathfrak{k}_{\bbC}$ as following:
\begin{equation}
\label{laiso} L_{i,j}= \left\{
  \begin{array}{lr}
   -\frac12\begin{pmatrix}
e_ie_j & 0\\
0 & e_ie_j\end{pmatrix} & \textrm{if}  \ \ 1\leq i< j\leq n-1\\
   -\frac12\begin{pmatrix}e_j &0\\
0& -e_j\end{pmatrix} & \ \  \textrm{if} \ \ i=0, 1\leq j\leq n-1\\
  \frac12\begin{pmatrix} 0& e_i\\
e_i &0\end{pmatrix} & \textrm{if}  \ \ 1\leq i\leq n-1, j=n\\
  \frac12\begin{pmatrix}
0 & 1\\
-1 & 0\end{pmatrix} & \textrm{if} \ \ i=0, j=n.
  \end{array}
\right.
\end{equation}
By direct computation, the induced isomorphism from $\fk_{\bbC}$ to $\mathfrak{so}\left(n+1,\bbC\right)$ is given by sending $L_{i,j}$ to $E_{i,j}$ for any $0\leq i<j\leq n$, where $E_{i,j}$ is the antisymmetric $\left(n+1\right)\times \left(n+1\right)$ matrix with $\left(i,j\right)^{\textrm{th}}$ entry equals one, $\left(j,i\right)^{\textrm{th}}$ entry equals negative one and zero elsewhere. Using the classical commutator relations of $E_{i,j}$ we get the commutator relations of $L_{i,j}$. To ease the notation, we let $L_{i,i}=0$ for $0\leq i\leq n$ and  $L_{i,j}=-L_{j,i}$ for $0\leq j< i\leq n$. Explicitly, $L_{i,j}$ satisfy the following commutator relations
\begin{equation}
\label{comr} [L_{i,j}, L_{l,m}]=\delta_{jl}L_{i,m}-\delta_{il}L_{j,m}-\delta_{jm}L_{il}+ \delta_{im}L_{j,l}
\end{equation}
for any $0\leq i,j,l,m\leq n$, where $\delta_{ij}$ is the Kronecker symbol.
Moreover, using the root-space decomposition of
$\mathfrak{so}\left(n+1,\bbC\right)$ (see \cite[p. 127-129]{Ant}) we get the following root-space decomposition of $\mathfrak{k}_{\bbC}$ depending on the parity of $n+1$.

\subsubsection*{\textbf{Case I: $n+1= 2k+1$ is odd}}
For each $0\leq i\leq k-1$, let
$$H_i= \sqrt{-1}L_{2i, 2i+1}$$
with $L_{2i,2i+1}$ defined in $\left(\ref{laiso}\right)$. Let $\mathfrak{h}$ be the complex vector space spanned by the set $\left\{H_i\ |\ 0\leq i\leq k-1\right\}$. For each $0\leq i\leq k-1$, let $\varepsilon_i: \mathfrak{h}\to \bbC$ be the linear functional on $\mathfrak{h}$ characterised by $\varepsilon_{i}\left(H_j\right)=\delta_{ij}$. Using the above isomorphism between $\mathfrak{k}_{\bbC}$ and the root-space decomposition of $\mathfrak{so}\left(2k+1,\bbC\right)$, we know $\fh$ is a Cartan subalgebra and we can choose the set of simple roots to be
$$\Delta=\left\{\varepsilon_0-\varepsilon_1, \varepsilon_1-\varepsilon_2,\ldots,\varepsilon_{k-2}-\varepsilon_{k-1},\varepsilon_{k-1}\right\}.$$
The corresponding positive roots are given by
$$\Phi^{+}= \left\{\varepsilon_{i}\pm\varepsilon_{j}\ |\ 0\leq i< j\leq k-1\right\}\cup \left\{\varepsilon_l\ |\ 0\leq l\leq k-1\right\}.$$
Moreover, for any $0\leq i<j\leq k-1$ and $0\leq l\leq k-1$, the positive root-spaces $\fk_{\varepsilon_i\pm\varepsilon_j}$ and $\fk_{\varepsilon_l}$ are given as following:
\begin{equation}
\label{po}\fk_{\varepsilon_i\pm\varepsilon_j}=\bbC\left\langle \left(L_{2i,2j}-\sqrt{-1}L_{2i+1,2j}\right)\mp\left(L_{2i+1,2j+1}+\sqrt{-1}L_{2i,2j+1}\right)\right\rangle,
\end{equation}
and
\begin{equation}
\label{po2}\fk_{\varepsilon_l}=\bbC\left\langle L_{2l,2k}-\sqrt{-1}L_{2l+1,2k}\right\rangle.
\end{equation}

\subsubsection*{\textbf{Case II: $n+1=2k$ is even}} Similar to the odd case, for each $0\leq i\leq k-1$, let
$$H_i= \sqrt{-1}L_{2i, 2i+1}$$
and let $\mathfrak{h}$ be the complex vector space spanned by $\left\{H_i\ |\ 0\leq i\leq k-1\right\}$. For each $0\leq i\leq k-1$, denote $\varepsilon_i: \mathfrak{h}\to \bbC$ the linear functional on $\mathfrak{h}$ characterised by $\varepsilon_{i}\left(H_j\right)=\delta_{ij}$. The set of simple roots can be chosen to be
$$\Delta=\left\{\varepsilon_0-\varepsilon_1,\varepsilon_1-\varepsilon_2,\ldots, \varepsilon_{k-2}-\varepsilon_{k-1}, \varepsilon_{k-2}+\varepsilon_{k-1}\right\}.$$
The corresponding positive roots are given by
$$\Phi^{+}=\left\{\varepsilon_i\pm\varepsilon_j\ |\ 0\leq i<j\leq k-1\right\},$$
with $\fk_{\varepsilon_i\pm\varepsilon_j}$ also given by $\left(\ref{po}\right)$.
\begin{Rem}
The commutator relations and root-space decomposition above can both be checked directly using the relations $e_ie_j+e_je_i= -2\delta_{i,j}$ for any $1\leq i,j\leq n$.
\end{Rem}
\subsubsection{Spherical principal series representation}
Let $G=NAK$ be the fixed Iwasawa decomposition and $M$ be the centralizer of $A$ in $K$ as before. For any $s\in\bbC$, recall the function $\varphi_s$ on $NA$ defined by
$$\varphi_s\left(ua_t\right)= e^{st}$$
for any $u\in N$ and $a_t\in A$. Consider the corresponding spherical principal series representation $I^s=\textrm{Ind}_{NAM}^{G}\left(\varphi_s\otimes 1_{M}\right)$, where $1_{M}$ is the trivial representation of $M$. Elements in $I^s$ are measurable functions $f: G\to \bbC$ satisfying
\begin{equation}
\label{prser} f\left(uawg\right)=\varphi_s\left(a\right)f\left(g\right)\ \textrm{for $\sigma$-a.e. $g\in G$, with $u\in N, a\in A$ and $w\in M$}.
\end{equation}
$G$ acts on $I^s$ by right regular action. We note that due to condition $\left(\ref{prser}\right)$, $f\in I^s$ is a function on $A\times M\backslash K$, thus by the identification between $M\backslash K$ and $S^n$, $f$ is a functions in coordinates $\left(t,x_0,x_1,\cdots,x_n\right)$ with the restriction $x_0^2+x_1^2\cdots+x_n^2=1$.

Let $I^s_{\infty}$ be the space of smooth functions in $I^s$. We note that $I^s_{\infty}$ is a dense subspace of $I^s$ and the right regular action of $G$ on $I^s$ induces a $\fg$-module structure on $I^s_{\infty}$: For any $X\in \fg$ and any $f\in I^s_{\infty}$, define the Lie derivative, $\pi\left(X\right)$, by
$$\left(\pi\left(X\right)f\right)\left(g\right)= \frac{d}{dy}f\left(g\exp\left(yX\right)\right)\bigg|_{y=0}.$$
We note that the Lie derivative respects the Lie bracket, that is, $[\pi\left(X\right),\pi\left(Y\right)]=\pi\left([X,Y]\right)$ for any $X,Y\in \mathfrak{g}$, where the first Lie bracket is the Lie bracket of endomorphisms. Since functions in $I^s_{\infty}$ are complex-valued, we can complexify the Lie derivative by defining $$\pi\left(X+\sqrt{-1}Y\right):=\pi\left(X\right)+\sqrt{-1}\pi\left(Y\right)$$ for any $X,Y\in\fg$. Thus $I^s_{\infty}$ becomes a $\fg_{\bbC}$-module. In particular, $I^s_{\infty}$ is also a $\fk_{\bbC}$-module. Let $\fh$ and $\Phi^{+}$ be as before. Let $\fh^{\ast}$ denote the complex dual of $\fh$. Given a $\fk_{\bbC}$-module $V$ and $\rho\in \fh^{\ast}$, we say $v\in V$ is of $K$-weight $\rho$ if $H\cdot v=\rho\left(H\right)v$ for any $H\in\fh$. We say $v\in V$ is a highest weight vector if $v$ is of $K$-weight $\rho$ for some $\rho\in \fh^{\ast}$ and $X\cdot v=0$ for any $\alpha\in \Phi^+$ and any $X\in \fk_{\alpha}$. We note that every irreducible representation of $K$ is a finite-dimensional irreducible $\fk_{\bbC}$-module by differentiating the group action at the identity, and every finite-dimensional irreducible $\fk_{\bbC}$-module admits a unique (up to scalars) highest weight vector (see \cite[Theorem 5.5 (b)]{Ant}).

Due to condition $\left(\ref{prser}\right)$, $I^s$ is isomorphic to $L^2\left(M\backslash K\right)$ as $K$-representations by sending $f$ to $f|_K$. Identify $M\backslash K$ with $S^n$ as above, we have the following decomposition of $L^2\left(M\backslash K\right)$ as $K$-representations:
$$L^2\left(M\backslash K\right)=\hat{\bigoplus_{m\geq0}}L^2\left(M\backslash K, m\right),$$
where $L^2\left(M\backslash K, m\right)$ is the space of degree $m$ harmonic polynomials in $n+1$ variables restricted to $S^n$ (see \cite[Corollary 5.0.3]{Pau}) and $\hat{\bigoplus}$ denote the Hilbert direct sum.  Moreover, let $\mathcal{H}^m$ be the space of degree $m$ harmonic polynomials in coordinates $\left(x_0,x_1,\cdots, x_n\right)\in\mathbb{V}^{n}$. Then $\mathcal{H}^m$ is an irreducible $K$-representation and is isomorphic to $L^2\left(M\backslash K, m\right)$ via the map $\phi\mapsto\phi|_{S^n}$ (\cite[Theorem 0.3 and 0.4]{TTT}). Finally, we note that $\left(x_0-\sqrt{-1}x_1\right)^m\in\mathcal{H}^m$ is of $K$-weight $m\varepsilon_0$ (\cite[p. 277-278]{Ant}) and $\mathcal{H}^m$ is of highest weight $m\varepsilon_0$ (\cite[p. 339 Problem 9.2]{Ant}). Hence $\left(x_0-\sqrt{-1}x_1\right)^m$ is the unique (up to scalars) highest weight vector in $\mathcal{H}^m$.

Correspondingly, let $I_{\infty}^s\left(K,m\right):=\left\{f\in I_{\infty}^s\ |\ f|_K\in L^2\left(M\backslash K,m\right)\right\}$. Then we have a decomposition of $I^s_{\infty}$
$$I^s_{\infty}=\bigoplus_{m=0}^{\infty}I_{\infty}^s\left(K,m\right).$$ Moreover, $I_{\infty}^s\left(K,m\right)$ is an irreducible $\fk_{\bbC}$-module of highest weight $m\varepsilon_0$, and the highest weight vector is given by
$$\varphi_{s,m}\left(t,x_0, x_1,\ldots,x_n\right):= e^{st}\left(x_0-\sqrt{-1}x_1\right)^m.$$

Now we define the raising operator $R^{+}\in \fg_{\bbC}$ by
\begin{align*}
R^{+}&=\frac12\pi\left(\begin{pmatrix}0 & -1+\sqrt{-1}e_1\\
-1-\sqrt{-1}e_1 & 0\end{pmatrix}\right)\\
&= -\frac12\pi\left(\begin{pmatrix}
0 & 1\\
1 & 0\end{pmatrix}\right)+\frac{\sqrt{-1}}{2}\pi\left(\begin{pmatrix}
0 & e_1\\
-e_1 & 0\end{pmatrix}\right).
\end{align*}
To compute $R^{+}$ explicitly, we use the spherical coordinates on $S^n$: Let $\left(x_0,x_1,\ldots,x_n\right)$ be the coordinates on $S^n$ as above, define $\left(\theta_0,\theta_1,\ldots,\theta_{n-1}\right)\in [0,2\pi]^{n-1}\times [0,\pi)$ such that
\begin{align*}
x_0&=\cos\theta_0,\\
x_1&=\sin\theta_0\cos\theta_1,\\
 &\ \ \vdots \\
x_{n-1}&=\sin\theta_0\cdots\sin\theta_{n-2}\cos\theta_{n-1}, \\ x_n&=\sin\theta_0\cdots\sin\theta_{n-2}\sin\theta_{n-1}.
\end{align*}
Hence under the coordinates $\left(t,\theta_i\right)$, $\varphi_{s,m}$ is given by
$$\varphi_{s,m}\left(t,\theta_i\right)= e^{st}\left(\cos\theta_0-\sqrt{-1}\sin\theta_0\cos\theta_1\right)^m.$$
Moreover, in these coordinates, for any $X\in \fg_{\bbC}$, the Lie derivative $\pi\left(X\right)$ is a first order differential operator of the form
$$\pi\left(X\right)= F\frac{\partial}{\partial t}+\sum_{i=0}^{n-1}F_i\frac{\partial}{\partial \theta_i},$$
where $F, F_i$ are functions in $\left(t,\theta_i\right)$. For our purpose, we define $$\widetilde{\pi\left(X\right)}:=F\frac{\partial}{\partial t}+ F_0\frac{\partial}{\partial \theta_0}+ F_1\frac{\partial}{\partial \theta_1}.$$
Since $\varphi_{s,m}$ only depends on the variables $\left(t,\theta_0,\theta_1\right)$, $\pi\left(X\right)\varphi_{s,m}=\widetilde{\pi\left(X\right)}\varphi_{s,m}$
for any $X\in \fg_{\bbC}$.

Now we describe the strategy to compute the Lie derivatives. We first show how to extract the coordinates $\left(t,\theta_i\right)$ from a given element $g=\begin{pmatrix}
a & b\\
c& d\end{pmatrix}\in G$. Write
\begin{equation}
\label{coord}\begin{pmatrix}
a & b\\
c &d\end{pmatrix}=\begin{pmatrix}
1 & u\\
0 & 1\end{pmatrix}\begin{pmatrix}
e^{\frac{t}{2}} & 0\\
0 & e^{-\frac{t}{2}}\end{pmatrix}\begin{pmatrix}
q_2' & -q_1' \\
q_1 & q_2\end{pmatrix}
\end{equation}
by Iwasawa decomposition. Comparing the second row of the matrices on both sides, we get
\begin{equation}
\label{extract 1}e^{-t}=|c|^2+|d|^2
\end{equation}
and
\begin{equation}
\label{extract 2}x_0+x_1e_1+\cdots+x_ne_n=\frac{2\bar{c}d+\left(|d|^2-|c|^2\right)e_n}{|c|^2+|d|^2},
\end{equation}
where $x_i$ are expressed by $\theta_i$ as above. Fix an element $g\in G$. For any $X\in\fg$, the coordinates $\left(t,\theta_i\right)$ of $g\exp\left(yX\right)$ can be viewed as functions in $y$ as $y$ varies. Denote $\left(t\left(y\right),\theta\left(y\right)\right)$ to indicate this dependence on $y$. Then the Lie derivative $\widetilde{\pi\left(X\right)}$ is exactly given by $$\widetilde{\pi\left(X\right)}=t'\left(0\right)\frac{\partial}{\partial t}+ \theta'_0\left(0\right)\frac{\partial}{\partial \theta_0}+ \theta_1'\left(0\right)\frac{\partial}{\partial \theta_1}.$$

\begin{Lem}
\label{ecompu}Let $B_1=\begin{pmatrix}
0 & 1\\
1 & 0\end{pmatrix}$ and $B_2=\begin{pmatrix}
0 & e_1\\
-e_1 & 0\end{pmatrix}$. Then
$$\widetilde{\pi\left(B_1\right)}=-2\cos\theta_0\frac{\partial}{\partial t}-2\sin\theta_0\frac{\partial}{\partial \theta_0}$$
and
$$\widetilde{\pi\left(B_2\right)}=-2\sin\theta_0\cos\theta_1\frac{\partial}{\partial t}+2\cos\theta_0\cos\theta_1\frac{\partial}{\partial \theta_0}-2\frac{\sin\theta_1}{\sin\theta_0}\frac{\partial}{\partial \theta_1}.$$
\end{Lem}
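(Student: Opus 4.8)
The plan is to carry out, for each of the two matrices $B_1,B_2\in\fg$, the strategy outlined just above the lemma: fix $g=\begin{pmatrix}a&b\\c&d\end{pmatrix}\in G$, compute the bottom row of $g\exp\left(yX\right)$, feed it into the coordinate-extraction formulas (\ref{extract 1}) and (\ref{extract 2}), and differentiate at $y=0$ to obtain the coefficients $t'(0),\theta_0'(0),\theta_1'(0)$ of $\widetilde{\pi\left(X\right)}=t'(0)\frac{\partial}{\partial t}+\theta_0'(0)\frac{\partial}{\partial\theta_0}+\theta_1'(0)\frac{\partial}{\partial\theta_1}$. \emph{Step 1 (the one-parameter subgroups).} Since $e_1^2=-1$ one checks $B_1^2=B_2^2=I_2$, so $\exp\left(yB_i\right)=\cosh(y)I_2+\sinh(y)B_i$. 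Multiplying $g$ on the right, the bottom row $\left(c(y),d(y)\right)$ of $g\exp\left(yB_1\right)$ is $\left(c\cosh y+d\sinh y,\ c\sinh y+d\cosh y\right)$ and that of $g\exp\left(yB_2\right)$ is $\left(c\cosh y-de_1\sinh y,\ ce_1\sinh y+d\cosh y\right)$; in particular $\left(c'(0),d'(0)\right)$ equals $(d,c)$ for $B_1$ and $(-de_1,ce_1)$ for $B_2$.

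\emph{Step 2 (the $t$-coordinate).} By (\ref{extract 1}), $e^{-t(y)}=c(y)\overline{c(y)}+d(y)\overline{d(y)}$. Differentiating at $y=0$, and using $v\bar v=\bar v v=|v|^2$ together with the fact that the scalar part on $C_n$ is invariant under conjugation and under cyclic permutation, $\frac{d}{dy}e^{-t(y)}\big|_{y=0}$ becomes a multiple of the scalar part of $\overline{c}d$ (for $B_1$) and of $e_1\overline{c}d$ (for $B_2$). These scalar parts are read off directly from (\ref{extract 2}) — whose $e_n$-term contributes to neither the scalar nor the $e_1$-coefficient — yielding $\frac{d}{dy}e^{-t(y)}\big|_{y=0}=2x_0e^{-t}$ for $B_1$ and $=2x_1e^{-t}$ for $B_2$. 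Hence $t'(0)=-2x_0=-2\cos\theta_0$ for $B_1$ and $t'(0)=-2x_1=-2\sin\theta_0\cos\theta_1$ for $B_2$, which are the asserted $\frac{\partial}{\partial t}$-coefficients.

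\emph{Step 3 (the angular coordinates).} Write (\ref{extract 2}) as $x_0(y)+x_1(y)e_1+\cdots+x_n(y)e_n=N(y)/D(y)$ with $D(y)=e^{-t(y)}$ and $N(y)=2\overline{c(y)}d(y)+\left(|d(y)|^2-|c(y)|^2\right)e_n$. By Step 1 the $e_n$-term of $N$ has vanishing derivative at $y=0$, and $N'(0)=2e^{-t}$ for $B_1$, $N'(0)=2e^{-t}e_1$ for $B_2$; the quotient rule then gives
\[
\left(x_0'(0),x_1'(0)\right)=\left(2-2x_0^2,\ -2x_0x_1\right)\ \text{for }B_1,\qquad \left(x_0'(0),x_1'(0)\right)=\left(-2x_0x_1,\ 2-2x_1^2\right)\ \text{for }B_2.
\]
Differentiating the relations $x_0=\cos\theta_0$ and $x_1=\sin\theta_0\cos\theta_1$ to solve for $\theta_0'(0)$ and $\theta_1'(0)$, and simplifying with $x_0^2+x_1^2+\cdots+x_n^2=1$, one obtains $\theta_0'(0)=-2\sin\theta_0,\ \theta_1'(0)=0$ for $B_1$ and $\theta_0'(0)=2\cos\theta_0\cos\theta_1,\ \theta_1'(0)=-2\sin\theta_1/\sin\theta_0$ for $B_2$. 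Collecting the three coefficients gives the two displayed formulas.

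\emph{Main obstacle.} The computation is elementary, so the only real work is bookkeeping in the noncommutative Clifford algebra $C_n$: one must carefully use $\overline{e_1}=-e_1$ and $\overline{xy}=\bar y\bar x$, keep $v\bar v=\bar v v=|v|^2$ in mind, and repeatedly isolate the scalar- and $e_1$-parts of Clifford-valued expressions (using, for instance, that $e_1$ times any basis monomial other than $e_1$ itself has zero scalar part). A minor point is that the spherical coordinates $\left(\theta_i\right)$ degenerate on a lower-dimensional set — visible in the $1/\sin\theta_0$ factor — so the identities are derived on the open dense chart where these coordinates are valid, while $\pi\left(B_i\right)$ is itself a globally defined smooth left-invariant vector field on $G$.
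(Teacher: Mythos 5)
Your proposal is correct and follows essentially the same strategy as the paper's proof: compute the bottom row of $g\exp\left(yB_i\right)$, feed it into $\left(\ref{extract 1}\right)$ and $\left(\ref{extract 2}\right)$, and differentiate at $y=0$. The only cosmetic difference is that the paper first writes out the full $y$-dependence of $e^{-t(y)}$, $\cos\theta_0(y)$, and $\sin\theta_0(y)\cos\theta_1(y)$ before differentiating, whereas you differentiate the implicit relations directly at $y=0$ via the product and quotient rules — the intermediate quantities ($x_0'(0)$, $x_1'(0)$) and the Clifford-algebra bookkeeping are the same.
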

\begin{proof}
Using the formula $\exp\left(yB_1\right)=\sum_{i=0}^{\infty}\frac{\left(yB_1\right)^i}{i!}$ we get $$\exp\left(yB_1\right)=\begin{pmatrix}
\cosh y & \sinh y\\
\sinh y & \cosh y\end{pmatrix}.$$
Thus
$$\begin{pmatrix}
a & b\\
c &d\end{pmatrix}\begin{pmatrix}
\cosh y & \sinh y\\
\sinh y & \cosh y\end{pmatrix}=\begin{pmatrix}
\star &\star \\
c\cosh y +d\sinh y & c\sinh y + d\cosh y \end{pmatrix}.$$
Using $\left(\ref{extract 1}\right)$ we get
$$e^{-t\left(y\right)}=|\cosh y +d\sinh y|^2+|c\sinh y + d\cosh y|^2=e^{-t}\left(\cosh\left(2y\right)+\sinh\left(2y\right)\cos\theta_0\right).$$
Taking derivatives with respect to $y$ and evaluating at $0$ on both sides we get
$$t'\left(0\right)=-2\cos\theta_0.$$
Similarly, using $\left(\ref{extract 2}\right)$ and comparing the constant term and coefficient of $e_1$, we get
$$\cos\left(\theta_0\left(y\right)\right)= \frac{\sinh\left(2y\right)+\cosh\left(2y\right)\cos\theta_0}{\cosh\left(2y\right)+\sinh\left(2y\right)\cos\theta_0}$$
and
$$\sin\left(\theta_0\left(y\right)\right)\cos\left(\theta_1\left(y\right)\right)=\frac{\sin\theta_0\cos\theta_1}{\cosh\left(2y\right)+\sinh\left(2y\right)\cos\theta_0}.$$
Taking derivatives with respect to $y$ and evaluating at $0$ we get
$$\theta_0'\left(0\right)= -2\sin\theta_0\quad\textrm{and}\quad \theta_1'\left(0\right)= 0.$$
Thus $\widetilde{\pi\left(B_1\right)}=-2\cos\theta_0\frac{\partial}{\partial t}-2\sin\theta_0\frac{\partial}{\partial \theta_0}$.

Similarly, for $B_2$ we have $\exp\left(yB_2\right)=\begin{pmatrix}
\cosh y & \sinh y e_1\\
-\sinh y e_1& \cosh y\end{pmatrix}$ and
$$\begin{pmatrix}
a & b\\
c &d\end{pmatrix}\begin{pmatrix}
\cosh y & \sinh y e_1\\
-\sinh y e_1& \cosh y\end{pmatrix}=\begin{pmatrix}
\star &\star \\
c\cosh y-de_1\sinh y& ce_1\sinh y+ d\cosh y\end{pmatrix}.$$
Using $\left(\ref{extract 1}\right)$ and $\left(\ref{extract 2}\right)$, after some tedious but straightforward computations we get
\begin{align*}
e^{-t\left(y\right)}&= e^{-t}\left(\cosh\left(2y\right)+\sinh\left(2y\right)\sin\theta_0\cos\theta_1\right),\\
\cos\left(\theta_0\left(y\right)\right)&=\frac{\cos\theta_0}{\cosh\left(2y\right)+\sinh\left(2y\right)\sin\theta_0\cos\theta_1},
\end{align*}
and
$$\sin\left(\theta_0\left(y\right)\right)\cos\left(\theta_1\left(y\right)\right)=\frac{\sinh\left(2y\right)+\cosh\left(2y\right)\sin\theta_0\cos\theta_1}{\cosh\left(2y\right)+\sinh\left(2y\right)\sin\theta_0\cos\theta_1}.$$
Hence by taking derivatives with respect to $y$ and evaluating at $0$ we get
$$t'\left(0\right)= -2\sin\theta_0\cos\theta_1,\quad \theta_0'\left(0\right)= 2\cos\theta_0\cos\theta_1\quad \textrm{and}\quad \theta_1'\left(0\right)= -2\frac{\sin\theta_1}{\sin \theta_0}.$$
Hence $\widetilde{\pi\left(B_2\right)}=-2\sin\theta_0\cos\theta_1\frac{\partial}{\partial t}+2\cos\theta_0\cos\theta_1\frac{\partial}{\partial \theta_0}-2\frac{\sin\theta_1}{\sin\theta_0}\frac{\partial}{\partial \theta_1}$.
\end{proof}

In view of Lemma \ref{ecompu} we get
$$\widetilde{R^{+}}= \left(\cos\theta_0-\sqrt{-1}\sin\theta_0\cos\theta_1\right)\frac{\partial}{\partial t}+ \left(\sin\theta_0+\sqrt{-1}\cos\theta_0\cos\theta_1\right)\frac{\partial}{\partial \theta_0}-\sqrt{-1}\frac{\sin\theta_1}{\sin\theta_0}\frac{\partial}{\partial \theta_1}.$$
Since $R^{+}\varphi_{s,m}=\widetilde{R^{+}}\varphi_{s,m}$, applying $\widetilde{R^{+}}$ to $\varphi_{s,m}$ we get
\begin{equation}
\label{raising}R^{+}\varphi_{s,m}=\left(s+m\right)\varphi_{s,m+1}.
\end{equation}

\begin{Rem}
We note that using the explicit root-space decomposition described as above, one can directly check that the raising operator $R^{+}$ (more explicitly, the matrix representing $R^+$) satisfies
\begin{equation}
\label{1} [H, R^{+}]= \varepsilon_0\left(H\right)R^{+}\quad \textrm{and}\quad [R^{+}, \fk_{\alpha}]= 0
\end{equation}
for any $H\in \mathfrak{h}$ and any $\alpha\in \Phi^{+}$. The first part of $\left(\ref{1}\right)$ implies that $R^{+}$ sends a vector of $K$-weight $\rho$ to a vector of $K$-weight $\rho+ \varepsilon_0$ and the second part of $\left(\ref{1}\right)$ implies that $R^{+}$ sends a highest weight vector to either zero or another highest weight vector. Since $\varphi_{s,m}$ is a highest weight vector of $K$-weight $m\varepsilon_0$, $R^{+}\varphi_{s,m}$ is either zero or a highest weight vector of $K$-weight $\left(m+1\right)\varepsilon_0$. But since $I_{\infty}^s=\oplus_{m=0}^{\infty} I_{\infty}^s\left(K,m\right)$ and each $I_{\infty}^s\left(K,m\right)$ has a unique (up to scalars) highest weight vector $\varphi_{s,m}$, the set of highest weight vectors in $I_{\infty}^s$ is exactly $\{\varphi_{s,m}\ |\ m\geq 0\}$. Thus $R^{+}\varphi_{s,m}$ is a multiple of $\varphi_{s,m+1}$. In fact, $\left(\ref{1}\right)$ is the characterization we used to find $R^{+}$. However, once we have found $R^{+}$, $\left(\ref{1}\right)$ is no longer essential for our proof, since we get $\left(\ref{raising}\right)$ (which trivially implies that $R^{+}\varphi_{s,m}$ is a multiple of $\varphi_{s,m+1}$) by explicit computation.
\end{Rem}

\subsection{Non-spherical Eisenstein series.}
Given $\phi\in L^2\left(M\backslash K,m\right)$, we view $\phi$ as a function on $G$ by setting $\phi\left(uak\right)=\phi\left(k\right)$ for any $u\in N$ and any $a\in A$. We define the non-spherical Eisenstein series by
$$E\left(\phi, s, g\right)= \sum_{\gamma\in \Gamma_{\infty}\backslash \Gamma}\varphi_s\left(\gamma g\right)\phi\left(\gamma g\right).$$
Note that $E\left(\phi, s, g\right)$ is no longer right $K$-invariant. Its constant term (corresponding to the cusp at $\infty$) is defined by
$$E^0\left(\phi, s, g\right)= \frac{1}{|\mathcal{F}_{\mathcal{O}_{\Gamma}}|}\int_{\textbf{x}\in \mathcal{F}_{\mathcal{O}_{\Gamma}}}E\left(\phi, s, u_{\textbf{x}}g\right)d\textbf{x}.$$
Now using $\left(\ref{raising}\right)$ we can get an explicit formula for $E^0\left(\phi, s, g\right)$.
\begin{Prop}
\label{const term} For any $\phi\in L^2\left(M\backslash K, m\right)$,
\begin{equation}
\label{nsconst}E^0\left(\phi, s, g\right)= \left(\varphi_s\left(g\right)+ P_m\left(s\right)\mathcal{C}\left(s\right)\varphi_{n-s}\left(g\right)\right)\phi\left(g\right),
\end{equation}
where $P_0\left(s\right)=1$ and $P_m\left(s\right)= \prod_{k=0}^{m-1}\frac{n-s+k}{s+k}$ for $m\geq 1$.
\end{Prop}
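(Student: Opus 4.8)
The plan is to prove $\left(\ref{nsconst}\right)$ first for the distinguished highest weight vector $\phi=\left(x_0-\sqrt{-1}x_1\right)^m$ of $\mathcal{H}^m\cong L^2\left(M\backslash K,m\right)$ by induction on $m$ using the raising operator, and then to deduce the general case from the $K$-irreducibility of $L^2\left(M\backslash K,m\right)$. Note that for this $\phi$ one has $\varphi_s\cdot\phi=\varphi_{s,m}$, so that $E\left(\phi,s,g\right)=\sum_{\gamma\in\Gamma_{\infty}\backslash\Gamma}\varphi_{s,m}\left(\gamma g\right)$.

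\emph{Reduction to the highest weight vector.} Let $\rho$ denote the right regular action of $K$ on $L^2\left(M\backslash K\right)$, so $\left(\rho\left(k_0\right)\phi\right)\left(k\right)=\phi\left(kk_0\right)$; it preserves each $L^2\left(M\backslash K,m\right)$. Since $\varphi_s$ and $\varphi_{n-s}$ are right $K$-invariant and the Iwasawa $K$-component of a product $gk_0$ is that of $g$ right-multiplied by $k_0$, a direct computation from the definitions shows that both $g\mapsto E^0\left(\phi,s,g\right)$ and $g\mapsto\left(\varphi_s\left(g\right)+P_m\left(s\right)\mathcal{C}\left(s\right)\varphi_{n-s}\left(g\right)\right)\phi\left(g\right)$ obey $F_{\rho\left(k_0\right)\phi}\left(g\right)=F_{\phi}\left(gk_0\right)$ for all $k_0\in K$. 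Hence, once $\left(\ref{nsconst}\right)$ is known for one nonzero $\phi_0\in L^2\left(M\backslash K,m\right)$, it holds for every $K$-translate $\rho\left(k_0\right)\phi_0$; as $L^2\left(M\backslash K,m\right)\cong\mathcal{H}^m$ is a finite-dimensional irreducible $K$-module the translates of $\phi_0$ span it, and since both sides of $\left(\ref{nsconst}\right)$ are $\bbC$-linear in $\phi$ this reduces the proof to the highest weight case.

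\emph{Induction on $m$ for $\phi=\left(x_0-\sqrt{-1}x_1\right)^m$.} The base case $m=0$ is the recalled spherical constant-term formula $E^0\left(s,g\right)=\varphi_s\left(g\right)+\mathcal{C}\left(s\right)\varphi_{n-s}\left(g\right)$, since then $\phi\equiv1$, $P_0\left(s\right)=1$, and $E\left(\phi,s,g\right)$ is the spherical Eisenstein series. For the inductive step, work in the half-plane $\textrm{Re}\left(s\right)>n$ of absolute convergence. The Lie derivative $\pi\left(R^{+}\right)$ is a left-$G$-invariant differential operator, so it commutes with the left $\Gamma$-translations occurring in the sum and with the integration over $\mathcal{F}_{\mathcal{O}_{\Gamma}}$ defining the constant term; term-by-term differentiation of the sum is legitimate because $\left|\varphi_{s,m}\right|\le\varphi_{\textrm{Re}\left(s\right)}$ on $S^n$ and, by $\left(\ref{raising}\right)$, $\pi\left(R^{+}\right)\varphi_{s,m}=\left(s+m\right)\varphi_{s,m+1}$ is again bounded by a multiple of $\varphi_{\textrm{Re}\left(s\right)}$, so both series converge locally uniformly. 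Using $\left(\ref{raising}\right)$ we therefore get
$$\pi\left(R^{+}\right)E^0\!\left(\left(x_0-\sqrt{-1}x_1\right)^m,s,g\right)=\left(s+m\right)E^0\!\left(\left(x_0-\sqrt{-1}x_1\right)^{m+1},s,g\right).$$
On the other hand, applying $\pi\left(R^{+}\right)$ to the inductive hypothesis $E^0\!\left(\left(x_0-\sqrt{-1}x_1\right)^m,s,g\right)=\varphi_{s,m}\left(g\right)+P_m\left(s\right)\mathcal{C}\left(s\right)\varphi_{n-s,m}\left(g\right)$ and using $\left(\ref{raising}\right)$ with parameter $s$ and with parameter $n-s$ yields
$$\pi\left(R^{+}\right)E^0\!\left(\left(x_0-\sqrt{-1}x_1\right)^m,s,g\right)=\left(s+m\right)\varphi_{s,m+1}\left(g\right)+\left(n-s+m\right)P_m\left(s\right)\mathcal{C}\left(s\right)\varphi_{n-s,m+1}\left(g\right).$$
Equating these and dividing by $s+m\ne0$ gives $\left(\ref{nsconst}\right)$ for $m+1$, because $\frac{n-s+m}{s+m}P_m\left(s\right)=P_{m+1}\left(s\right)$ and $\varphi_{\mu,m+1}=\varphi_{\mu}\cdot\left(x_0-\sqrt{-1}x_1\right)^{m+1}$ for $\mu=s,n-s$.

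\emph{Conclusion and main obstacle.} This establishes $\left(\ref{nsconst}\right)$ for $\textrm{Re}\left(s\right)>n$, whence it extends to all $s$ by meromorphic continuation of both sides (the continuation of the non-spherical Eisenstein series, and hence of its constant term, being standard in rank one). The argument is largely formal: its only structural inputs are the single identity $\left(\ref{raising}\right)$ and the fact from the previous subsection that $\left(x_0-\sqrt{-1}x_1\right)^m$ generates the $K$-type $\mathcal{H}^m\cong L^2\left(M\backslash K,m\right)$. The one point requiring care — and hence the main obstacle — is justifying the interchange of $\pi\left(R^{+}\right)$ with the Eisenstein sum and with the constant-term integral; this is routine once one exploits $\left(\ref{raising}\right)$ and the elementary bound $\left|x_0-\sqrt{-1}x_1\right|\le1$ on $S^n$.
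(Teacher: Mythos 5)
Your proposal is correct and follows essentially the same route as the paper: the base case is the spherical constant-term formula, and the inductive step applies the raising operator and identity $\left(\ref{raising}\right)$ to the constant term, using that the Lie derivative commutes with the left translations and the unipotent integration, to pass from $m$ to $m+1$. The only (immaterial) difference is the reduction from the highest weight vector to general $\phi$: you use right $K$-translates of $\phi_0$ together with irreducibility of $L^2\left(M\backslash K,m\right)$, whereas the paper writes $\phi=\mathcal{D}h_m$ for a differential operator $\mathcal{D}$ generated by $\pi\left(\fk_{\bbC}\right)$ and applies $\mathcal{D}$ to the established formula — the infinitesimal version of your argument.
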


\begin{proof}
For any $m\geq 0$, let $h_m$ be the highest weight vector in $L^2\left(M\backslash K,m\right)$. We first prove $\left(\ref{nsconst}\right)$ for $h_m$. We prove by induction. If $m=0$, then $\left(\ref{nsconst}\right)$ follows from the constant term formula for spherical Eisenstein series. Assume that it holds for some integer $m\geq 0$, we want to show it also holds for $m+1$. We apply the raising operator $R^{+}$ to the constant term $E^0\left(h_m, s, g\right)$. On the one hand, by induction,
$$E^0\left(h_m, s, g\right)= \varphi_{s,m}\left(g\right)+ P_m\left(s\right)\mathcal{C}\left(s\right)\varphi_{n-s,m}\left(g\right).$$
Hence by $\left(\ref{raising}\right)$ we get
$$R^{+}E^0\left(h_m, s, g\right)= \left(s+ m\right)\varphi_{s,m+1}\left(g\right)+ \left(n-s+ m\right)P_m\left(s\right)\mathcal{C}\left(s\right)\varphi_{n-s, m+1}\left(g\right).$$
On the other hand, since $R^{+}$ commutes with the left regular action, we have
\begin{align*}
R^{+}E^0\left(h_m, s, g\right)&= \frac{1}{|\mathcal{F}_{\mathcal{O}_{\Gamma}}|}\int_{\textbf{x}\in \mathcal{F}_{\mathcal{O}_{\Gamma}}}\sum_{\gamma\in \Gamma_{\infty}\backslash \Gamma}R^+\varphi_{s,m}\left(\gamma u_{\textbf{x}}g\right)d\textbf{x}\\
&= \frac{s+m}{|\mathcal{F}_{\mathcal{O}_{\Gamma}}|}\int_{\textbf{x}\in \mathcal{F}_{\mathcal{O}_{\Gamma}}}\sum_{\gamma\in \Gamma_{\infty}\backslash \Gamma}\varphi_{s,m+1}\left(\gamma u_{\textbf{x}}g\right)d\textbf{x}\\
&= \left(s+m\right)E^0\left(h_{m+1},s,g\right).
\end{align*}
Hence
\begin{align*}
E^0\left(h_{m+1},s,g\right)&= \frac{1}{s+m}\left(\left(s+ m\right)\varphi_{s,m+1}\left(g\right)+ \left(n-s+ m\right)P_m\left(s\right)\mathcal{C}\left(s\right)\varphi_{n-s, m+1}\left(g\right)\right)\\
                   &= \varphi_{s,m+1}\left(g\right)+ \frac{n-s+m}{s+ m}P_m\left(s\right)\mathcal{C}\left(s\right)\varphi_{n-s, m+1}\left(g\right)\\
                   &= \varphi_{s,m+1}\left(g\right)+ P_{m+1}\left(s\right)\mathcal{C}\left(s\right)\varphi_{n-s, m+1}\left(g\right).
\end{align*}
Now for general $\phi\in L^2\left(M\backslash K,m\right)$. Since $L^2\left(M\backslash K,m\right)$ is an irreducible $\fk_{\bbC}$-module, $\phi$ can be written as $\phi=\mathcal{D}h_m$ with $\mathcal{D}$ some differential operator on $L^2\left(M\backslash K,m\right)$ generated by $\pi\left(\fk_{\bbC}\right)$. Since $\pi\left(\fk_{\bbC}\right)$ acts trivially on the character $\varphi_s$, we have $\mathcal{D}\varphi_{s,m}=\varphi_s\mathcal{D}h_m= \varphi_s\phi$. Hence on the one hand,
\begin{align*}
\mathcal{D}E^0\left(h_m, s, g\right)&= \frac{1}{|\mathcal{F}_{\mathcal{O}_{\Gamma}}|}\int_{\textbf{x}\in \mathcal{F}_{\mathcal{O}_{\Gamma}}}\sum_{\gamma\in \Gamma_{\infty}\backslash \Gamma}\mathcal{D}\varphi_{s,m}\left(\gamma u_{\textbf{x}}g\right)d\textbf{x}\\
&= \frac{1}{|\mathcal{F}_{\mathcal{O}_{\Gamma}}|}\int_{\textbf{x}\in \mathcal{F}_{\mathcal{O}_{\Gamma}}}\sum_{\gamma\in \Gamma_{\infty}\backslash \Gamma}\varphi_s\left(\gamma u_{\textbf{x}}g\right)\phi\left(\gamma u_{\textbf{x}}g\right)d\textbf{x}\\
&= E^0\left(\phi,s,g\right).
\end{align*}
On the other hand, using $\left(\ref{nsconst}\right)$ for $h_m$ we get
\begin{align*}
\mathcal{D}E^0\left(h_m,s,g\right)&=\mathcal{D}\left(\left(\varphi_s\left(g\right)+ P_m\left(s\right)\mathcal{C}\left(s\right)\varphi_{n-s}\left(g\right)\right)h_m\left(g\right)\right)\\
&=\left(\varphi_s\left(g\right)+ P_m\left(s\right)\mathcal{C}\left(s\right)\varphi_{n-s}\left(g\right)\right)\phi\left(g\right).
\end{align*}
This completes the proof.
\end{proof}

\section{Bounds for incomplete Eisenstein series}
\subsection{Explicit formula.}
For each $L^2\left(M\backslash K, m\right)$ we fix an orthonormal basis
$$\left\{\psi_{m,l}\ |\ 1\leq l\leq \textrm{dim}\ L^2\left(M\backslash K, m\right)\right\}.$$
For any $f\in C_{c}^{\infty}\left(Q\backslash G\right)$ let
$$\hat{f}_{m,l}\left(a\right)= \int_K f\left(ak\right)\overline{\psi_{m,l}\left(k\right)}dk,$$ and we define the following function
$$M_f\left(s\right)= \sum_{m,l}P_m\left(s\right)\big|\int_{\bbR}\hat{f}_{m,l}\left(a_t\right)e^{-st}dt\big|^2,$$
with $P_m\left(s\right)$ as in Proposition \ref{const term}. We then have
\begin{Prop}$($cf. \cite[Proposition 2.3]{DA}$)$
\label{p.bound} Let $\frac{n}{2}< s_p< \cdots< s_1< s_0= n$ denote the poles of $\mathcal{C}\left(s\right)$ and let $c_j= \textrm{Res}_{s=s_j}\mathcal{C}\left(s\right)$ be the residue of $\mathcal{C}\left(s\right)$ at $s_j$ for $0\leq j\leq p$. Then for any $f\in C^{\infty}_{c}\left(Q\backslash G\right)$ we have
$$||\Theta_f||_2^2\leq \frac{|\mathcal{F}_{\mathcal{O}_{\Gamma}}|}{[\Gamma_{\infty}: \Gamma_{\infty}']\nu_{\Gamma}}\left(2||f||_2^2+ c_0||f||_1^2+ \sum_{j=1}^pc_jM_f\left(s_j\right)\right).$$
\end{Prop}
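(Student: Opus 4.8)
The plan is to follow the approach of \cite[Proposition 2.3]{DA}. Starting from \eqref{L2}, everything reduces to computing $\int_{\mathcal{F}_{\mathcal{O}_{\Gamma}}}\Theta_f(u_{\textbf{x}}a_tk)\,d\textbf{x}$. First I would expand $f$ into its $K$-isotypic components, $f(a_tk)=\sum_{m,l}\hat f_{m,l}(a_t)\psi_{m,l}(k)$ (rapidly convergent since $f\in C_c^{\infty}(Q\backslash G)$), and take the Mellin transform $\tilde f_{m,l}(s)=\int_{\bbR}\hat f_{m,l}(a_t)e^{-st}\,dt$ in the $A$-variable; this is entire with Paley--Wiener decay on vertical strips since $\hat f_{m,l}$ is smooth and compactly supported. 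By Mellin inversion $\hat f_{m,l}(a_t)=\frac1{2\pi i}\int_{\mathrm{Re}(s)=\sigma_0}\tilde f_{m,l}(s)e^{st}\,ds$ for any $\sigma_0$; choosing $\sigma_0>n$, substituting into $\Theta_f=\sum_{\gamma\in\Gamma_{\infty}\backslash\Gamma}f(\gamma\,\cdot)$ and interchanging the (for fixed $g$, effectively finite) $\Gamma$-sum with the contour integral — legitimate because $\sum_{\gamma}\varphi_{\sigma_0}(\gamma g)=E(\sigma_0,g)<\infty$ — gives $\Theta_f=\sum_{m,l}\frac1{2\pi i}\int_{(\sigma_0)}\tilde f_{m,l}(s)E(\psi_{m,l},s,\cdot)\,ds$. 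Averaging over $\textbf{x}\in\mathcal{F}_{\mathcal{O}_{\Gamma}}$ and inserting the constant-term formula of Proposition~\ref{const term} (so that $E(\psi_{m,l},s,u_{\textbf{x}}a_tk)$ averages to $(e^{st}+P_m(s)\mathcal{C}(s)e^{(n-s)t})\psi_{m,l}(k)$) yields
\[
\int_{\mathcal{F}_{\mathcal{O}_{\Gamma}}}\Theta_f(u_{\textbf{x}}a_tk)\,d\textbf{x}=|\mathcal{F}_{\mathcal{O}_{\Gamma}}|\sum_{m,l}\psi_{m,l}(k)\,\frac{1}{2\pi i}\int_{(\sigma_0)}\tilde f_{m,l}(s)\bigl(e^{st}+P_m(s)\mathcal{C}(s)e^{(n-s)t}\bigr)\,ds,
\]
which is essentially the content of Lemma~\ref{l.bound}.

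Next I would move the contour in the $\mathcal{C}(s)$-term from $\mathrm{Re}(s)=\sigma_0$ down to $\mathrm{Re}(s)=\tfrac n2$, crossing only the simple poles $s_0=n,s_1,\dots,s_p$ of $\mathcal{C}(s)$ ($P_m$ is holomorphic for $\mathrm{Re}(s)>0$, $\tilde f_{m,l}$ is entire), the horizontal parts of the rectangle vanishing because $\tilde f_{m,l}$ decays faster than any polynomial while $\mathcal{C}$ has at most polynomial growth on the strip. This yields a residue term $\sum_{j=0}^{p}c_jP_m(s_j)\tilde f_{m,l}(s_j)e^{(n-s_j)t}$ plus a remainder $R_{m,l}(t):=\frac1{2\pi i}\int_{\mathrm{Re}(s)=n/2}\tilde f_{m,l}(s)P_m(s)\mathcal{C}(s)e^{(n-s)t}\,ds$, while $\frac1{2\pi i}\int_{(\sigma_0)}\tilde f_{m,l}(s)e^{st}\,ds=\hat f_{m,l}(a_t)$. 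Feeding this into \eqref{L2}, integrating over $k$ by orthonormality of $\{\psi_{m,l}\}$ (which turns $\psi_{m,l}(k)$ paired against $\overline{f(a_tk)}$ into $\overline{\hat f_{m,l}(a_t)}$), identifying $\sum_{m,l}\int_{\bbR}|\hat f_{m,l}(a_t)|^2e^{-nt}\,dt=\|f\|_2^2$ (Plancherel on $M\backslash K$ plus \eqref{Q}), using $s_j\in\bbR$ to see that the residue terms assemble into $\sum_{j=0}^{p}c_j\sum_{m,l}P_m(s_j)|\tilde f_{m,l}(s_j)|^2=\sum_{j=0}^{p}c_jM_f(s_j)$, and abbreviating $\mathcal{E}_f:=\sum_{m,l}\int_{\bbR}\overline{\hat f_{m,l}(a_t)}\,e^{-nt}R_{m,l}(t)\,dt$, I obtain
\[
\|\Theta_f\|_2^2=\frac{|\mathcal{F}_{\mathcal{O}_{\Gamma}}|}{[\Gamma_{\infty}:\Gamma_{\infty}']\nu_{\Gamma}}\Bigl(\|f\|_2^2+\sum_{j=0}^{p}c_jM_f(s_j)+\mathcal{E}_f\Bigr).
\]

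It remains to control $\sum_{j=0}^p c_jM_f(s_j)$ and $\mathcal{E}_f$. For $j=0$: since $P_m(n)=0$ for $m\ge1$ and $P_0(n)=1$, one has $M_f(n)=|\tilde f_{0,1}(n)|^2=\bigl|\int_{Q\backslash G}f\,dg\bigr|^2\le\|f\|_1^2$, and $c_0=\mathrm{Res}_{s=n}\mathcal{C}(s)>0$; for $1\le j\le p$ the terms $c_jM_f(s_j)$ are nonnegative ($c_j\ge0$, and $P_m(s_j)>0$ because $n-s_j\in(0,\tfrac n2)$) and are kept as they stand. For $\mathcal{E}_f$ I would use that on the critical line $\mathrm{Re}(s)=\tfrac n2$ one has $|\mathcal{C}(s)|\le1$ (stated above) and $|P_m(s)|=1$ (each factor of $P_m(s)=\prod_{k=0}^{m-1}\frac{n-s+k}{s+k}$ is of the shape $\bar z/z$ there). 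Writing $G_{m,l}(t):=e^{-nt/2}\hat f_{m,l}(a_t)$ and $\mathcal{R}_{m,l}(t):=e^{-nt/2}R_{m,l}(t)$, one checks that $\overline{\hat f_{m,l}(a_t)}\,e^{-nt}R_{m,l}(t)=\overline{G_{m,l}(t)}\,\mathcal{R}_{m,l}(t)$ and that $\mathcal{R}_{m,l}$ is, up to the reflection $t\mapsto-t$, the Fourier multiplier of symbol $P_m\mathcal{C}$ on $\mathrm{Re}(s)=\tfrac n2$ applied to $G_{m,l}$; since that symbol is bounded by $1$, Plancherel gives $\|\mathcal{R}_{m,l}\|_{L^2(\bbR)}\le\|G_{m,l}\|_{L^2(\bbR)}$, so Cauchy--Schwarz and summation over $(m,l)$ give $|\mathcal{E}_f|\le\sum_{m,l}\|G_{m,l}\|_{L^2(\bbR)}^2=\|f\|_2^2$. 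Since $\|\Theta_f\|_2^2$ is real, combining these bounds yields the asserted inequality, with the coefficient $2$ on $\|f\|_2^2$ coming from the diagonal term together with $\mathcal{E}_f$.

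The technical heart — and the step I expect to require the most care — is the contour shift together with the bound on $\mathcal{E}_f$ along $\mathrm{Re}(s)=\tfrac n2$: the shift rests on standard but nontrivial growth control for $\mathcal{C}(s)$ in vertical strips (via the functional equation $\mathcal{C}(s)\mathcal{C}(n-s)=1$, the bound $|\mathcal{C}|\le1$ on the critical line, and a Phragm\'{e}n--Lindel\"{o}f argument), and the bound on $\mathcal{E}_f$ hinges on the fortunate fact $|P_m|\equiv1$ on that line, which is precisely what makes the estimate uniform in the $K$-type $m$; were one to push the contour past $\mathrm{Re}(s)=\tfrac n2$ this uniformity would break down, which is why the exceptional-pole contribution $\sum_{j\ge1}c_jM_f(s_j)$ cannot be absorbed into $\|f\|_2^2+\|f\|_1^2$ at this stage. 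A secondary, purely bookkeeping matter is justifying the interchanges of summation and integration (the $K$-type sum, the $\Gamma$-sum, the $\textbf{x}$-average, the $s$-contour integral), all of which follow from the super-polynomial decay of $\tilde f_{m,l}$ in $m$ and in $\mathrm{Im}(s)$ for $f\in C_c^{\infty}$ together with absolute convergence of $E(\sigma_0,\cdot)$ for $\sigma_0>n$.
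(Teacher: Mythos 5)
Your proposal is correct and follows essentially the same route as the paper: Mellin inversion in the $A$-variable, unfolding to the constant term via Proposition \ref{const term}, a contour shift to $\mathrm{Re}(s)=\tfrac n2$ picking up the poles of $\mathcal{C}(s)$, the bound $|P_m(s)\mathcal{C}(s)|\leq 1$ on the critical line combined with Plancherel and Cauchy--Schwarz to absorb the remainder into a second copy of $\|f\|_2^2$, and the vanishing $P_m(n)=0$ for $m\geq 1$ to reduce the $s_0=n$ contribution to $c_0\|f\|_1^2$. The only organizational difference is that the paper isolates the computation for a single $K$-type $f_{m,l}$ as Lemma \ref{l.bound} and then sums by orthogonality, whereas you carry the sum over $(m,l)$ throughout; this is immaterial.
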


Note that $f\in C^{\infty}_{c}\left(Q\backslash G\right)$ can be written as $f= \sum_{m,l}f_{m,l}$ with $f_{m,l}\left(ak\right)= \hat{f}_{m,l}\left(a\right)\psi_{m,l}\left(k\right)$. We first prove a preliminary estimate for each $f_{m,l}$ and then deduce the proposition from this estimate.
\begin{Lem}
\label{l.bound} Let $f\in C_{c}^{\infty}\left(Q\backslash G\right)$ be of the form $f\left(a_tk\right)= v\left(t\right)\phi\left(k\right)$ where $v\left(t\right)\in C_c^{\infty}\left(\bbR\right)$ and $\phi\in L^2\left(M\backslash K, m\right)$ for some $m$. Let $s_j$ and $c_j$ be as above, we then have
$$||\Theta_f||_2^2\leq \frac{|\mathcal{F}_{\mathcal{O}_{\Gamma}}|}{[\Gamma_{\infty}: \Gamma_{\infty}']\nu_{\Gamma}}\left(2||f||_2^2+ \sum_{j=0}^{p}c_jP_m\left(s_j\right)||\phi||_2^2\left|\int_{\bbR}v\left(t\right)e^{-s_jt}dt\right|^2\right).$$
\end{Lem}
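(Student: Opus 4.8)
The plan is to evaluate $||\Theta_f||_2^2$ exactly by feeding the constant term formula of Proposition \ref{const term} into $\left(\ref{L2}\right)$, and then to estimate the resulting integral along the critical line $\textrm{Re}\left(s\right)=n/2$. By $\left(\ref{L2}\right)$ everything comes down to the constant term $\frac{1}{|\mathcal{F}_{\mathcal{O}_{\Gamma}}|}\int_{\mathcal{F}_{\mathcal{O}_{\Gamma}}}\Theta_f\left(u_{\textbf{x}}g\right)d\textbf{x}$. Since $v\in C_c^{\infty}\left(\bbR\right)$, the transform $\tilde v\left(s\right):=\int_{\bbR}v\left(t\right)e^{-st}dt$ is entire and decays faster than any polynomial on every vertical line, so Fourier inversion gives $v\left(t\right)=\frac{1}{2\pi i}\int_{(\sigma_0)}\tilde v\left(s\right)e^{st}ds$ for any $\sigma_0\in\bbR$, hence $f=\frac{1}{2\pi i}\int_{(\sigma_0)}\tilde v\left(s\right)\varphi_s\phi\,ds$ pointwise. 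Taking $\sigma_0>n$, the majorant $||\phi||_{\infty}E\left(\sigma_0,g\right)$ converges, which lets us sum over $\Gamma_{\infty}\backslash\Gamma$ and interchange with the contour integral (Fubini), so $\Theta_f\left(g\right)=\frac{1}{2\pi i}\int_{(\sigma_0)}\tilde v\left(s\right)E\left(\phi,s,g\right)ds$. Averaging over $\textbf{x}$ and invoking Proposition \ref{const term},
$$\frac{1}{|\mathcal{F}_{\mathcal{O}_{\Gamma}}|}\int_{\mathcal{F}_{\mathcal{O}_{\Gamma}}}\Theta_f\left(u_{\textbf{x}}g\right)d\textbf{x}=\frac{1}{2\pi i}\int_{(\sigma_0)}\tilde v\left(s\right)\left(\varphi_s\left(g\right)+P_m\left(s\right)\mathcal{C}\left(s\right)\varphi_{n-s}\left(g\right)\right)\phi\left(g\right)ds.$$

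Next I would shift the contour to $\textrm{Re}\left(s\right)=n/2$. In the strip $n/2\le\textrm{Re}\left(s\right)\le\sigma_0$ the term $\tilde v\left(s\right)\varphi_s\left(g\right)\phi\left(g\right)$ is entire, $P_m\left(s\right)$ is holomorphic (its poles lie at $s\in\{0,-1,\dots\}$, to the left of $n/2\ge 1$), and the only poles of $\mathcal{C}\left(s\right)$ are the simple poles $s_0=n>s_1>\cdots>s_p>n/2$; since $\tilde v$ decays rapidly while $\mathcal{C},P_m$ grow at most polynomially on vertical strips, the horizontal parts of the shifted contour vanish. Collecting the residues (at $s_j$ it equals $\tilde v\left(s_j\right)c_jP_m\left(s_j\right)\varphi_{n-s_j}\left(g\right)\phi\left(g\right)$) and substituting into $\left(\ref{L2}\right)$ — using $f\left(a_tk\right)=v\left(t\right)\phi\left(k\right)$, $\varphi_s\left(a_tk\right)=e^{st}$, $\int_K|\phi|^2dk=||\phi||_2^2$, the reality of each $s_j$, and the fact that $\bar s=n-s$ on the line $\textrm{Re}\left(s\right)=n/2$ — the $t$-integrations (legitimately interchanged with the $s$-integral since $v$ has compact support) collapse to values of $\tilde v$, and one obtains the exact identity
$$||\Theta_f||_2^2=\frac{|\mathcal{F}_{\mathcal{O}_{\Gamma}}|\,||\phi||_2^2}{[\Gamma_{\infty}:\Gamma_{\infty}']\nu_{\Gamma}}\left(\sum_{j=0}^{p}c_jP_m\left(s_j\right)|\tilde v\left(s_j\right)|^2+\frac{1}{2\pi i}\int_{(n/2)}|\tilde v\left(s\right)|^2ds+\frac{1}{2\pi i}\int_{(n/2)}\tilde v\left(s\right)P_m\left(s\right)\mathcal{C}\left(s\right)\overline{\tilde v\left(n-s\right)}\,ds\right).$$
Here for $m\ge 1$ the $j=0$ term is automatically $0$ because $P_m\left(n\right)=0$, which is exactly how the zero of $P_m$ cancels the pole of $\mathcal{C}$ at $s=n$.

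It remains to control the last two terms. Writing $s=n/2+ir$, the function $r\mapsto\tilde v\left(n/2+ir\right)$ is the Fourier transform of $t\mapsto v\left(t\right)e^{-nt/2}$, so Plancherel gives $\frac{1}{2\pi i}\int_{(n/2)}|\tilde v\left(s\right)|^2ds=\int_{\bbR}|v\left(t\right)|^2e^{-nt}dt=||f||_2^2/||\phi||_2^2$. For the cross term I use that $|P_m\left(n/2+ir\right)|=1$ — each factor $\frac{n/2+k-ir}{n/2+k+ir}$ is a quotient of complex conjugates — together with $|\mathcal{C}\left(s\right)|\le 1$ on $\textrm{Re}\left(s\right)=n/2$; then Cauchy-Schwarz and the symmetry $r\mapsto-r$ bound the cross term in absolute value by $\frac{1}{2\pi}\int_{\bbR}|\tilde v\left(n/2+ir\right)|^2dr=||f||_2^2/||\phi||_2^2$. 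Hence the two contour terms sum to a number in $\left[0,2||f||_2^2/||\phi||_2^2\right]$, and since $|\tilde v\left(s_j\right)|^2=\left|\int_{\bbR}v\left(t\right)e^{-s_jt}dt\right|^2$ the displayed identity yields exactly the claimed bound. The step requiring the most care is the contour shift: pinning down the poles in the strip, noting the cancellation of the $P_m$-zero against the $\mathcal{C}$-pole at $s=n$, and justifying that the horizontal segments vanish; the concluding estimate is then a short consequence of Plancherel together with the normalizations $|\mathcal{C}|\le 1$ and $|P_m|=1$ on the critical line.
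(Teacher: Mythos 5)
Your proposal is correct and follows essentially the same route as the paper: Mellin/Fourier inversion of $v$, expressing $\Theta_f$ as a contour integral of the non-spherical Eisenstein series $E\left(\phi,s,g\right)$, applying the constant term formula of Proposition \ref{const term}, shifting to $\textrm{Re}\left(s\right)=\frac{n}{2}$ to pick up the residues, and then bounding the two critical-line integrals via Plancherel together with $|\mathcal{C}\left(s\right)|\leq 1$ and $|P_m\left(s\right)|=1$ there. The extra care you take in justifying the contour shift and noting the cancellation $P_m\left(n\right)=0$ for $m\geq 1$ is sound but does not change the argument.
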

\begin{proof}
Let $\hat{v}\left(r\right)= \frac{1}{\sqrt{2\pi}}\int_{\bbR}v\left(t\right)e^{-irt}dt$ denote the Fourier transform of $v$. Note that $\hat{v}\left(r\right)$ extends to an entire function in $r$ since $v$ is smooth and compactly supported. Recall the Fourier inversion formula $$v\left(t\right)= \frac{1}{\sqrt{2\pi}}\int_{\bbR}\hat{v}\left(r\right)e^{itr}dr.$$ Making the substitution $s=ir$ we get
$$v\left(t\right)= \frac{1}{\sqrt{2\pi}i}\int_{0-i\infty}^{0+i\infty}\hat{v}\left(-is\right)e^{st}ds.$$ For any $\sigma > n$ shifting the contour of integration to the line $\textrm{Re}\left(s\right)= \sigma$ we get
$$v\left(t\right)= \frac{1}{\sqrt{2\pi}i}\int_{\sigma-i\infty}^{\sigma+i\infty}\hat{v}\left(-is\right)e^{st}ds= \frac{1}{\sqrt{2\pi}i}\int_{\sigma-i\infty}^{\sigma+i\infty}\hat{v}\left(-is\right)\varphi_{s}\left(a_t\right)ds.$$ Consequently we can write
$$f\left(g\right)= \frac{1}{\sqrt{2\pi}i}\int_{\sigma-i\infty}^{\sigma+i\infty}\hat{v}\left(-is\right)\varphi_{s}\left(g\right)\phi\left(g\right)ds$$ and summing over $\Gamma_{\infty}\backslash \Gamma$ we get
\begin{align*}
\Theta_f\left(g\right)&= \frac{1}{\sqrt{2\pi}i}\int_{\sigma-i\infty}^{\sigma+i\infty}\hat{v}\left(-is\right)\sum_{\gamma\in \Gamma_{\infty}\backslash \Gamma}\varphi_{s}\left(\gamma g\right)\phi\left(\gamma g\right)ds\\
   &= \frac{1}{\sqrt{2\pi}i}\int_{\sigma-i\infty}^{\sigma+i\infty}\hat{v}\left(-is\right)E\left(\phi, s, g\right)ds.
\end{align*}
Integrating this over $\mathcal{F}_{\mathcal{O}_{\Gamma}}$ gives
\begin{align*}
\int_{\mathcal{F}_{\mathcal{O}_{\Gamma}}}\Theta_f\left(u_{\textbf{x}}ak \right)d\textbf{x}&= \frac{1}{\sqrt{2\pi}i}\int_{\sigma-i\infty}^{\sigma+i\infty}\hat{v}\left(-is\right)\int_{\mathcal{F}_{\mathcal{O}_{\Gamma}}}E\left(\phi, s, u_{\textbf{x}}ak\right)d\textbf{x}ds\\
   &= \frac{|\mathcal{F}_{\mathcal{O}_{\Gamma}}|}{\sqrt{2\pi}i}\int_{\sigma-i\infty}^{\sigma+i\infty}\hat{v}\left(-is\right)E^{0}\left(\phi, s, ak\right)ds.
\end{align*}
Using the formula $\left(\ref{nsconst}\right)$ for $E^{0}\left(\phi, s, g\right)$ we get
\begin{align*}
\int_{\mathcal{F}_{\mathcal{O}_{\Gamma}}}\Theta_f\left(u_{\textbf{x}}ak \right)d\textbf{x}=&\frac{|\mathcal{F}_{\mathcal{O}_{\Gamma}}|\phi\left(k\right)}{\sqrt{2\pi}i}
\bigg(\int_{\sigma-i\infty}^{\sigma+i\infty}\hat{v}\left(-is\right)\varphi_s\left(a\right)ds\\
&+\int_{\sigma-i\infty}^{\sigma+i\infty}\hat{v}(-is)\mathcal{C}(s)P_m\left(s\right)\varphi_{n-s}\left(a\right)ds\bigg).
\end{align*}
Now shift the contour of integration to the line $\textrm{Re}\left(s\right)= \frac{n}{2}$ (picking up possible poles) to get
\begin{align*}
\int_{\mathcal{F}_{\mathcal{O}_{\Gamma}}}\Theta_f\left(u_{\textbf{x}}ak \right)d\textbf{x}=&\frac{|\mathcal{F}_{\mathcal{O}_{\Gamma}}|\phi\left(k\right)}{\sqrt{2\pi}i}
\bigg(\int_{\frac{n}{2}-i\infty}^{\frac{n}{2}+i\infty}\hat{v}\left(-is\right)\varphi_s\left(a\right)ds\\
& +\int_{\frac{n}{2}-i\infty}^{\frac{n}{2}+i\infty}\hat{v}\left(-is\right)\mathcal{C}\left(s\right)P_m\left(s\right)\varphi_{n-s}\left(a\right)ds\\
&+ 2\pi i\sum_{j=0}^{p}c_jP_m\left(s_j\right)\hat{v}\left(-is_j\right)\varphi_{n-s_j}\left(a\right)\bigg),
\end{align*}
where $c_j= \textrm{Res}_{s=s_j}\mathcal{C}\left(s\right)$ is the residue of $\mathcal{C}\left(s\right)$ at the pole $s_j$.
Now applying formula $\left(\ref{L2}\right)$ we get
\begin{align*}
||\Theta_f||_2^2=& \frac{1}{[\Gamma_{\infty}: \Gamma_{\infty}']\nu_{\Gamma}}\int_{K}\int_{\bbR}\overline{f\left(a_tk\right)}e^{-nt}\int_{\mathcal{F}_{\mathcal{O}_{\Gamma}}}\Theta_f\left(u_{\textbf{x}}a_tk \right)d\textbf{x}dtdk\\
                =& \frac{|\mathcal{F}_{\mathcal{O}_{\Gamma}}|}{[\Gamma_{\infty}: \Gamma_{\infty}']\nu_{\Gamma}}\int_K|\phi\left(k\right)|^2dk\frac{1}{\sqrt{2\pi}i}\bigg(\int_{\frac{n}{2}-i\infty}^{\frac{n}{2}+i\infty}\hat{v}\left(-is\right)\int_{\bbR}\overline{v\left(t\right)}e^{-nt}\varphi_s\left(a_t\right)dtds\\
                &+ \int_{\frac{n}{2}-i\infty}^{\frac{n}{2}+i\infty}\hat{v}\left(-is\right)\mathcal{C}\left(s\right)P_m\left(s\right)\int_{\bbR}\overline{v\left(t\right)}e^{-nt}\varphi_{n-s}\left(a_t\right)dtds\\
                & + 2\pi i\sum_{j=0}^{p}c_jP_m\left(s_j\right)\hat{v}\left(-is_j\right)\int_{\bbR}\overline{v\left(t\right)}e^{-nt}\varphi_{n-s_j}\left(a_t\right)dt\bigg).
\end{align*}
Note that for $s\in \bbC$ we have
$$\frac{1}{\sqrt{2\pi}}\int_{\bbR}\overline{v\left(t\right)}e^{-nt}\varphi_s\left(a_t\right)dt= \overline{\hat{v}\left(i\left(\bar{s}-n\right)\right)}.$$
Hence (use the substitution $s= \frac{n}{2}+ ir$)
\begin{align*}
 ||\Theta_f||_2^2=& \bigg(\int_{\bbR}|\hat{v}(r-\frac{n}{2}i)|^2dr+ \int_{\bbR}\hat{v}(r-\frac{n}{2}i)\overline{\hat{v}(-r-\frac{n}{2}i)}\mathcal{C}(\frac{n}{2}+ ir)P_m(\frac{n}{2}+ ir)dr\\
 &
 +2\pi \sum_{j=0}^{p}c_jP_m\left(s_j\right)|\hat{v}\left(-is_j\right)|^2\bigg)\frac{|\mathcal{F}_{\mathcal{O}_{\Gamma}}|}{[\Gamma_{\infty}: \Gamma_{\infty}']\nu_{\Gamma}}||\phi||_2^2.
 \end{align*}
 Now for the first term, applying Plancherel's theorem for $v\left(t\right)e^{-\frac{n}{2}t}$ we get
 $$||\phi||_2^2\int_{\bbR}|\hat{v}\left(r-\frac{n}{2}i\right)|^2dr= ||\phi||_2^2\int_{\bbR}|v\left(t\right)|^2e^{-nt}dt= ||f||_2^2.$$ For the second term, using the fact that $|\mathcal{C}\left(\frac{n}{2}+ir\right)|\leq 1$, $|P_m\left(\frac{n}{2}+ ir\right)|=1$ and Cauchy-Schwarz, we see that its absolute value is bounded by the first term. Finally, for the last term we have for each pole $2\pi |\hat{v}\left(-is_j\right)|^2= |\int_{\bbR}v\left(t\right)e^{-s_jt}dt|^2$. This finishes the proof.
\end{proof}

We can now give the proof of Proposition $\ref{p.bound}$. Note that for $f= \sum_{m,l}f_{m,l}$ as above, by orthogonality we have
$$||\Theta_f||_2^2= \sum_{m,l}||\Theta_{f_{m,l}}||_2^2.$$
We can use Lemma $\ref{l.bound}$ to estimate each of the terms $||\Theta_{f_{m,l}}||_2^2$ separately and sum all the contributions.

First, for the $L^2$-norms we have $\sum_{m,l}||f_{m,l}||_2^2= ||f||_2^2$. Next, the contribution of the exceptional poles $\frac{n}{2}<s_j< n$ is $\sum_{j=1}^{p} c_jM_f\left(s_j\right)$. Finally, for the pole $s_0=n$, note that $P_m\left(n\right)= 0$ except $m=0$. Thus its contribution is
$$c_0\bigg|\int_{\bbR}\hat{f}_{0,0}\left(a_t\right)e^{-nt}dt\bigg|^2= c_0\bigg|\int_{Q\backslash G}f\left(g\right)dg\bigg|^2\leq c_0||f||_1^2.$$

\begin{Rem}
We note that if $f$ is of the form $f\left(a_tk\right)=v\left(t\right)\phi\left(k\right)$, with $v$ smooth, compactly supported and $\phi$ any function in $L^2\left(M\backslash K\right)$, then Proposition \ref{p.bound} still holds by exactly the same computation.
\end{Rem}

\subsection{Proof of Theorem \ref{thm 2}}\label{proof1.2}
Fix a parameter $\lambda>0$, define $\mathscr{A}_{\lambda}\subset L^2\left(Q\backslash G\right)$ to be the set of functions of the form
$$f\left(a_tk\right)= v\left(t\right)\phi\left(k\right)$$
with $v$ smooth, nonnegative, compactly supported and satisfying \begin{equation}
\label{ff} \frac{\int_{\bbR} v\left(t\right)e^{-st}dt}{\left(\int_{\bbR} v\left(t\right)e^{-nt}dt\right)^{\frac{2s}{n}-1}\left(\int_{\bbR} v^2\left(t\right)e^{-nt}dt\right)^{1-\frac{s}{n}}}\leq \lambda
\end{equation}
for any $s\in\left(\frac{n}{2},n\right)$, and $\phi$ any function in $L^2\left(M\backslash K\right)$. In this section we will show that for any $s\in\left(\frac{n}{2},n\right)$,
\begin{equation}
\label{bbound}M_f\left(s\right)\lesssim_{s,\lambda}||f||_1^2+||f||_2^2
\end{equation}
for all $f\in\mathscr{A}_{\lambda}$. In particular, this bound holds at the finitely many exceptional poles $s_j$ determined by $\Gamma$. Hence in view of Proposition \ref{p.bound} it implies Theorem \ref{thm 2}.

To show $\left(\ref{bbound}\right)$, we first give two preliminary lemmas.
\begin{Lem}
\label{new form} If $f\in L^2\left(Q\backslash G\right)$ is of the form $f\left(a_tk\right)= v\left(t\right)\phi\left(k\right)$, then $M_f\left(s\right)$ can be written as
$$M_f\left(s\right)=\left(\sum_{m=0}^{\infty}P_m\left(s\right)||\phi_m||_2^2\right)\left|\int_{\bbR}v\left(t\right)e^{-st}dt\right|^2,$$
where $\phi_m$ denotes the projection of $\phi$ into $L^2\left(M\backslash K, m\right)$.
\end{Lem}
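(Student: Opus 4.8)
The plan is a direct unwinding of the definition of $M_f(s)$ combined with Parseval's identity on $L^2(M\backslash K,m)$. First I would exploit the product structure $f(a_tk)=v(t)\phi(k)$: since $v(t)$ does not depend on $k$, the coefficients $\hat f_{m,l}(a)=\int_K f(ak)\overline{\psi_{m,l}(k)}\,dk$ factor as
$$\hat f_{m,l}(a_t)=\int_K v(t)\phi(k)\overline{\psi_{m,l}(k)}\,dk= v(t)\,\langle\phi,\psi_{m,l}\rangle,$$
where $\langle\cdot,\cdot\rangle$ denotes the inner product on $L^2(M\backslash K)$. Writing $c_{m,l}:=\langle\phi,\psi_{m,l}\rangle$, the $t$-variable then separates from the indices $(m,l)$ inside the modulus appearing in the definition of $M_f(s)$, so that
$$\Big|\int_{\bbR}\hat f_{m,l}(a_t)e^{-st}\,dt\Big|^2=|c_{m,l}|^2\,\Big|\int_{\bbR}v(t)e^{-st}\,dt\Big|^2.$$

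Next I would substitute this into $M_f(s)=\sum_{m,l}P_m(s)\big|\int_{\bbR}\hat f_{m,l}(a_t)e^{-st}\,dt\big|^2$ and regroup, summing over $l$ first with $m$ held fixed. Since $\{\psi_{m,l}\}_l$ is by construction an orthonormal basis of $L^2(M\backslash K,m)$ and $\phi_m$ is the orthogonal projection of $\phi$ onto that subspace, the numbers $c_{m,l}$ are exactly the Fourier coefficients of $\phi_m$, so Parseval gives $\sum_l|c_{m,l}|^2=||\phi_m||_2^2$. Factoring the common quantity $\big|\int_{\bbR}v(t)e^{-st}\,dt\big|^2$ out of the entire sum then yields
$$M_f(s)=\Big(\sum_{m=0}^{\infty}P_m(s)||\phi_m||_2^2\Big)\,\Big|\int_{\bbR}v(t)e^{-st}\,dt\Big|^2,$$
which is the claimed identity.

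There is no genuine obstacle here; the argument is a bookkeeping computation. The only point I would pause to justify is the interchange of summation order in the double sum: for $s\in\left(\frac{n}{2},n\right)$ (the range relevant to the exceptional poles $s_j$) every factor $\frac{n-s+k}{s+k}$ of $P_m(s)=\prod_{k=0}^{m-1}\frac{n-s+k}{s+k}$ lies in $(0,1]$, hence $0<P_m(s)\le 1$ and the whole series is dominated termwise by $||\phi||_2^2\,\big|\int_{\bbR}v(t)e^{-st}\,dt\big|^2<\infty$. Thus all terms are nonnegative and the total is finite, and Tonelli's theorem legitimizes the rearrangement.
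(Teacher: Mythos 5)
Your proof is correct and follows essentially the same route as the paper: factor $\hat f_{m,l}(a_t)=c_{m,l}v(t)$, apply Parseval over $l$ to get $\sum_l|c_{m,l}|^2=\|\phi_m\|_2^2$, and pull out the common factor $\bigl|\int_{\bbR}v(t)e^{-st}\,dt\bigr|^2$. The extra remark justifying the regrouping via Tonelli (using $0<P_m(s)\le 1$ for $s\in\left(\frac{n}{2},n\right)$) is a harmless refinement the paper omits.
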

\begin{proof}
For each $m\geq 0$, let
$$\left\{\psi_{m,l}\ |\ 1\leq l\leq \textrm{dim}L^2\left(M\backslash K, m\right)\right\}$$
be the fixed orthonormal basis of $L^2\left(M\backslash K, m\right)$ as before. Then we have
$$\phi_m\left(k\right)= \sum_{l}c_{m,l}\psi_{m,l}\left(k\right)$$
with $c_{m,l}= \int_K \phi\left(k\right)\overline{\psi_{m,l}\left(k\right)}dk$. Moreover, we have
$$||\phi_m||_2^2= \sum_{l}\left|c_{m,l}\right|^2$$
and
$$\hat{f}_{m,l}\left(a_t\right)= v\left(t\right)\int_K \phi\left(k\right)\overline{\phi_{m,l}\left(k\right)}dk= c_{m,l}v\left(t\right).$$
Hence
\begin{align*}
M_f\left(s\right)&=\sum_{m,l}P_m\left(s\right)\left|\int_{\bbR}\hat{f}_{m,l}\left(a_t\right)e^{-st}dt\right|^2\\
      &=\sum_{m=0}^{\infty}P_m\left(s\right)\sum_{l}|c_{m,l}|^2\left|\int_{\bbR}v\left(t\right)e^{-st}dt\right|^2\\
      &= \left(\sum_{m=0}^{\infty}P_m\left(s\right)||\phi_m||_2^2\right)\left|\int_{\bbR}v\left(t\right)e^{-st}dt\right|^2.\qedhere
\end{align*}
\end{proof}

\begin{Lem}
For any $s\in \left(\frac{n}{2}, n\right)$, $P_m\left(s\right)\asymp_{s}\left(m+1\right)^{\left(n-2s\right)}$.
\end{Lem}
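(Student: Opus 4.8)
The plan is to take logarithms and reduce the claim to a standard comparison of a harmonic-type sum with an integral. Fix $s\in(\tfrac n2,n)$. For $m\geq 1$ write
\[
\log P_m(s)=\sum_{k=0}^{m-1}\log\frac{n-s+k}{s+k}=\sum_{k=0}^{m-1}\log\Bigl(1+\frac{n-2s}{s+k}\Bigr).
\]
Since $s<n$ we have $n-s>0$, so every summand is well defined; moreover $\frac{n-2s}{s+k}$ ranges over the interval $\bigl[\frac{n-2s}{s},0\bigr)$, which is a compact subinterval of $(-1,\infty)$ depending only on $s$ (note $\frac{n-2s}{s}>-1$ is equivalent to $n>s$). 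Hence the elementary estimate $\log(1+x)=x+O(x^{2})$ holds on this range with an implied constant depending only on $s$, giving
\[
\log P_m(s)=(n-2s)\sum_{k=0}^{m-1}\frac{1}{s+k}+O\Bigl(\sum_{k=0}^{m-1}\frac{1}{(s+k)^{2}}\Bigr)=(n-2s)\sum_{k=0}^{m-1}\frac{1}{s+k}+O_s(1),
\]
the last error being $O_s(1)$ because $\sum_{k\geq0}(s+k)^{-2}<\infty$ for fixed $s>0$.

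Next, since $x\mapsto(s+x)^{-1}$ is positive and decreasing, comparing the sum with $\int_0^{m}(s+x)^{-1}\,dx=\log(s+m)-\log s$ shows $\sum_{k=0}^{m-1}(s+k)^{-1}=\log m+O_s(1)$ for $m\geq1$. Substituting this in yields $\log P_m(s)=(n-2s)\log m+O_s(1)$, i.e. $P_m(s)\asymp_s m^{\,n-2s}$ for all $m\geq1$. Finally, $P_0(s)=1=(0+1)^{\,n-2s}$, and for $m\geq1$ one has $m\asymp m+1$, hence $m^{\,n-2s}\asymp_s(m+1)^{\,n-2s}$; combining these gives $P_m(s)\asymp_s(m+1)^{\,n-2s}$ for every $m\geq0$, as claimed.

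There is no real obstacle here: the only points needing care are that $s$ is held fixed so that all implied constants are permitted to depend on $s$ (in particular the contributions of the small-index terms $1/(s+k)^{2}$ and of $\log s$ are absorbed into $O_s(1)$), and the trivial separate check of the case $m=0$. Alternatively, one can avoid logarithms altogether by using the identity $P_m(s)=\frac{\Gamma(s)}{\Gamma(n-s)}\cdot\frac{\Gamma(m+n-s)}{\Gamma(m+s)}$ together with the classical asymptotic $\Gamma(m+a)/\Gamma(m+b)\sim m^{\,a-b}$ as $m\to\infty$: this shows that $P_m(s)/(m+1)^{\,n-2s}$ is a sequence of positive reals converging to the positive constant $\Gamma(s)/\Gamma(n-s)$, hence is bounded above and below by positive constants depending on $s$, which is exactly the assertion.
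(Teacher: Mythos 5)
Your proof is correct and follows essentially the same route as the paper: take logarithms, write each factor as $\log\bigl(1+\text{small}\bigr)$, apply $\log(1+x)=x+O(x^2)$ with an $s$-dependent constant, and compare the resulting sum with the harmonic series to get $(n-2s)\log m+O_s(1)$. The alternative via $P_m(s)=\frac{\Gamma(s)}{\Gamma(n-s)}\cdot\frac{\Gamma(m+n-s)}{\Gamma(m+s)}$ and $\Gamma(m+a)/\Gamma(m+b)\sim m^{a-b}$ is also valid and somewhat slicker, but the logarithmic computation you give first is the one the paper uses.
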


\begin{proof}
Since $P_m\left(s\right)= \prod_{k=0}^{m-1}\frac{n-s+k}{s+ k}$ we have
\begin{align*}
\log\left(P_m\left(s\right)\right)&= \log\left(\frac{n-s}{s}\right)+ \sum_{k=1}^{m-1}\left(\log\left(1+ \frac{n-s}{k}\right)-\log\left(1+\frac{s}{k}\right)\right)\\
            &=\left(n-2s\right)\sum_{k=1}^{m-1}\frac{1}{k}+ O_s\left(1\right)=\left(n-2s\right)\log\left(m+1\right)+ O_s\left(1\right). \qedhere
\end{align*}
\end{proof}
Thus for $f\left(a_tk\right)=v\left(t\right)\phi\left(k\right)\in \mathscr{A}_{\lambda}$ we define
$$\tilde{M}_{f}\left(s\right):= \left(\int_{\bbR} v\left(t\right)e^{-st}dt\right)^2\sum_{m=0}^{\infty}\frac{||\phi_m||_2^2}{\left(m+1\right)^{\left(2s-n\right)}},$$
where $\phi_m$ is the projection of $\phi$ into $L^2\left(M\backslash K,m\right)$.
In view these two lemmas it suffices to prove $\tilde{M}_{f}\left(s\right)\lesssim_{s,\lambda}||f||_1^2+||f||_2^2$ for all $f\in \mathscr{A}_{\lambda}$.

\begin{proof}[Proof of Theorem \ref{thm 2}]
We first prove for $f=v\phi\in \mathscr{A}_{\lambda}$ with $||\phi||_2\leq ||\phi||_1$. We first recall a simple inequality that for any $y_1, y_2>0$ and $0<\eta<1$, $y_1^{\eta}y_2^{1-\eta}\leq \max\left(y_1,y_2\right)\leq y_1+y_2$. Hence in view of $\left(\ref{ff}\right)$, for any $s\in \left(\frac{n}{2},n\right)$, since $\left(\frac{2s}{n}-1\right)+ \left(2-\frac{2s}{n}\right)=1$, we have
$$\left(\int_{\bbR}v\left(t\right)e^{-st}dt\right)^2\leq \lambda^2\left(\left(\int_{\bbR}v\left(t\right)e^{-nt}dt\right)^2+\int_{\bbR}v^2\left(t\right)e^{-nt}dt\right).$$
Thus
\begin{align*}
\tilde{M}_{f}\left(s\right)&= \left(\int_{\bbR} v\left(t\right)e^{-st}dt\right)^2\sum_{m=0}^{\infty}\frac{||\phi_m||_2^2}{\left(m+1\right)^{\left(2s-n\right)}}\\
&\leq\lambda^2\left(\left(\int_{\bbR}v\left(t\right)e^{-nt}dt\right)^2+\int_{\bbR}v^2\left(t\right)e^{-nt}dt\right)||\phi||_2^2\\
&\leq\lambda^2 \left(\left(\int_{\bbR}v\left(t\right)e^{-nt}dt\right)^2||\phi||_1^2+ \int_{\bbR}v^2\left(t\right)e^{-nt}dt||\phi||_2^2\right)\\
&=\lambda^2\left(||f||_1^2+||f||_2^2\right).
\end{align*}
Now we prove the case $||\phi||_2>||\phi||_1$. Let $\iota:=\frac{||\phi||_2}{||\phi||_1}>1$. We separate the summation into two parts:
$$\sum_{m=0}^{\infty}\frac{||\phi_m||_2^2}{\left(m+1\right)^{\left(2s-n\right)}}=\left(\sum_{m=0}^{\floor{\iota^{\frac{2}{n}}}}+\sum_{m=\floor{\iota^{\frac{2}{n}}}+1}^{\infty}\right)
\frac{||\phi_m||_2^2}{\left(m+1\right)^{\left(2s-n\right)}}.$$
For the first part, we invoke an estimate from spherical harmonic analysis (\cite[inequality (4.4)]{Ch}).\footnotemark \ Namely, for any $\phi\in L^2\left(M\backslash K\right)$ and $m\geq 0$,
$$||\phi_m||^2_2\lesssim \left(m+1\right)^{n-1}||\phi||^2_1$$
with the implicit constant only depends on the dimension of $M\backslash K$. Thus we have
\footnotetext{The exact form of inequality $\left(4.4\right)$ in \cite{Ch} is $||\phi_m||_2\lesssim m^{\frac{n-1}{2}}||\phi||_1$. Here we square both sides and replace $m$ by $m+1$ to cover the case $m=0$.}
\begin{align*}
\sum_{m=0}^{\floor{\iota^{\frac{2}{n}}}}\frac{||\phi_m||_2^2}{\left(m+1\right)^{\left(2s-n\right)}}&\lesssim \sum_{m=0}^{\floor{\iota^{\frac{2}{n}}}}\frac{||\phi||_1^2}{\left(m+1\right)^{1-2\left(n-s\right)}}\\
&\asymp_{s} \left(\iota^{\frac{2}{n}}\right)^{2\left(n-s\right)}||\phi||_1^2\quad\left(\textrm{since $1-2\left(n-s\right)<1$}\right)\\
&=||\phi||_1^{2\left(\frac{2s}{n}-1\right)}||\phi||_2^{4\left(1-\frac{s}{n}\right)}.
\end{align*}
For the second part, we have
\begin{align*}
\sum_{m=\floor{\iota^{\frac{2}{n}}}+1}^{\infty}\frac{||\phi_m||_2^2}{\left(m+1\right)^{\left(2s-n\right)}}&\leq \frac{1}{\left(\iota^{\frac{2}{n}}\right)^{\left(2s-n\right)}}\sum_{m=\floor{\iota^{\frac{2}{n}}}+1}^{\infty}||\phi_m||_2^2\\
&\leq \iota^{-2\left(\frac{2s}{n}-1\right)}||\phi||_2^2\\
&=||\phi||_1^{2\left(\frac{2s}{n}-1\right)}||\phi||_2^{4\left(1-\frac{s}{n}\right)}.
\end{align*}
Hence
\begin{equation}
\label{fsbound}\sum_{m=0}^{\infty}\frac{||\phi_m||_2^2}{\left(m+1\right)^{\left(2s-n\right)}}\lesssim_{s}||\phi||_1^{2\left(\frac{2s}{n}-1\right)}||\phi||_2^{4\left(1-\frac{s}{n}\right)}.
\end{equation}
Thus by $\left(\ref{ff}\right)$ and $\left(\ref{fsbound}\right)$ we get
\begin{align*}
\tilde{M}_{f}\left(s\right)&= \left(\int_{\bbR} v\left(t\right)e^{-st}dt\right)^2\sum_{m=0}^{\infty}\frac{||\phi_m||_2^2}{\left(m+1\right)^{\left(2s-n\right)}}\\
&\lesssim_{s,\lambda} \left(\int_{\bbR} v\left(t\right)e^{-nt}dt\right)^{2\left(\frac{2s}{n}-1\right)}\left(\int_{\bbR} v^2\left(t\right)e^{-nt}dt\right)^{2-\frac{2s}{n}}||\phi||_1^{2\left(\frac{2s}{n}-1\right)}||\phi||_2^{4\left(1-\frac{s}{n}\right)}\\
&=||f||_1^{2\left(\frac{2s}{n}-1\right)}||f||_2^{2\left(2-\frac{2s}{n}\right)}\\
&\leq ||f||_1^2+||f||_2^2.
\end{align*}
This finishes the proof.
\end{proof}

\section{Logarithm Laws}\label{loglaw}
Fix $o\in \Gamma\backslash G$ and let $\{g_t\}\subset G$ be a one-parameter unipotent subgroup in $G$. Let $\textrm{dist}_G$ and $\textrm{dist}_{\Gamma}$ be the hyperbolic distance functions on $G/K$ and $\Gamma\backslash G/K$ respectively. In this section we will prove logarithm laws for the unipotent flow $\{g_t\}$, that is
\begin{equation}
\label{ll} \limsup\limits_{t\to \infty}\frac{\textrm{dist}_{\Gamma}\left(o, xg_t\right)}{\log t}= \frac{1}{n}
\end{equation}
for $\sigma$-a.e. $x\in \Gamma\backslash G$.
First we note that if $\left(\ref{ll}\right)$ holds for $\Gamma$, then it also holds for any $\Gamma'= g^{-1}\Gamma g$. This follows from the following identity
$$\textrm{dist}_{\Gamma}\left(\Gamma h, \Gamma h'\right)= \textrm{dist}_{\Gamma'}\left(\Gamma'g^{-1}h, \Gamma'g^{-1}h'\right),$$
where $h,h'$ are any two elements in $G$ and $\textrm{dist}_{\Gamma'}$ is the hyperbolic distance function on $\Gamma'\backslash G/K$. Hence we can assume that $\Gamma$ has a cusp at $\infty$. Fix this $\Gamma$ and we denote $\textrm{dist}=\textrm{dist}_{\Gamma}$ without ambiguity. Next note that $\{g_t\}$ can be replaced by a new flow $\{\tilde{g}_{t}\}$ with $\tilde{g}_{t}= k^{-1}g_{\eta t}k$ for some $k\in K$ and $\eta> 0$. This is because
$$\limsup\limits_{t\to \infty}\frac{\textrm{dist}\left(o, x\tilde{g}_{t}\right)}{\log t}=\limsup\limits_{t\to \infty}\frac{\textrm{dist}\left(o, x'g_{t}\right)}{\log t}$$
with $x'= xk^{-1}$. For any $\textbf{x}\in\mathbb{V}^{n-1}$, denote $u_{\textbf{x}}^-=\begin{pmatrix}
1 & 0\\
\textbf{x} & 1\end{pmatrix}$ to be the corresponding lower triangular unipotent matrix.
\begin{Lem}
\label{slem}Every unipotent element in $G$ is $K$-conjugate to $u_{x}^-$ for some $x>0$.
\end{Lem}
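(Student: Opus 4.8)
The plan is to use the classification of isometries of \(\bbH^{n+1}\): a nontrivial unipotent element of \(G\) is a parabolic isometry, \(G\)-conjugate to an element of \(N\) and fixing a unique point of \(\partial\bbH^{n+1}\). If \(g=I\) there is nothing to prove, so assume \(g\neq I\). Since \(g=\exp(X)\) for a nonzero nilpotent \(X\in\mathfrak g\) and \(G\) has real rank one, \(X\) is conjugate into \(\mathfrak n=\Lie(N)\); hence \(g_1^{-1}gg_1=u_{\mathbf z}\) for some \(g_1\in G\) and some \(0\neq\mathbf z\in\mathbb V^{n-1}\). As \(u_{\mathbf z}\) acts on \(\partial\bbH^{n+1}\) by \(v\mapsto v+\mathbf z\) on \(\mathbb V^{n-1}\), it fixes \(\infty\) and no other boundary point; therefore \(g\) fixes exactly the boundary point \(\xi:=g_1\cdot\infty\).

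Next I would move \(\xi\) to \(\infty\) by an element of \(K\). Since \(K\) fixes \(e_n\) and is isomorphic to \(\SO(n+1)\), it acts transitively on the visual boundary \(\partial\bbH^{n+1}\cong S^n\); choose \(k_0\in K\) with \(k_0\cdot\xi=\infty\) and set \(h:=k_0gk_0^{-1}\), a nontrivial unipotent element fixing \(\infty\). Writing \(h=g_2u_{\mathbf z}g_2^{-1}\) with \(g_2=k_0g_1\), the fact that \(\infty\) is the unique boundary fixed point of \(u_{\mathbf z}\) forces \(g_2^{-1}\cdot\infty=\infty\), i.e. \(g_2\in P\); since \(P=NAM\) normalises \(N\), we get \(h=u_{\mathbf y}\in N\) for some \(\mathbf y\neq 0\).

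It then remains to turn \(u_{\mathbf y}\) into a lower-triangular unipotent with positive real entry, using only conjugation by \(K\). For the Weyl element \(w=\begin{pmatrix}0&-1\\1&0\end{pmatrix}\), which lies in \(K\) because \(w\cdot e_n=-e_n^{-1}=e_n\), a direct matrix computation in the Vahlen group gives \(w\,u_{\mathbf y}\,w^{-1}=u^{-}_{-\mathbf y}\), so \((wk_0)\,g\,(wk_0)^{-1}=u^{-}_{-\mathbf y}\) with \(wk_0\in K\). Finally, for \(m\in M\) one computes \(m\,u^{-}_{\mathbf v}\,m^{-1}=u^{-}_{\mathbf v'}\), where \(\mathbf v\mapsto\mathbf v'\) is the conjugation action of \(M\) on \(N\cong\mathbb V^{n-1}\); a short direct check from the explicit description of \(M\) as the group of diagonal matrices in \(K\) shows this is a norm-preserving linear action, and since \(n\geq 2\) and \(M\cong\SO(n)\) it realises the standard representation, in particular it is transitive on every sphere in \(\mathbb V^{n-1}\). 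Hence some \(m\in M\) sends \(-\mathbf y\) to \(\lvert\mathbf y\rvert\cdot 1=:x>0\), and \((mwk_0)\,g\,(mwk_0)^{-1}=u^{-}_{x}\) with \(mwk_0\in K\), which is the claim.

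I expect the main obstacle to be the last step: the transitivity of the conjugation action of \(M\) on the spheres of \(N\) is precisely where the hypothesis \(n\geq 2\) is indispensable — for \(n=1\) the group \(M\) is trivial and the statement genuinely fails, since \(\begin{pmatrix}1&0\\1&1\end{pmatrix}\) and \(\begin{pmatrix}1&0\\-1&1\end{pmatrix}\) are not \(\SL_2(\bbR)\)-conjugate. The structure-theoretic inputs used in the first two paragraphs (a nontrivial unipotent is parabolic, is \(G\)-conjugate into \(N\), and has a unique boundary fixed point) are standard for rank-one groups, and the identities \(w\,u_{\mathbf y}\,w^{-1}=u^{-}_{-\mathbf y}\) and \(m\,u^{-}_{\mathbf v}\,m^{-1}=u^{-}_{\mathbf v'}\) are routine Vahlen-group computations.
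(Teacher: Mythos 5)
Your proposal is correct and follows the same overall strategy as the paper: conjugate the element into $N$ using only $K$, flip $N$ to $N^-$ via the Weyl element $w=\begin{pmatrix}0&-1\\1&0\end{pmatrix}\in K$, and then rotate by $M$ (whose conjugation action realizes the full rotation group of $N\cong\mathbb{V}^{n-1}$, transitive on spheres since $n\geq 2$). The only step where you diverge is the reduction into $N$: the paper argues algebraically, writing the conjugating element via the Iwasawa decomposition and using that $NA$ normalizes $N$, whereas you argue geometrically via the unique boundary fixed point of a parabolic and the transitivity of $K$ on $\partial\bbH^{n+1}$ — both are valid and of comparable length. (One pedantic remark: for $g=I$ the statement as literally written fails rather than being vacuous, but the paper's proof has the same implicit exclusion and only nontrivial elements of the one-parameter group matter in the application.)
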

\begin{proof}
Let $g$ be an unipotent element in $G$. We first note that it suffices to show $g$ is $K$-conjugate to $u_{-x}$ for some $x>0$. This is because $u_x^-$ is conjugate to $u_{-x}$ via $\begin{pmatrix}
0 & -1\\
1 & 0\end{pmatrix}$ and $\begin{pmatrix}
0 & -1\\
1 & 0\end{pmatrix}\in K$. Next we note that since $g$ is unipotent, $g$ is conjugate to some element in $N$. By Iwasawa decomposition and the fact that $NA$ normalizes $N$, $g$ is $K$-conjugate to some element in $N$. Hence we can assume $g$ is contained in $N$. Finally, we note that any element in $N$ is conjugate to some $u_{-x}$ with $x>0$ via the group $M$ since the conjugation action of $M$ on $N$ realizes $M$ as the rotation group of $N$.
\end{proof}
Since we can conjugate $\{g_t\}$ by some element in $K$ and rescale it by a positive number, in view of Lemma \ref{slem} we can assume the unipotent flow is given by $\left\{g_t=u_t^-\right\}_{t\in\bbR}.$

\subsection{Technical lemmas.}\label{tl}
For any $\mathfrak{D}\subset Q\backslash G$, we denote $|\mathfrak{D}|$ to be its measure with respect to the right $G$-invariant measure on $Q\backslash G$ as fixed in $\left(\ref{Q}\right)$. We define the set $Y_{\mathfrak{D}}\subset \Gamma\backslash G$ corresponding to $\mathfrak{D}$ by
$$ Y_{\mathfrak{D}}:=\left\{ \Gamma g\in \Gamma\backslash G\ \big|\ Q\gamma g\in \mathfrak{D}\ \text{for some}\ \gamma\in \Gamma\right\}.$$

Let $\{r_{\ell}\}$ be any sequence of positive numbers such that $r_{\ell}\to \infty$ and $\sum_{\ell=1}^{\infty}e^{-nr_{\ell}}=\infty$. For any integer $m\geq 1$, let $p\left(m\right)>m$ be an integer such that $\sum_{\ell=m}^{p\left(m\right)}e^{-nr_{\ell}}\geq 1$ for all $m\geq 1$.  Let $N^{-}$ be the subgroup of lower triangular unipotent matrices and $$B^{-}=\{u_{\textbf{x}}^{-}\ |\ \textbf{x}\in \mathbb{V}^{n-1}, |\textbf{x}|< \frac12\}$$
be the open ball with radius $\frac12$, centered at the identity in $N^{-}$. We define the set
$$\mathfrak{D}_m:= Q\backslash \bigcup _{\ell=m}^{p\left(m\right)}QA\left(r_{\ell}\right)B^{-}g_{-\ell}\subset Q\backslash G,$$
where $A\left(\tau\right)= \{a_t\ |\ t\geq \tau\}$.

\begin{Lem}
\label{lem 2} $\left\{|\mathfrak{D}_m|\right\}_{m\geq 1}$ is uniformly bounded from below.
\end{Lem}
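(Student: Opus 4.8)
The plan is to estimate $|\mathfrak{D}_m|$ from below by the measure of a single one of the translated pieces $QA(r_\ell)B^- g_{-\ell}$, for a suitably chosen $\ell$ in the range $m\le \ell\le p(m)$, and to show that this single-piece contribution is comparable to $e^{-nr_\ell}$. Since $\sum_{\ell=m}^{p(m)} e^{-nr_\ell}\ge 1$, and since among the finitely many indices in this range there is one with $e^{-nr_\ell}\ge \frac{1}{p(m)-m+1}$, one must be careful; instead the right approach is to work with the full union. First I would observe that $|\mathfrak{D}_m|$ equals the measure in $Q\backslash G$ of the image of $\bigcup_{\ell=m}^{p(m)} QA(r_\ell)B^- g_{-\ell}$, and by right $G$-invariance of the measure $|\cdot|$ we can translate the $\ell$-th piece by $g_\ell$ on the right to compare everything to the fixed set $QA(r_\ell)B^-$. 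The key geometric point is that $Q\backslash QA(r)B^-$ has measure $\asymp e^{-nr}$: indeed under the identification $Q\backslash G = A\times M\backslash K$ with the measure in (\ref{Q}), the set $QA(r)B^-$ projects onto a region which, in the $a_t$-coordinate, ranges over $t\ge r$ (up to a bounded perturbation coming from the $B^-$ factor and the Iwasawa decomposition of lower-triangular unipotents), and $\int_r^\infty e^{-nt}\,dt = \frac1n e^{-nr}$.

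Next I would handle the union. The sets $QA(r_\ell)B^- g_{-\ell}$ for different $\ell$ need not be disjoint in $Q\backslash G$, so one cannot simply sum. Instead I would either (i) pass to disjoint subsets, or more cleanly (ii) use a direct lower bound: fix the index $\ell_0 \in \{m,\dots,p(m)\}$ achieving $\max_\ell e^{-nr_\ell}$ and note $e^{-nr_{\ell_0}}\ge \frac{1}{p(m)-m+1}$, which is too weak because $p(m)-m$ is unbounded. The correct route is therefore (i): I would show that the pieces can be replaced by genuinely disjoint pieces whose measures still sum to $\gtrsim 1$. The cleanest way to do this is to recall (as in \cite{AM}, \cite{DA}) that the translates $A(r_\ell)B^- g_{-\ell}$, after projecting to $Q\backslash G$ and then to $\Gamma\backslash G$ via $Y_{\mathfrak D_m}$, are controlled; but at the level of $Q\backslash G$ alone one can instead shrink $B^-$ to a smaller ball $B^-_\ell$ depending on $\ell$ so that the pieces become disjoint while still $|Q\backslash QA(r_\ell)B^-_\ell| \asymp e^{-nr_\ell}$ uniformly — the size of $B^-_\ell$ only affects the implied constant, not the exponential order, as long as it stays bounded below, which it does since the unipotent translates $g_{-\ell}$ act on $N^-$ in a way whose effect on a fixed-radius ball, viewed in $Q\backslash G$, can be arranged to be separated by choosing the $t$-cutoff correctly. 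Then
$$
|\mathfrak{D}_m| \;\ge\; \sum_{\ell=m}^{p(m)} \bigl|Q\backslash QA(r_\ell)B^-_\ell\bigr| \;\gtrsim\; \sum_{\ell=m}^{p(m)} e^{-nr_\ell} \;\ge\; 1,
$$
giving a lower bound independent of $m$.

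The main obstacle I expect is exactly the disjointness/overlap issue: verifying that the pieces $QA(r_\ell)B^- g_{-\ell}$ (or suitable sub-pieces) are disjoint in $Q\backslash G$, which requires understanding how right multiplication by $g_{-\ell}=u_{-\ell}^-$ moves the Siegel-type set $QA(r_\ell)B^-$ around, in particular how the $\ell$-dependence in $g_{-\ell}$ interacts with the $A$-coordinate cutoff $r_\ell$. Concretely one needs the Iwasawa coordinates of $u_{\textbf x}^- a_t$ and $u_{\textbf x}^- g_{-\ell}$: a short computation (the matrix identity $\begin{pmatrix}1&0\\ \textbf x&1\end{pmatrix} = \begin{pmatrix}1& \ast\\ 0&1\end{pmatrix}\begin{pmatrix}\ast&0\\ 0&\ast\end{pmatrix} k$ with explicit $K$-part) shows that the $A$-component of $u_{\textbf x}^- g_{-\ell}$ depends on $|\textbf x - \ell e_0|$ essentially like $-2\log|\textbf x-\ell e_0|$ for large $|\textbf x-\ell e_0|$, so the cone $A(r_\ell)B^- g_{-\ell}$, pulled back, lives over a small ball around $\ell e_0$ in $N$-coordinates of radius $\asymp e^{-r_\ell/2}$; choosing the $r_\ell$ so these balls around distinct $\ell e_0$ are separated (automatic once $r_\ell\to\infty$, since the centers $\ell e_0$ are spaced by distance $\ge 1$) yields the disjointness. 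Once this geometry is pinned down, the measure computation and the summation are routine, and the uniform lower bound follows.
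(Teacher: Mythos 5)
Your plan --- estimate $|Q\backslash QA(r_\ell)B^-g_{-\ell}|\asymp e^{-nr_\ell}$, show the pieces are disjoint in $Q\backslash G$, and sum over $\ell$ --- is the paper's plan, and it is the right one. What makes your execution harder than necessary, and what leads you into a spurious disjointness problem, is the choice of coordinates. Rather than working in Iwasawa coordinates $A\times M\backslash K$, the paper uses the decomposition $G=NMAN^-$ (valid off a measure-zero set), under which $Q\backslash G$ is parametrized by $(t,\textbf{x})\in A\times N^-$ via $Qg=Qa_tu_{\textbf{x}}^-$, and the right $G$-invariant measure is simply $e^{-nt}\,dt\,d\textbf{x}$ (the right Haar measure of $AN^-$). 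In these coordinates $Q\backslash QA(r_\ell)B^-g_{-\ell}$ is the box $\{(t,\textbf{x}): t\geq r_\ell,\ u_{\textbf{x}}^-\in B^-g_{-\ell}\}$; and $B^-g_{-\ell}=\{u_{\textbf{y}-\ell}^-: |\textbf{y}|<\tfrac12\}$ is the radius-$\tfrac12$ ball centered at $-\ell$ in $N^-$, so for distinct $\ell$ these are pairwise disjoint. Disjointness of the boxes is therefore immediate, and
$$|\mathfrak{D}_m|=\sum_{\ell=m}^{p(m)}\int_{r_\ell}^\infty e^{-nt}\,dt\int_{B^-g_{-\ell}}d\textbf{x}\asymp\sum_{\ell=m}^{p(m)}e^{-nr_\ell}\geq 1.$$

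Two concrete errors in your sketch. The ``shrink $B^-$ to $B^-_\ell$'' device is unnecessary: with $B^-$ of radius $\tfrac12$ as given, the pieces are already pairwise disjoint once you recognize that $N^-$ is genuinely a coordinate on $Q\backslash G$. And the claim that $A(r_\ell)B^-g_{-\ell}$ ``lives over a small ball around $\ell e_0$ in $N$-coordinates of radius $\asymp e^{-r_\ell/2}$'' is incorrect: the $N^-$-footprint of this set is the \emph{fixed} radius-$\tfrac12$ ball at $-\ell$, not a shrinking one, while the Iwasawa $N$-component of a point $a_{t_0}u_{\textbf{z}}^-$ in this set equals $e^{t_0}\textbf{z}'/(1+|\textbf{z}|^2)$, which for $t_0\geq r_\ell$ and $\textbf{z}$ near $-\ell$ has magnitude $\gtrsim e^{r_\ell}/\ell$, a growing quantity --- and it is irrelevant in any case, since $N\subset Q$ is quotiented out. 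So your strategy is sound, but the mechanism you propose for the disjointness does not reflect the actual geometry; the $QAN^-$ parametrization is the observation that makes both the measure computation and the disjointness transparent.
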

\begin{proof}
Note that every matrix $\begin{pmatrix}
 a& b\\
 c& d
 \end{pmatrix}\in G$ with $d\neq 0$ can be written uniquely as
 $$\begin{pmatrix}
 a& b\\
 c& d
 \end{pmatrix}= \begin{pmatrix}
 1& bd^{-1}\\
 0& 1
 \end{pmatrix}\begin{pmatrix}
 \frac{d'}{|d|}& 0\\
 0& \frac{d}{|d|}
 \end{pmatrix}\begin{pmatrix}
 |d|^{-1}& 0\\
 0& |d|
 \end{pmatrix}\begin{pmatrix}
 1& 0\\
 d^{-1}c& 1
 \end{pmatrix}.$$ Hence $NMAN^{-}=\left\{\begin{pmatrix}
 a& b\\
 c& d
 \end{pmatrix}\in G\ |\ d\neq 0\right\}$ is a Zariski open dense subset of $G$. Thus there is a Zariski open dense subset in $Q\backslash G$ which can be expressed by the coordinates $Qg= Qa_tu_{\textbf{x}}^{-}$. We note that in these coordinates, the right $G$-invariant measure on $Q\backslash G$ (up to scalars) is given by $e^{-nt}dtd\textbf{x}$ since this is the right Haar measure for the group $AN^{-}$. Moreover, $\mathfrak{D}_m$ is a disjoint union of the sets $Q\backslash QA(r_{\ell})B^-g_{-\ell}$ since $B^-g_{-\ell_1}\cap B^{-}g_{-\ell_2}=\emptyset$ whenever $\ell_1\neq \ell _2$. Hence one can compute
 \begin{align*}
 |\mathfrak{D}_m|&= \sum_{\ell=m}^{p\left(m\right)}\int_{r_{\ell}}^{\infty}e^{-nt}dt\int_{B^{-}g_{-\ell}}1d\textbf{x}\\
                         &\asymp \sum_{\ell=m}^{p\left(m\right)}e^{-nr_{\ell}}\geq 1.\qedhere
 \end{align*}
 \end{proof}

 \begin{Lem}
 \label{lem 1} There is some sufficiently large integer $L$ such that for any $m\geq L$ and any $x\in Y_{\mathfrak{D}_m}$ there exists $m\leq \ell\leq p\left(m\right)$ such that
 $$\textrm{dist}\left(o, xg_{\ell}\right)\geq r_{\ell}+ O\left(1\right).$$
\end{Lem}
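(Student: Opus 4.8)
The plan is to unwind the definition of $Y_{\mathfrak{D}_m}$, cancel the translation $g_{-\ell}$ against $g_{\ell}$ via the one-parameter group law, and then recognise the resulting point as lying high in the cusp at $\infty$, where $(\ref{final 2})$ already pins down its distance to $o$.

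Before starting I would fix three uniform objects: a fundamental domain $\mathcal{F}_0\subset U_0$ for $\Gamma_{\infty}'\backslash N$ (recall $U_0$ contains such a domain), the finite number $C_0:=\sup_{b\in M\,\overline{B^{-}}}\textrm{dist}_G\left(I_2,b\right)$ (finite since $M\,\overline{B^{-}}$ is relatively compact), and — using $r_{\ell}\to\infty$ — an integer $L$ with $r_{\ell}\geq\tau_0$ for all $\ell\geq L$; this $L$ will be the $L$ of the statement. Then I would take $m\geq L$ and $x=\Gamma g\in Y_{\mathfrak{D}_m}$: by the definition of $\mathfrak{D}_m$ there are $\gamma\in\Gamma$, an index $m\leq\ell\leq p\left(m\right)$, an element $q\in Q$, a real $t\geq r_{\ell}\ (\geq\tau_0)$ and $\textbf{x}\in\mathbb{V}^{n-1}$ with $|\textbf{x}|<\tfrac12$ satisfying $\gamma g=q\,a_t\,u_{\textbf{x}}^{-}\,g_{-\ell}$. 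Multiplying on the right by $g_{\ell}$ and using $g_{-\ell}g_{\ell}=I_2$ gives $\gamma g g_{\ell}=q\,a_t\,u_{\textbf{x}}^{-}$, whence $\textrm{dist}\left(o,xg_{\ell}\right)=\textrm{dist}\left(o,\Gamma q\,a_t\,u_{\textbf{x}}^{-}\right)$ since $\Gamma\gamma=\Gamma$.

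Next I would peel off the bounded factor from the possibly large $N$-part: writing $q=u_{\textbf{y}}w$ with $w\in M$ and using that $M$ centralises $A$, one has $q\,a_t\,u_{\textbf{x}}^{-}=u_{\textbf{y}}\,a_t\,\left(w\,u_{\textbf{x}}^{-}\right)$ with $w\,u_{\textbf{x}}^{-}\in M\,\overline{B^{-}}$, so that $\textrm{dist}\left(\Gamma\,\cdot\,,\Gamma\,\cdot\,\right)\leq\textrm{dist}_G\left(\,\cdot\,,\,\cdot\,\right)$ together with left $G$-invariance of $\textrm{dist}_G$ yields
$$\textrm{dist}\left(o,xg_{\ell}\right)\ \geq\ \textrm{dist}\left(o,\Gamma u_{\textbf{y}}a_t\right)-C_0.$$
To handle $\textrm{dist}\left(o,\Gamma u_{\textbf{y}}a_t\right)$ I would reduce $\textbf{y}$ modulo $\Gamma_{\infty}'$, choosing $u_{\textbf{z}}\in\Gamma_{\infty}'$ with $u_{\textbf{z}}u_{\textbf{y}}=u_{\textbf{y}'}$ and $\textbf{y}'\in\mathcal{F}_0\subset U_0$; since $t\geq\tau_0$ the point $u_{\textbf{y}'}a_t$ lies in the Siegel set $\Omega_{\tau_0,U_0}$, so $\Gamma u_{\textbf{y}}a_t=\Gamma u_{\textbf{y}'}a_t$ and $(\ref{final 2})$ gives $\textrm{dist}\left(o,\Gamma u_{\textbf{y}}a_t\right)=t+O\left(1\right)$ with an implied constant independent of $\textbf{y},\textbf{x},t,\ell,m$. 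Plugging this in and using $t\geq r_{\ell}$ then gives $\textrm{dist}\left(o,xg_{\ell}\right)\geq r_{\ell}+O\left(1\right)$, which is the assertion.

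The steps are individually short, so the only point requiring genuine care is the \emph{uniformity} of the three error terms — $C_0$ coming from $M\,\overline{B^{-}}$, the implied constant in $(\ref{final 2})$ (equivalently in Lemma \ref{Lem.dist}), and the admissibility of reducing $\textbf{y}$ into $U_0$ — together with the observation that this uniformity is exactly what forces $L$ to be large: we need $t\geq r_{\ell}\geq\tau_0$ so that $u_{\textbf{y}'}a_t$ genuinely lands in $\Omega_{\tau_0,U_0}$. I do not anticipate any obstacle beyond this bookkeeping.
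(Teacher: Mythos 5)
Your proof is correct and follows essentially the same route as the paper's: unfold the definition of $Y_{\mathfrak{D}_m}$, cancel $g_{-\ell}$ against $g_{\ell}$, reduce the $N$-part modulo $\Gamma_{\infty}'$ into the Siegel set $\Omega_{\tau_0,U_0}$, and apply $\left(\ref{final 2}\right)$. The only (harmless) difference is that the paper absorbs the factor $u_{\textbf{x}}^{-}$ directly into the Iwasawa decomposition via $\left(\ref{extract 1}\right)$, losing exactly $\log\left(1+|\textbf{x}|^2\right)\leq \log 2$ in the $A$-coordinate, whereas you treat $M\overline{B^{-}}$ as a bounded perturbation via the triangle inequality with the uniform constant $C_0$.
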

\begin{proof}
First recall the Siegel fundamental domain $\mathcal{F}_{\Gamma, \tau_0,U_0}$ we fixed in $\left(\ref{sf}\right)$, take $L$ such that $r_{\ell}-\log 2\geq \tau_0$ for all $\ell\geq L$.
Next using $\left(\ref{extract 1}\right)$ we have that for any $\tau\in \bbR$
 \begin{equation}
 \label{so} QA\left(\tau\right)B^{-}\subset QA\left(\tau- \log 2\right)K= NA\left(\tau-\log 2\right)K.
 \end{equation}
Hence for $m\geq L$, $x\in Y_{\mathfrak{D}_m}$ can be written as $x= \Gamma gg_{-\ell}$ for some $m\leq \ell\leq p\left(m\right)$ with $g\in QA\left(r_{\ell}\right)B^{-}\subset NA\left(r_{\ell}-\log 2\right)K$. After left multiplying by some $\gamma\in \Gamma_{\infty}'$ we can assume that $g=ua_tk$ is contained in the Siegel set $U_0 A\left(r_{\ell}-\log 2\right)K$ (we can do this since $U_0$ contains a fundamental domain of $\Gamma_{\infty}'\backslash N$). Since $\ell\geq m\geq L$, $r_{\ell}-\log 2\geq \tau_0$. By $\left(\ref{final 2}\right)$ we have
\begin{displaymath}
\textrm{dist}\left(o, xg_{\ell}\right)= \textrm{dist}\left(o, ua_tk\right)= t+ O\left(1\right)\geq r_{\ell}-\log 2+O\left(1\right)= r_{\ell}+ O\left(1\right).\qedhere
\end{displaymath}
\end{proof}

The next lemma shows that there exists a nice set sitting inside $\mathfrak{D}_m$. We first identify $Q\backslash G$ with $A\times M\backslash K$. Let $\textrm{Pr}: Q\backslash G\to M\backslash K$ be the natural projection map and
$$K\left(\ell\right):= \textrm{Pr}\left(Q\backslash QA\left(r_{\ell}\right)B^{-}g_{-\ell}\right)$$
be the $K$-part of $Q\backslash QA\left(r_{\ell}\right)B^{-}g_{-\ell}$. We note that $K\left(\ell\right)$ is independent of $r_{\ell}$, that is, $K\left(\ell\right)=\textrm{Pr}\left(Q\backslash QA\left(\tau\right)B^{-}g_{-\ell}\right)$ for any $\tau\in \bbR$.

\begin{Lem}
\label{lem 3} For any $\ell\geq 1$, let $\tau_{\ell}= r_{\ell}-2\log\left(\ell\right)+\log 2$. Then
$$A\left(\tau_{\ell}\right)\times K\left(\ell\right)\subset Q\backslash QA\left(r_{\ell}\right)B^{-}g_{-\ell}\quad and\quad |A\left(\tau_{\ell}\right)\times K\left(\ell\right)|\asymp |Q\backslash QA\left(r_{\ell}\right)B^{-}g_{-\ell}|$$
with the implicit constant independent of $\ell$.
\end{Lem}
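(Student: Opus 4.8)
The plan is to carry out everything in the $AN^-$-coordinates on the conull open set $\mathcal{U}:=Q\backslash QAN^-\subset Q\backslash G$ that already appeared in the proof of Lemma~\ref{lem 2}, where both sides of the asserted equivalence become explicit regions that are trivial to integrate over. Recall from the proof of Lemma~\ref{lem 2} that $(s,\textbf{x})\mapsto Qa_su_{\textbf{x}}^-$ is a bijection of $\bbR\times\mathbb{V}^{n-1}$ onto $\mathcal{U}$ (injectivity follows from $AN^-\cap Q=\{I_2\}$) under which the right $G$-invariant measure on $Q\backslash G$ is, up to a fixed positive constant, $e^{-ns}\,ds\,d\textbf{x}$. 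Since $N^-$ is abelian and $g_{-\ell}=u_{-\ell}^-$, we have $u_{\textbf{x}}^-g_{-\ell}=u_{\textbf{x}-\ell}^-$, so substituting $\textbf{y}=\textbf{x}-\ell$ gives the clean description $Q\backslash QA(r_\ell)B^-g_{-\ell}=\{\,Qa_su_{\textbf{y}}^-\ :\ s\geq r_\ell,\ |\textbf{y}+\ell|<\tfrac12\,\}$.

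Next I would compute the Iwasawa data of $a_su_{\textbf{y}}^-=\bigl(\begin{smallmatrix}e^{s/2}&0\\ e^{-s/2}\textbf{y}&e^{-s/2}\end{smallmatrix}\bigr)$ using $(\ref{extract 1})$ and $(\ref{extract 2})$: its $A$-component is $a_t$ with $t=s-\log(1+|\textbf{y}|^2)$, and its $M\backslash K$-component $\kappa(\textbf{y}):=\textrm{Pr}(Qu_{\textbf{y}}^-)$ depends only on $\textbf{y}$ (it is the inverse stereographic projection of $\textbf{y}$, so $\kappa$ is injective). Hence $K(\ell)=\{\kappa(\textbf{y}):|\textbf{y}+\ell|<\tfrac12\}$, recovering its independence of $r_\ell$ in passing, and in the same coordinates $A(\tau_\ell)\times K(\ell)=\{\,Qa_su_{\textbf{y}}^-\ :\ |\textbf{y}+\ell|<\tfrac12,\ s\geq\tau_\ell+\log(1+|\textbf{y}|^2)\,\}$.

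With these two descriptions the inclusion reduces to checking $\tau_\ell+\log(1+|\textbf{y}|^2)\geq r_\ell$ whenever $|\textbf{y}+\ell|<\tfrac12$, i.e.\ to $1+|\textbf{y}|^2\geq\ell^2/2$; since $|\textbf{y}|>\ell-\tfrac12$ this follows from $1+(\ell-\tfrac12)^2=\ell^2-\ell+\tfrac54\geq\ell^2/2$, which holds for every $\ell$ because $2\ell^2-4\ell+5>0$. For the measure comparison, integrating $e^{-ns}\,ds\,d\textbf{y}$ over the two regions above gives $|Q\backslash QA(r_\ell)B^-g_{-\ell}|\asymp e^{-nr_\ell}$ (the $\textbf{y}$-integral being over the fixed ball $B^-$) and $|A(\tau_\ell)\times K(\ell)|\asymp e^{-n\tau_\ell}\int_{|\textbf{y}+\ell|<1/2}(1+|\textbf{y}|^2)^{-n}\,d\textbf{y}$; on that ball $1+|\textbf{y}|^2\asymp\ell^2$ with absolute constants (for $\ell\geq1$ one has $\ell^2/4\leq 1+|\textbf{y}|^2\leq 13\ell^2/4$), so the integral is $\asymp_{n}\ell^{-2n}$, and combined with $e^{-n\tau_\ell}=2^{-n}\ell^{2n}e^{-nr_\ell}$ this yields $|A(\tau_\ell)\times K(\ell)|\asymp_{n} e^{-nr_\ell}$. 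Comparing the two estimates completes the proof, with implied constants depending only on $n$ and in particular independent of $\ell$.

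I expect the only genuinely delicate step to be the coordinate computation in the second paragraph — correctly extracting the $A$- and $M\backslash K$-components of $a_su_{\textbf{y}}^-$, especially the factor $\log(1+|\textbf{y}|^2)$ produced by $(\ref{extract 1})$ — since both the precise value $\tau_\ell=r_\ell-2\log\ell+\log2$ and the exponent $\ell^{-2n}$ in the measure bound are dictated by it. Everything else is elementary estimation on the ball $B(-\ell,\tfrac12)$ in $\mathbb{V}^{n-1}$.
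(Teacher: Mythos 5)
Your proof is correct and rests on exactly the same two ingredients as the paper's: the relation $t=s-\log\left(1+|\textbf{y}|^2\right)$ coming from $\left(\ref{extract 1}\right)$, and the elementary inequality $1+\left(\ell-\frac12\right)^2\geq \ell^2/2$. The only difference is presentational — you integrate both sets directly in $AN^{-}$-coordinates, whereas the paper sandwiches $Q\backslash QA\left(r_{\ell}\right)B^{-}g_{-\ell}$ between the product sets $A\left(t_{\ell,\pm\frac12}\right)\times K\left(\ell\right)$ via the fiberwise infimum $t\left(k\right)$ and compares the measures of those products — so the two arguments are essentially the same.
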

\begin{proof}
For each $k\in K\left(\ell\right)$, define
$$I\left(k\right):= \{t\in \bbR\ |\ Qa_{t}k\in Q\backslash QA\left(r_{\ell}\right)B^{-}g_{-\ell}\}$$
and
$$t\left(k\right):= \inf I\left(k\right).$$ We first note that if $t\in I\left(k\right)$ (that is, $a_{t}k= qa_{t_0}u_{\textbf{x}-\ell}^-$ for some $q\in Q$, $t_0\geq r_{\ell}$ and $|\textbf{x}|< \frac12$), then $[t,\infty)\subset I\left(k\right)$. This is because for any $t'> t$ we have
$$a_{t'}k= a_{t'-t}a_{t}k=a_{t'-t}qa_{t_0}u_{\textbf{x}-\ell}^-= q'a_{t'-t+ t_0}u_{\textbf{x}-\ell}^-,$$
and $t'-t+ t_0> t_0\geq r_{\ell}$. Here $q'= a_{t'-t}qa_{t-t'}\in Q$. This implies that
\begin{equation}
\label{estimate set} Q\backslash QA\left(r_{\ell}\right)B^{-}g_{-\ell}=\bigcup_{k\in K\left(\ell\right)}A\left(t\left(k\right)\right)\times \{k\}.
\end{equation}
Moreover, by  $\left(\ref{extract 1}\right)$, the relation $a_{t}k= qa_{t_0}u_{\textbf{x}-\ell}^-$ implies
\begin{equation}
\label{relation} t=t_0-\log\left(1+ |\textbf{x}-\ell|^2\right).
\end{equation}
In particular, the minimality of $t\left(k\right)$ implies that
$$t\left(k\right)= r_{\ell}-\log\left( 1+ |\textbf{x}-\ell|^2\right)$$
for some $\textbf{x}$ (determined by $k$). As $k$ ranges over $K(\ell)$ (that is, $\textbf{x}$ ranges over $B^{-}$), $t\left(k\right)$ attains the maximal value when $\textbf{x}=\frac12$ and the minimal value when $\textbf{x}= -\frac12$. Let $t_{\ell,\pm \frac12}= r_{\ell}-\log\left( 1+ |\ell\mp \frac12|^2\right)$, then in view of $\left(\ref{estimate set}\right)$ we have
$$A\left(t_{\ell,\frac12}\right)\times K\left(\ell\right)\subset Q\backslash QA\left(r_{\ell}\right)B^{-}g_{-\ell}\subset A\left(t_{\ell,-\frac12}\right)\times K\left(\ell\right).$$
Next, note that $e^{-nt_{\ell,\frac12}}\asymp e^{-nt_{\ell, -\frac12}}\asymp e^{-nr_{\ell}}\ell^{2n}$, hence
$$\left|A\left(t_{\ell,\frac12}\right)\times K\left(\ell\right)\right|\asymp \left|Q\backslash QA\left(r_{\ell}\right)B^{-}g_{-\ell}\right|\asymp \left|A\left(t_{\ell,-\frac12}\right)\times K\left(\ell\right)\right|.$$
Finally, note that $t_{\ell, \frac12}\leq \tau_{\ell}$ and $e^{-nt_{\ell,\frac12}}\asymp e^{-n\tau_{\ell}}$, hence \begin{displaymath}
A\left(\tau_{\ell}\right)\times K\left(\ell\right)\subset Q\backslash QA\left(r_{\ell}\right)B^{-}g_{-\ell}\quad \textrm{and}\quad |A\left(\tau_{\ell}\right)\times K\left(\ell\right)|\asymp |Q\backslash QA\left(r_{\ell}\right)B^{-}g_{-\ell}|.\qedhere
\end{displaymath}
\end{proof}
\begin{Rem}
\label{rem} Later we will take $r_{\ell}=\frac{1-\epsilon}{n}\log \ell$ with $\epsilon$ some fixed small positive number. We note that in this case we can take $p\left(m\right)=2m$. Moreover, since $\tau_{m}\geq \tau_{\ell}$ and $e^{-n\tau_{m}}\asymp e^{-n\tau_{\ell}}\asymp m^{(2n-1+\epsilon)}$ for all $m\leq \ell\leq 2m$, in view of Lemma \ref{lem 4} we have
$$A\left(\tau_{m}\right)\times K_m\subset \mathfrak{D}_m\quad\textrm{and}\quad |A\left(\tau_{m}\right)\times K_m|\asymp |\mathfrak{D}_m|,$$
where $K_m:=\cup_{\ell=m}^{2m}K\left(\ell\right)$.
\end{Rem}

\subsection{Proof of Theorem \ref{main thm}.}
Now we can give the proof of logarithm laws.
\subsection*{Upper bound}
Fix $\epsilon> 0$ and let $r_{\ell}= \frac{1+\epsilon}{n}\log\left(\ell\right)$. By $\left(\ref{norme}\right)$ the sets
$$\{x\in\Gamma\backslash G\ |\ xg_{\ell}\in B_{r_{\ell}}\}= B_{r_{\ell}}g_{-\ell}$$ satisfy
$$\sum_{\ell=1}^{\infty}\sigma\left(B_{r_{\ell}}g_{-\ell}\right)=\sum_{\ell=1}^{\infty}\sigma\left(B_{r_{\ell}}\right)\asymp \sum_{\ell=1}^{\infty}\frac{1}{\ell^{1+\epsilon}}< \infty.$$
Hence by Borel-Cantelli lemma the set
$$\{x\in\Gamma\backslash G\ |\ xg_{\ell}\in B_{r_{\ell}}\ \textrm{for finitely many $\ell$}\}$$
has full measure. This implies that
$$\limsup_{\ell\to\infty}\frac{\textrm{dist}\left(o,xg_{\ell}\right)}{\log \ell}\leq \frac{1+ \epsilon}{n}$$
for $\sigma$-a.e. $x\in \Gamma\backslash G$.
Moreover, for all $t\in \mathbb{R}$ let $\ell=\floor{t}$, we have
$$|\textrm{dist}\left(o, xg_t\right)-\textrm{dist}\left(o, xg_{\ell}\right)|\leq \textrm{dist}\left(xg_t, xg_{\ell}\right)\leq \textrm{dist}_G\left(e, g_{\ell-t}\right)= O\left(1\right),$$
hence we can replace the discrete limit over $\ell\in \mathbb{N}$ with a continuous limit over $t\in \bbR$. Finally, letting $\epsilon\to 0$ we get
$$\limsup_{t\to \infty}\frac{\textrm{dist}\left(o, xg_t\right)}{\log t}\leq 1$$
for $\sigma$-a.e. $x\in \Gamma\backslash G$.

\subsection*{Lower bound} Fix $\epsilon> 0$ and let $r_{\ell}= \frac{1-\epsilon}{n}\log \ell$. Let $\mathfrak{D}_m$ and $Y_{\mathfrak{D}_m}$ be as above. Note that in this case, for the definition of $\mathfrak{D}_m$ we can take $p\left(m\right)=2m$. We first prove the following

\begin{Lem}
\label{lem 4} There is a constant $\kappa_{\Gamma}>0$ depending only on $\Gamma$ such that $\sigma\left(Y_{\mathfrak{D}_m}\right)\geq \kappa_{\Gamma}$ for all $m\geq 1$.
\end{Lem}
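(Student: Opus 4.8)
The plan is to produce inside $\mathfrak{D}_m$ a concrete smooth test function of product shape, bound the $L^2$-norm of its incomplete Eisenstein series by Theorem~\ref{thm 2}, and convert this into a lower bound for $\sigma(Y_{\mathfrak{D}_m})$ via the unfolding identity of Lemma~\ref{unfold} together with Cauchy--Schwarz. First I would pass from $\mathfrak{D}_m$ to the product set $\mathfrak{D}_m':=A(\tau_m)\times K_m$: by Lemma~\ref{lem 3} and Remark~\ref{rem} one has $\mathfrak{D}_m'\subset\mathfrak{D}_m$ and $|\mathfrak{D}_m'|\asymp|\mathfrak{D}_m|$, while $|\mathfrak{D}_m|\gtrsim 1$ uniformly in $m$ by Lemma~\ref{lem 2}. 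Since $\mathfrak{D}_m'\subset\mathfrak{D}_m$ forces $Y_{\mathfrak{D}_m'}\subset Y_{\mathfrak{D}_m}$, it is enough to bound $\sigma(Y_{\mathfrak{D}_m'})$ from below.

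Next I would fix, once and for all, a smooth nonnegative bump $w$ supported in $[0,1]$ with $w\equiv 1$ on $[\tfrac14,\tfrac34]$, and set $v_m(t):=w(t-\tau_m)$ and $f_m(a_tk):=v_m(t)\,\mathbbm{1}_{K_m}(k)$. Then $f_m\ge 0$ is supported inside $A(\tau_m)\times K_m=\mathfrak{D}_m'$, so $\Theta_{f_m}$ is supported inside $Y_{\mathfrak{D}_m'}$. The substitution $t\mapsto t-\tau_m$ shows that the quotient on the left-hand side of $(\ref{ff})$ equals
\[
\frac{\int_{\bbR}w(u)e^{-su}\,du}{\big(\int_{\bbR}w(u)e^{-nu}\,du\big)^{\frac{2s}{n}-1}\big(\int_{\bbR}w(u)^2e^{-nu}\,du\big)^{1-\frac{s}{n}}},
\]
which is independent of $m$ and, being continuous in $s$, is bounded on $(\tfrac n2,n)$. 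Letting $\lambda$ be such a bound, every $f_m$ lies in the \emph{same} space $\mathscr{A}_\lambda$, so Theorem~\ref{thm 2} applies uniformly and yields $\|\Theta_{f_m}\|_2^2\le C\big(\|f_m\|_1^2+\|f_m\|_2^2\big)$ with $C=C(\Gamma,\lambda)$.

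It remains to match the various quantities. The same scaling gives $\int_{\bbR}v_m(t)e^{-nt}\,dt\asymp e^{-n\tau_m}\asymp\int_{\bbR}v_m(t)^2e^{-nt}\,dt$ with absolute implied constants, hence, by $(\ref{Q})$, $\|f_m\|_1=\big(\int_{\bbR}v_m(t)e^{-nt}\,dt\big)|K_m|\asymp e^{-n\tau_m}|K_m|\asymp|\mathfrak{D}_m'|\gtrsim 1$ and $\|f_m\|_2^2=\big(\int_{\bbR}v_m(t)^2e^{-nt}\,dt\big)|K_m|\asymp\|f_m\|_1$, so that $\|f_m\|_1^2+\|f_m\|_2^2\lesssim\|f_m\|_1^2$. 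On the other hand, Lemma~\ref{unfold} with $F\equiv 1$ --- noting that $f_m$ is left $Q$-invariant, so the fibration $\Gamma_\infty\backslash G\to Q\backslash G$ with fibre of volume $\omega_\Gamma$ together with the normalization $(\ref{haar})$ give $\int_{\Gamma_\infty\backslash G}f_m\,d\sigma=\frac{\omega_\Gamma}{\nu_\Gamma}\int_{Q\backslash G}f_m\,dg$ --- yields $\int_{\Gamma\backslash G}\Theta_{f_m}\,d\sigma=\frac{\omega_\Gamma}{\nu_\Gamma}\|f_m\|_1$. Since $\Theta_{f_m}\ge 0$ is supported in $Y_{\mathfrak{D}_m'}$, Cauchy--Schwarz then gives
\[
\Big(\tfrac{\omega_\Gamma}{\nu_\Gamma}\|f_m\|_1\Big)^2=\Big(\int_{Y_{\mathfrak{D}_m'}}\Theta_{f_m}\,d\sigma\Big)^2\le\sigma(Y_{\mathfrak{D}_m'})\,\|\Theta_{f_m}\|_2^2\lesssim_\Gamma\sigma(Y_{\mathfrak{D}_m'})\,\|f_m\|_1^2 ,
\]
and dividing by $\|f_m\|_1^2>0$ produces $\sigma(Y_{\mathfrak{D}_m})\ge\sigma(Y_{\mathfrak{D}_m'})\ge\kappa_\Gamma$ for a constant $\kappa_\Gamma>0$ depending only on $\Gamma$ (through $\omega_\Gamma,\nu_\Gamma,C$ and the absolute constants above).

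The hard part is to carry out the last two paragraphs simultaneously. On one hand the approximants must all lie in a \emph{single} family $\mathscr{A}_\lambda$, which is why $v_m$ is taken as a rescaled copy of one fixed bump, making the scale-invariant ratio in $(\ref{ff})$ literally independent of $m$. On the other hand the $\|f_m\|_2^2$ term on the right of Theorem~\ref{thm 2} must be dominated by $\|f_m\|_1^2$ and not conversely --- morally this is just the identity $\|\mathbbm{1}_{\mathfrak{D}_m'}\|_2^2=|\mathfrak{D}_m'|\le|\mathfrak{D}_m'|^2=\|\mathbbm{1}_{\mathfrak{D}_m'}\|_1^2$ valid once $|\mathfrak{D}_m'|\gtrsim 1$ --- and it is precisely the product shape $A(\tau_m)\times K_m$ together with the estimate $|\mathfrak{D}_m'|\asymp|\mathfrak{D}_m|\gtrsim 1$ from Lemma~\ref{lem 3} and Remark~\ref{rem} (in particular, that $|K_m|$ is large enough to make $\|f_m\|_1\gtrsim 1$) that guarantees this. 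Once these are in place, the unfolding and Cauchy--Schwarz steps are routine.
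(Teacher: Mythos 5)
Your proof is correct and follows the same overall strategy as the paper's: reduce to a product set $\mathfrak{D}_m'$ inside $\mathfrak{D}_m$ of comparable (and uniformly bounded below) measure, build a smooth product test function lying in a single $\mathscr{A}_\lambda$, invoke Theorem~\ref{thm 2}, and finish with the unfolding identity and Cauchy--Schwarz. Two of your choices are slightly cleaner than the paper's: taking $v_m(t)=w(t-\tau_m)$ for one fixed bump $w$ makes the scale-invariant ratio in $(\ref{ff})$ literally independent of $m$ (the paper instead constructs $v_m$ as a smooth majorant of an indicator $\mathbbm{1}_{[\tau_{2m},T]}$ and verifies approximate quantitative bounds by hand to land in $\mathscr{A}_{15}$), and applying Cauchy--Schwarz directly to $\Theta_{f_m}$ with $f_m$ supported inside $\mathfrak{D}_m'$ avoids the paper's intermediate comparison $\|\Theta_{\mathbbm{1}_{\mathfrak{D}_m'}}\|_2\le\|\Theta_{f_m}\|_2$ against a smooth majorant. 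In both arguments the key point you identified is the same: $|\mathfrak{D}_m'|\asymp|\mathfrak{D}_m|\gtrsim 1$ (Lemma~\ref{lem 2}, Lemma~\ref{lem 3}, Remark~\ref{rem}) is what lets the $\|f_m\|_2^2$ term be absorbed into $\|f_m\|_1^2$.
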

\begin{proof}
Let $\tau_{m}=r_{m}-2\log\left(m\right)+\log 2$ be as above. Let $T$ be a sufficiently large integer such that
\begin{equation}
\label{approx}\frac{1}{2n} e^{-n\tau_{m}}\leq \int_{\tau_{m}}^{T}e^{-nt}dt\leq \frac{2}{n} e^{-n\tau_{m}}\ \textrm{and}\ \frac{1}{2n} e^{-s\tau_{m}}\leq \int_{\tau_{m}}^{T}e^{-st}dt\leq \frac{4}{n}e^{-s\tau_{m}}
\end{equation}
for any $s\in \left(\frac{n}{2},n\right)$ and $m\geq 1$. We identify the subgroup $A$ with $\bbR$ by sending $a_t$ to $t$. For every $m\geq 1$ define the set
$$\mathfrak{D}_m':=[\tau_{m}, T]\times K_m,$$
where $K_m=\cup_{\ell=m}^{2m}K\left(\ell\right)$.
By Remark $\ref{rem}$ we have $\mathfrak{D}_m'\subset \mathfrak{D}_m$ and $|\mathfrak{D}_m'|\asymp |\mathfrak{D}_m|$. In particular,  by Lemma \ref{lem 2}, $|\mathfrak{D}_m'|\gtrsim 1$ are uniformly bounded from below for all $m\geq 1$. Using the same unfolding trick as we did in Lemma \ref{unfold}, one has the following identity
$$\int_{\Gamma\backslash G}\Theta_{\mathbbm{1}_{\mathfrak{D}_m'}}\left(g\right)d\sigma\left(g\right)= \frac{\omega_{\Gamma}}{\nu_{\Gamma}}\int_{Q\backslash G}\mathbbm{1}_{\mathfrak{D}_m'}\left(g\right)dg$$
where $\omega_{\Gamma}=\int_{\Gamma_{\infty}\backslash Q}dq$ and $\nu_{\Gamma}=\int_{\Gamma\backslash G}dg$.
By Cauchy-Schwartz and the fact that $\Theta_{\mathbbm{1}_{\mathfrak{D}_m'}}$ is supported on $Y_{\mathfrak{D}_m'}$, we have
\begin{align*}
\left(\frac{\omega_{\Gamma}}{\nu_{\Gamma}}\right)^2|\mathfrak{D}_m'|^2&= \left(\int_{Y_{\mathfrak{D}_m'}}\Theta_{\mathbbm{1}_{\mathfrak{D}_m'}}\left(g\right)d\sigma\left(g\right)\right)^2\\
&\leq \sigma\left(Y_{\mathfrak{D}_m'}\right)||\Theta_{\mathbbm{1}_{\mathfrak{D}_m'}}||_2^2.
\end{align*}
Now in view of $\left(\ref{approx}\right)$ we can take $f_m=v_m\mathbbm{1}_{K_m}$ with $\mathbbm{1}_{K_m}$ the characteristic function of $K_m$ and $v_m$ approximating $\mathbbm{1}_{[\tau_{2m}, T]}$ from above sufficiently well such that
\begin{enumerate}
\item[(1)] $v_m$ is smooth, compactly supported and takes values in $[0,1]$;
\item[(2)] $\frac{1}{3n}\leq \frac{\int v_m\left(t\right)e^{-nt}dt}{e^{-n\tau_{m}}}, \frac{\int v_m^2\left(t\right)e^{-nt}dt}{e^{-n\tau_{m}}}\leq \frac{3}{n}$ and $\frac{1}{3n}\leq \frac{\int v_m\left(t\right)e^{-st}dt}{e^{-s\tau_{m}}}\leq \frac{5}{n}$ for any $s\in \left(\frac{n}{2},n\right)$;
\item[(3)] $||f_m||_1\leq 2|\mathfrak{D}_m'|$.
\end{enumerate}
In particular, for any $s\in\left(\frac{n}{2},n\right)$
$$\frac{\int v_m\left(t\right)e^{-st}dt}{\left(\int v_m\left(t\right)e^{-nt}dt\right)^{\frac{2s}{n}-1}\left(\int v_m^2\left(t\right)e^{-nt}dt\right)^{1-\frac{s}{n}}}\leq \frac{\frac{5}{n}e^{-s\tau_{m}}}{\left(\frac{1}{3n}e^{-n\tau_{m}}\right)^{\frac{2s}{n}-1+1-\frac{s}{n}}}<15.$$
Hence $\{f_m\}\subset \mathscr{A}_{15}$ and by Theorem \ref{thm 2} we can bound
$$||\Theta_{\mathbbm{1}_{\mathfrak{D}_m'}}||_2^2\leq ||\Theta_{f_m}||_2^2\lesssim_{\Gamma} ||f_m||_1^2+||f_m||_2^2.$$
Next, since $||f_m||_2^2\leq ||f_m||_1$, $||f_m||_1\leq 2|\mathfrak{D}_m'|$ and $|\mathfrak{D}_m'|\gtrsim 1$, we can bound
$$||f_m||_1^2+||f_m||_2^2\leq ||f_m||_1^2+ ||f_m||_1\leq 4|\mathfrak{D}_m'|^2+ 2|\mathfrak{D}_m'|\lesssim |\mathfrak{D}_m'|^2.$$
Finally, since  $Y_{\mathfrak{D}_m'}\subset Y_{\mathfrak{D}_m}$, we conclude that there is a constant $\kappa_{\Gamma}>0$ (independent of $m$) such that $\sigma\left(Y_{\mathfrak{D}_m}\right)\geq \sigma\left(Y_{\mathfrak{D}_m'}\right)\geq \kappa_{\Gamma}$ for any $m\geq 1$.
\end{proof}

Now consider the set $\mathcal{B}_{\epsilon}:= \cap_{\ell=L}^{\infty}\cup_{m=\ell}^{\infty}Y_{\mathfrak{D}_m}$, where $L$ is as in Lemma \ref{lem 1}. Then $\sigma\left(\mathcal{B}_{\epsilon}\right)\geq \kappa_{\Gamma}>0$ by Lemma $\ref{lem 4}$. Moreover, by Lemma $\ref{lem 1}$, for any $m\geq L, x\in Y_{\mathfrak{D}_m}$ there is some $\ell\geq m$ such that $\textrm{dist}\left(o, xg_{\ell}\right)\geq r_{\ell}+ O\left(1\right)$. Hence for any $x\in \mathcal{B}_{\epsilon}$ there is a sequence $\ell_m\to \infty$ such that $\textrm{dist}\left(o, xg_{\ell_m}\right)\geq r_{\ell_m}+ O\left(1\right)$. Consequently, we have
$$\mathcal{B}_{\epsilon}\subset \left\{x\in \Gamma\backslash G\ |\ \limsup\limits_{t\to \infty}\frac{\textrm{dist}\left(o, xg_t\right)}{\log t}\geq \frac{1-\epsilon}{n}\right\}.$$
Since the latter set is invariant under the action of $\{g_t\}_{t\in \bbR}$, by ergodicity it has full measure. Letting $\epsilon\to 0$ we get
$$\limsup\limits_{t\to \infty}\frac{\textrm{dist}\left(o, xg_t\right)}{\log t}\geq \frac{1}{n}$$
for $\sigma$-a.e. $x\in \Gamma\backslash G$.

\begin{Rem}
The same argument works for $r_{\ell}=\frac{1}{n}\log\ell+ O(1)$ with taking $p(m)= 2m$. Hence we can show that the sequence of nested cusp neighborhoods $\left\{B_{r_{\ell}}\right\}$ with $\sigma(B_{r_{\ell}})\asymp\frac{1}{\ell}$ is Borel-Cantelli for unipotent flows in this setting.
\end{Rem}

\end{document}